\documentclass[aap]{imsart}

\RequirePackage{amsthm,amsmath,amsfonts,amssymb}
\RequirePackage[numbers,sort&compress]{natbib}
\RequirePackage[colorlinks,citecolor=blue,urlcolor=blue]{hyperref}
\RequirePackage{graphicx}

\usepackage{fix-cm}

\usepackage{latexsym,upgreek}
\usepackage{amsfonts}
\usepackage{mathrsfs}
\usepackage{amsmath}
\usepackage{amsthm}
\usepackage{graphicx} 
\usepackage{amssymb}
\usepackage{stmaryrd}
\usepackage{natbib}
\usepackage{bbm}
\usepackage{commath}
\usepackage{lipsum}
\usepackage{float} 
\usepackage{enumitem}
\usepackage{comment} 
\usepackage{cancel}
 
\startlocaldefs

\newcommand{\rw}{\rightarrow}    
\newcommand{\Real}{\mbR}

\newcommand{\mB}{{\mathcal B}}
\newcommand{\mP}{{\mathcal P}}
\newcommand{\mK}{{\mathcal K}}

\newcommand{\mF}{{\mathcal F}}
\newcommand{\mE}{{\mathcal E}}
\newcommand{\mX}{{\mathcal X}}
\newcommand{\mY}{{\mathcal Y}}
\newcommand{\mN}{{\mathcal N}}
\newcommand{\mM}{{\mathcal M}}

\newcommand{\mO}{{\mathcal O}}

\newcommand{\mbN}{\mathbb{N}}

\newcommand{\mbP}{\mathbb{P}}
\newcommand{\mbR}{\mathbb{R}}
 
\newcommand{\mbE}{\mathbb{E}} 

\newcommand{\sd}{{\sf d}}

\newcommand{\sfd}{{\sf d}}

\newcommand{\beq}{\begin{equation}}
\newcommand{\eeq}{\end{equation}}
\newcommand{\beqa}{\begin{eqnarray}}
\newcommand{\eeqa}{\end{eqnarray}}
\newcommand{\nn}{\nonumber}

\newcommand{\cred}{\textcolor{red}}

\theoremstyle{plain}

\newtheorem{theorem}{Theorem}[section]
\newtheorem{lemma}[theorem]{Lemma}

\theoremstyle{definition}

\newtheorem{proposition}[theorem]{Proposition}
\newtheorem{corollary}[theorem]{Corollary}

\newtheorem{remark}{Remark}
\newtheorem{algorithm}{Algorithm}

\endlocaldefs

\begin{document}

\begin{frontmatter}
\title{
Importance sampling for Bayesian inference: polynomial-dimension dependent error bounds}
\runtitle{Importance sampling for Bayesian inference}

\begin{aug}
\author[A]{\fnms{Fabián}~\snm{González} \thanks{[\textbf{Corresponding author.}]}\ead[label=e1]{omgonzal@math.uc3m.es}\orcid{0000-0003-4353-5737}},
\author[B]{\fnms{Víctor}~\snm{Elvira}\ead[label=e2]{victor.elvira@ed.ac.uk}\orcid{0000-0002-8967-4866}}
\and
\author[A]{\fnms{Joaquín}~\snm{Míguez}\ead[label=e3]{joaquin.miguez@uc3m.es}\orcid{0000-0001-5227-7253}}
\address[A]{Department of Signal Theory and Communications, Universidad Carlos III de Madrid, Spain \printead[presep={ ,\ }]{e1,e3}}
\address[B]{School of Mathematics, University of Edinburgh, United Kingdom \printead[presep={,\ }]{e2}}
\end{aug}

\begin{abstract} Many Bayesian inference problems involve high-dimensional models where the performance of standard importance sampling (IS) methods often degrades rapidly as the dimensionality increases. Classical analyzes of IS typically rely on the assumption that observations are arbitrary but fixed (i.e., deterministic), thereby neglecting the probabilistic structure that the Bayesian model induces on the data. In this paper, we adopt the perspective that observations are themselves random variables whose distribution is governed by the underlying model. Within this probabilistic framework, we identify a model-dependent function, referred to as the \textit{link function}, which connects the
fixed- and random-observation formulations. 
We provide a characterization of the $L^2$ Monte Carlo estimation error: specifically we show that the $L^2$ error bounds are finite and converge at the standard Monte Carlo rate $\mO(N^{-1/2})$, for arbitrarily large dimension, if and only if the link function is Bochner integrable.
 This result reveals the fundamental quantity controlling the error of the approximation and establishes a mechanism to manage the dependence on the model state dimension. Consequently, our approach provides a principled way to alleviate the challenges of high dimensionality, offering insights that transcend the worst-case analyzes dominant in the existing literature. Finally, we derive explicit analytical examples of the dimensional scaling of the associated errors for several model classes, including linear–Gaussian systems and models with bounded observation functions.
\end{abstract}

\begin{keyword}[class=MSC]
\kwd[Primary ]{62F15}
\kwd{60B05}
\kwd{65C05}
\kwd[; secondary ]{46N30}
\end{keyword}

\begin{keyword}
\kwd{Bayesian inference}
\kwd{Bochner integrability}
\kwd{curse of dimensionality}
\kwd{error bounds}
\kwd{importance sampling}
\end{keyword}

\end{frontmatter}


\section{Introduction}\label{sIntroduction}

Bayesian inference is a cornerstone of modern statistical methodology, widely used in fields such as signal theory, image processing, and machine learning. Many problems in these domains are naturally framed in terms of high-dimensional probabilistic models, where solutions are given as integrals. Unfortunately, this integration can only be performed analytically for a limited set of models for which the prior distribution and the likelihood function are specifically selected to yield a closed-form posterior distribution, as it is the case for a handful of conjugate families \cite{Bernardo94}. The obvious alternative is numerical integration \cite{Robert04}.


\subsection{Motivation and background}
 
One widely used approach for handling intractable integrals in Bayesian inference is importance sampling (IS) \cite{Robert04,tokdar2010importance,Bugallo17}. IS is a Monte Carlo integration technique that employs a collection of random samples $X=x^i$, $i=1, \ldots, N$, drawn from a proposal probability measure $\nu$ to approximate integrals with respect to (w.r.t.) a target probability measure $\pi$. Each sample $x^i$ is assigned a weight $w^i$, which is 
\begin{itemize}
\item proportional to the relative density $\frac{\sfd \pi}{\sfd \nu}(x^i)$,
\item and normalised, $\sum_{i=1}^N w^i = 1$.
\end{itemize}
Under mild assumptions, weighted averages can be proved to converge to the corresponding integrals \cite{Robert04,Chopin20}, i.e., 
$$
\sum_{i=1}^N w^i \varphi(x^i) \stackrel{N\rw\infty}{\longrightarrow} \int \varphi(x)\pi(\sfd x),
$$ 
where $\varphi(\cdot)$ is a test function. 

IS is a classic methodology with a plethora of applications \cite{Doucet01b,Robert04,Bucklew04,DelMoral06,Bugallo17}. It provides a flexible and easy-to-use method for approximating expectations w.r.t. complex probability distributions.
Over the past decade, several advanced IS-based methods, \cite{Andrieu10,Chopin12,Crisan18bernoulli,Tran21,doucet2022score,doucet2022annealed,doucet2018sequential,akyildiz2021convergence} have been developed to extend IS and handle inference in increasingly sophisticated models, either static or dynamic. See also \cite{agapiou2017importance,chatterjee2018sample} for a recent account of the fundamentals of IS. 

One problem where IS schemes are often applied is Bayesian inference. In this setting, the target probability measure $\pi$ is the posterior distribution of a random variable (r.v.) $X$ (sometimes termed the state variable, or simply the state) conditional on some r.v. $Y$ that represents the observations available in the system of interest. To be specific, the objective is to determine the conditional probability law of $X$ given a realization $Y=y$. This is formulated via Bayes' theorem:
$$
\pi(\sd x) = \frac{l_y(x)\pi_0(\sd x)}{\int l_y(x')\pi_0(\sd x')} = \frac{l_y(x)\pi_0(\sd x)}{\pi_0(l_y)},
$$
where, $l_y(x)$ denotes the likelihood of the observation $Y=y$ given $X=x$, $\pi_0(\sd x)$ is the prior probability measure of the state $X$, and $\pi_0(l_y)=\int l_y(x')\pi_0(\sd x')$ is the marginal likelihood, or model evidence, which acts as the normalization constant for the posterior distribution $\pi(\sd x)$.

\subsection{Impact of dimension}

Despite its versatility, IS is known to suffer from limitations, most notably the so-called \textit{curse of dimensionality} \cite{agapiou2017importance,chatterjee2018sample,Rebeschini15,Snyder08}. Theoretical and empirical studies suggest that performance can deteriorate significantly when inference involves high-dimensional state spaces  (where the number of scalar r.v.'s or parameters to be inferred is referred to as the dimension of the state space).  
For example, in~\cite{chatterjee2018sample} the authors show that, in general, the number of samples required to approximate the posterior probability law $\pi(\sd x)$ (within a specified accuracy) can scale exponentially with the Kullback--Leibler (KL) divergence between the target and proposal distributions. Also, importance samplers can suffer from weight degeneracy when the dimension of the state space is large \cite{Bengtsson08,Snyder08,Snyder15}. While sophisticated methods have been proposed to overcome this difficulty in various setups \cite{VanLeeuwen13,Beskos14,Koblents15,Rebeschini15,Beskos17,Ruzayqat22}, they are often computationally costly and rely on assumptions which may be hard to assess.

Recent works have begun to characterise the multiple roles of dimension in the analysis of IS algorithms. In particular, the authors of ~\cite{agapiou2017importance} have identified a key quantity governing the efficiency of IS estimators: the second moment of the Radon--Nikodym derivative of the posterior (target) w.r.t. the prior (proposal) measure:
\[\frac{\pi_0(l_y^2)}{\pi_0(l_y)^2}\]
This quantity is further related to important metrics such as the effective sample size (ESS) \cite{Martino17,elvira2022rethinking}, and it also appears in bounds on the mean squared error of IS approximations \cite{akyildiz2021convergence}. It plays a key role in our analysis throughout this paper.

Other works have demonstrated that the degradation of performance in high-dimensional settings can be alleviated under certain conditions. Adaptive and structure-aware variants of IS (such as those found in the adaptive particle filtering literature~\cite{VanLeeuwen13,Rebeschini15,Beskos17,Ruzayqat22,kuntz2024divide} and in sequential Monte Carlo samplers~\cite{Beskos14}) exploit problem-specific geometric features or sequential data structure to maintain accuracy in moderately high dimensional problems.


\subsection{Contributions}

In this paper, we investigate the role of the dimension of the state $X$ (denoted $d_x$) in the
error bounds achieved by IS methods for Bayesian inference. Unlike much of the previous literature, which assumes fixed observations, we treat the data as r.v.s generated by the model. This approach has been employed in \cite{mcdonald2020exponential} for the study of stability of the optimal filter \cite{crisan2020stable}. However, to the best of our knowledge, it has not been applied to IS schemes or Monte Carlo approximations.
This natural probabilistic setting allows us to define random posterior measures and, correspondingly, random error bounds, enabling the use of functional-analytic and measure-theoretic tools to exploit the model structure.
We first derive $L^1$ and $L^2$ error bounds for general Bayesian models under random observations and explicitly identify their dependence on the dimension of the model. A central theoretical contribution of the paper is a characterization of
the $L^2$ Monte Carlo error of the IS estimator in this setting. We show that the canonical
$\mathcal{O}(N^{-1/2})$ convergence rate holds if and only if a certain
model-dependent function (refereed as the \textit{link function}) introduced in the analysis is square-integrable
w.r.t. the joint law of the observations and the prior
(formalized using Bochner integrability). This integrability
condition emerges as the fundamental quantity governing the error of
the approximation. In particular, it provides a direct mechanism to
control the dependence of the error bounds on the state dimension $d_x$. 
Our analysis relies on the notion of Bochner integrability from functional analysis \cite{tuomas2016analysis}, which, to the best of our knowledge, has not been previously used for the study of IS estimators.
 
We then focus on the dimensional scaling of these errors. For several classes of models, including linear/Gaussian systems, models with pointwise-bounded likelihoods and models with bounded observation functions. 
These examples are not artificial or contrived cases, but practically relevant models that arise naturally in a wide range of problems. For instance, sensors that measure physical magnitudes (pressure, velocity, voltage, etc.) typically have a limited range of operation, which translates into bounded observation functions when modelled mathematically. Consequently, these examples are representative of a broad class of settings to which our theoretical findings apply.

To derive these results, we first bridge the gap between abstract functional-analytic conditions and practical Bayesian modeling by establishing a concrete, easily verifiable criterion for the integrability of the link function. Specifically, for likelihoods belonging to elliptically symmetric  families \cite{delmas2024elliptically} (which encompass diverse noise models such as Gaussian, Laplace, Student-$t$ and Cauchy distributions) we prove that the high-dimensional Bochner integrability condition reduces to the finiteness of a simple, one-dimensional radial integral for models with bounded observations functions.

This reduction allows us to provide explicit analytical constructions showing that approximation errors grow at most polynomially with the state dimension $d_x$. Notably, we identify cases where the polynomial degree is zero, implying error bounds that hold uniformly over $d_x$. These findings demonstrate that the exponential complexity typically associated with the \textit{curse of dimensionality} can be mitigated in some practically relevant settings, such as those involving sensors with limited operational ranges.

This framework offers a straightforward diagnostic for practitioners: if the observation map is bounded and the radial profile of the noise satisfies a manageable tail-growth condition, polynomial or even uniform error scaling is guaranteed. By reducing the complex stability analysis of IS to the evaluation of a univariate integral, we provide a systematic method to verify the robustness of inference schemes across the various noise structures specified in Remark \ref{rem:Colour_noises}.


\subsection{Outline of the Paper}
The paper is organized as follows. Section~\ref{sec:sIS2} formally describes the probabilistic model and the classical importance sampling algorithm. In Section~\ref{sec:General_Random}, we analyze the error bounds attained for general models with random observations. The general results obtained in that framework are then particularised to some practically relevant examples in Section~\ref{sec:Special_cases}. Section \ref{sec:Comparison}, is devoted to a comparison of the results obtained in Sections \ref{sec:General_Random} and \ref{sec:Special_cases} with recent worst case analyzes of IS.
Finally, in Section~\ref{sec:Discussion}, we summarise our main findings and discuss possible venues for future research. Technical proofs and some additional results are provided in the appendices. We conclude this introduction with a brief summary of the notation used throughout the paper.


\subsection{Summary of notation} \label{ssNotation}
\begin{itemize}
    \item Sets, measures and integrals:
    \begin{itemize}
        \item[-] $\mB (S)$ is the $\sigma$-algebra of Borel subsets of $S \subseteq  \mbR^{d}$.
\item[-] $\mP(S) := \{ \nu  : \mB (S) \mapsto  [0, 1]$ and $\nu (S) = 1\}$  is the set of probability measures over
$\mB (S)$.
\item[-] $ \nu (f) := 
\int f(s) \nu(\sd s)$  is the integral of a Borel measurable function $f : S \mapsto  \mbR$ w.r.t. the
measure $\nu  \in  \mP (S)$.

\end{itemize}
\item Functions:
    \begin{itemize}
    
    \item [-] Consider the measurable spaces \((S, \mathcal{B}(S))\) and \((\mbR, \mathcal{B}(\mbR))\).  
We denote by \(B(S)\) the space of bounded, real-valued, measurable functions \(f : S \mapsto \mbR\).  
For any \(f \in B(S)\), the uniform norm is defined as
\[
\|f\|_{\infty} := \sup_{s \in S} |f(s)| < \infty.
\]
    \end{itemize}
    
\item Real r.v.s on a probability space $(\Omega,\mF,\mbP)$ are denoted by capital letters (e.g., $Z:\Omega \mapsto \mbR^{d})$, while their realisations are written as lowercase letters (e.g., $Z(\omega)=z$, or simply, $Z=z$). If $X$ is a $d_x$ multivariate Gaussian r.v., then its probability law is denoted $\mN(\sd x;\mu, \Sigma)$, and it associated pdf by $\mN( x;\mu, \Sigma)$ where $\mu$ is the mean and $\Sigma$ is the covariance matrix. If the r.v. $X$ has probability law $\pi$ and $f$ is a $\pi$-integrable function, then we denote $$\pi(f) = \int f(x) \pi(\sd x) = \mbE[f(X)],$$
where $\mbE[\cdot]$ is the expectation operator.

\item Linear algebra:
    \begin{itemize}
        \item Let $A \in \mbR^{n \times n}$ be a real symmetric matrix. The spectrum of $A$ is denoted as 
\[
\mathrm{spec}(A) := \{ \lambda \in \mbR \, : \, \exists\, v \in \mbR^n \setminus \{0\} \text{ such that } Av = \lambda v \}.
\]
\item We denote the real eigenvalues of $A$ as
$\lambda_1(A) \geq \lambda_2(A) \geq \cdots \geq \lambda_n(A),$
where $\lambda_1(A)$ and $\lambda_n(A)$ denote the maximum and minimum eigenvalues of $A$, respectively.
   \end{itemize}
   
\item Functional analysis:
    \begin{itemize}
        \item For $1\leq p < \infty$, and $\nu \in \mP(S),$ we denote by $
L^p(\nu)$ the Banach space defined by  $$
L^p(\nu)= \left\{\,(f\colon S \mapsto \mbR) : \|f\|_{L^p(\nu)} := \left(\int_S |f(s)|^p\, \nu(\sd s)\right)^{1/p} < \infty \right\}.$$
        \item In particular, for $p=2$, the inner product in the Hilbert space $L^2(\nu)$ is denoted by $$\langle \varphi, \psi \rangle_{L^2(\nu)}:=\int_S\varphi(s) \psi(s)  \nu(\sd s).$$
        \item For $p=\infty$, we define
\[
L^\infty(\nu)
:=
\left\{(f:S\to\mbR) \;:\;
\|f\|_{L^\infty(\nu)}
:=
\operatorname{ess\,sup}_\nu |f|
<\infty
\right\},
\]
where 
\[
\operatorname{ess\,sup}_\nu |f|
:=
\inf\Big\{
M\in\mbR \;:\;
\nu(\{s\in S:\,|f(s)|>M\})=0
\Big\}.
\] Intuitively, $\operatorname{ess\,sup}_\nu |f|$ is the smallest $M$
such that $|f(s)|\le M$ for $\nu$-almost every $s\in S$.

    \end{itemize}

\end{itemize}

\section{Importance samplers} \label{sec:sIS2}


\subsection{Model} \label{ssec:BI}
Let $(\Omega,\Sigma,\mbP)$ be a probability space and let
$X:\Omega\mapsto\mX \subseteq \Real^{d_x}$ and
$Y:\Omega\mapsto\mY \subseteq \Real^{d_y}$
denote two multidimensional r.v.s. To be specific, the r.v. $X$ represents the
$d_x$-dimensional signal of interest (or state), which we aim to estimate, and
$Y$ represents some $d_y$-dimensional observed data.

The complete model can be specified as follows:
\begin{itemize}
\item The prior probability law of the state of interest $X$ is denoted by
$\pi_0(\mathrm{d}x)$, and we assume the ability to generate random samples from
this distribution.

\item Given $X = x$, the observation $Y$ admits a conditional probability
density function (pdf) w.r.t. the Lebesgue measure, denoted by $g(y \mid x)$.
\end{itemize}

We introduce the notation \( g_y^{d_x}(x) := g(y \mid x) \) and define an
associated likelihood function 
\beq\label{eq_ly}
l_y^{d_x}(x) := \mathtt{c}\, g_y^{d_x}(x),
\eeq
where \( \mathtt{c} \in \mbR^+ \) is a positive (possibly unknown) constant,
independent of \( x \).
The superscript $d_x$ explicitly denotes the dependence of the
conditional pdf $g(y \mid x)$ on the dimension of $X$. Similar superscript notation is used for other dimension-dependent objects
throughout this paper. \cred{}

We refer to a pair $\mathcal{M}^{d_x} = (\pi_0, g^{d_x})$ as a model for Bayesian inference. Throughout this work, we consider a family of models $\{\mathcal{M}^{d_x}\}_{d_x \in \mathbb{N}}$, indexed by the dimension $d_x$ of the state variable $X$. Intuitively, the models in the family $\{\mathcal{M}^{d_x}\}_{d_x \in \mathbb{N}}$ may be expected to be ``of the same type'', i.e., similar in structure and with differences due mainly to the increasing dimension $d_x$. An explicit example (of a family of linear and Gaussian models) is given in Section~\ref{sec:Linear_Gaussian}. Nevertheless, the conditions required by the main theorems in Sections \ref{sec:General_Random} and \ref{sec:Special_cases} actually allow for families where the individual models can be rather different. The dimension of the observation space, $d_y$, is assumed to be fixed unless stated otherwise.

Usually, the aim is to approximate the posterior law of the state $X$ conditional on the observation $Y=y$, denoted $\pi_y(\sd x)$, using IS. This posterior probability measure can be written as
\beq
\pi_y(\sd x) =\frac{
	g_y^{d_x}(x) \pi_0(\sd x)
}{
	\pi_0(g_y^{d_x})
}= \frac{
	l_y^{d_x}(x) \pi_0(\sd x)
}{
	\pi_0(l_y^{d_x})
},
\label{eqDef_piy}
\eeq
where 
\beq
\pi_0(g_y^{d_x}):=\int g_y^{d_x}(x) \pi_0(\sd x)
\nn
\eeq
is the normalization constant of the posterior distribution for a fixed
observation $y\in\mY$, and coincides with the pdf of
the r.v. $Y$ when $y\in\mY$ is regarded as the independent variable of the function $y \leadsto \pi_0(g_y^{d_x})$.

We remark that $\pi_0(g_y^{d_x})$ is analytically intractable in general. One important exception is the case of linear Gaussian models, which we explicitly discuss in Section~\ref{sec:Linear_Gaussian}.

We assume that $g_y^{d_x}(x) \in L^2(\pi_0)$, $\forall\; y\in\mY$, i.e.,
\begin{equation}\label{eq:normL2}
\| g_y^{d_x} \|^2_{L^2(\pi_0)} :=
\int_{\mX} g_y^{d_x}(x)^2\, \pi_0(\sd x) < \infty.
\end{equation}
This condition holds if and only if the normalized importance sampling weights have finite variance under the proposal distribution $\pi_0$ (see Appendix \ref{app:Z_ratio}). In particular, it is trivially satisfied whenever $g_y^{d_x}(x)$ is uniformly bounded for all
$x\in\mX, \, y \in \mY$.

The approximation of the posterior law $\pi_y(\sd x)$ for a given (fixed)
observation $Y=y$ can be carried out using a simple IS scheme as shown in
Section~\ref{Algorithm}.


\subsection{Importance sampler}\label{Algorithm}

Assume a model $\mM^{d_x}$ as described in Section~\ref{ssec:BI}. To describe a general IS scheme for the approximation of $\pi_y(\sd x)$, let us introduce an importance probability measure $\nu$ such that  $\pi_0$ is absolutely continuous w.r.t. $\nu$ (denoted $\pi_0 \ll \nu$).  
In this setting, we let
\[
\rho(x) := \frac{\sd \pi_0}{\sd \nu}(x), \qquad x \in \mX,
\]
denote the Radon--Nikodym derivative of $\pi_0$ w.r.t. $\nu$. A general IS procedure, with proposal $\nu$, is outlined in Algorithm \ref{A1} below.

\begin{algorithm}\label{A1}
A general importance sampler
\begin{enumerate}
\item Draw $N$ iid samples $x^1, \ldots, x^N$ with common probability law
$\nu$.
\item Compute the likelihood values $l_y^{d_x}(x^i)$ for
$i=1,\ldots,N$.
\item Compute normalised importance weights
$w_\nu^i = \frac{ l_y^{d_x}(x^i)\rho(x^i) }{ \sum_{j=1}^N l_y^{d_x}(x^j)\rho(x^j) }$, $i=1, ..., N$.
\end{enumerate}
\end{algorithm}

The samples $x^1, \ldots, x^N$ and their importance weights yield a Monte Carlo
estimator of the posterior measure $\pi_y(\sd x)$, namely
\beq
\pi_{y,\nu}^{N}(\sd x) := \sum_{i=1}^N w_\nu^i \delta_{x^i}(\sd x),
\nn
\eeq
where $\delta_{x^i}(\sd x)$ is the Dirac delta measure located at $x^i$.

For clarity, we carry out our analysis for a simpler version of this general
algorithm where the prior is used as the proposal, i.e. $\nu=\pi_0$. The resulting scheme is displayed as
Algorithm~\ref{A2}. Hereafter, we refer to this specific procedure as standard
IS.

\begin{algorithm}\label{A2}
Importance sampler
\begin{enumerate}
\item Draw $N$ iid samples $x^1, \ldots, x^N$ from the prior law $\pi_0$.
\item Compute the likelihood values $l_y^{d_x}(x^i)$ for
$i=1,\ldots,N$.
\item Compute normalised importance weights
$w^i = \frac{l_y^{d_x}(x^i)}{\sum_{j=1}^N l_y^{d_x}(x^j)}$.
\end{enumerate}
\end{algorithm}

The standard importance sampler outputs the random probability measure
\beq
\pi_y^{N}(\sd x) = \sum_{i=1}^N w^i \delta_{x^i}(\sd x).
\label{eqDef_piyN}
\eeq
This measure can be easily used to approximate posterior expectations of $X$, i.e., integrals w.r.t. $\pi_y(\sd x)$. For a given real test function
$f:\mX \mapsto \Real$, let us denote
\beq
\pi_y(f) := \int_\mX f(x)\pi_y(\sd x)
=
\mbE[f(X) \mid Y=y],
\nn
\eeq
i.e., $\pi_y(f)$ is the expected value of the r.v. $f(X)$ conditional on $Y=y$.
We can naturally approximate
\beq
\pi_y(f) \approx \pi_y^{N}(f)
= \sum_{i=1}^N f(x^i) w^i,
\nn
\eeq
and then analyze the random errors $\pi_y(f)-\pi_y^{N}(f)$.

\begin{remark}
There is no loss of generality in analysing the Algorithm~\ref{A2} instead of the general importance sampler outlined in Algorithm~\ref{A1}. As discussed in detail in Appendix~\ref{sec:importance_sampling}, any general importance sampler with proposal \(\nu\) for a model $\mM^{d_x}=(\pi_0,g^{d_x})$ can be interpreted as a standard algorithm for a modified model \(\widetilde{\mM}^{d_x}=(\nu,\widetilde g^{d_x} )\), with prior \(\nu\) and likelihood
\[
\widetilde g_y^{d_x}(x) := g_y^{d_x}(x)\, \frac{\sd \pi_0}{\sd \nu}(x).
\]  
Consequently, it suffices to analyze the performance of Algorithm~\ref{A2} applied to \(\widetilde{\mM}^{d_x}\). 
\end{remark}

\begin{remark}
    It is pertinent to observe that the likelihood function $l_y^{d_x}(x) \propto g_y^{d_x}(x)$ serves as the foundation for the algorithmic approximation $\pi_y^{N}$ of the posterior measure $\pi_y$. While this likelihood appears pervasively throughout the subsequent analysis, the density $g_y^{d_x}(x)$ remains the primary object of interest. Notably, since the IS approximation $\pi_y^N$ is constructed via self-normalized weights, the resulting measure is invariant to the choice of the proportionality constant in $l_y^{d_x}(x)$.
\end{remark}


\subsection{Error bounds}\label{sec:EB}

We aim to obtain an upper bound for the  approximation error
\beq\label{eq:norm_Exp}
\| \pi_y(f) - \pi_y^{N}(f) \|_p:=\mbE \left[ \big| \pi_y(f) - \pi_y^{N}(f) \big|^p \right]^{1/p},
\eeq 
where the expectation is taken w.r.t. the probability law of the random samples $x^1, \ldots, x^N$.

Under mild assumptions on the model defined in Section~\ref{ssec:BI}, as specified in Theorem~\ref{thIS2}, we derive an explicit error bound for the estimator $\pi_y^{N}(f)$ when $f \in B(\mX)$.

\begin{theorem}\label{thIS2}
Fix $y \in \mathcal{Y}$, and assume that $g_y^{d_x}(x)$ is strictly
positive and bounded, i.e.,
\[
g_y^{d_x}(x) > 0 \quad \text{for all } x \in \mX,
\quad \text{and} \quad
\|g_y^{d_x}\|_{\infty} < \infty.
\]
Then, for any $f \in B(\mX)$ and $p \geq 1$, there exists a constant $C_y^{d_x} < \infty$, independent of $N$ and $f$, such that   
\begin{equation}\label{eq:ThIS2}
\| \pi_y(f) - \pi_y^{N}(f) \|_p
\leq
\frac{C_y^{d_x} \, \|f\|_{\infty}}{\sqrt{N}}.
\end{equation}
\end{theorem}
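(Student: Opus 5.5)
The plan is the standard self-normalised importance sampling argument: write the error as a difference of ratios, split it into two unnormalised Monte Carlo errors, and control each with a Marcinkiewicz--Zygmund (or Burkholder-type) inequality for i.i.d.\ sums. Since the weights are self-normalised, the constant $\mathtt{c}$ in \eqref{eq_ly} cancels, so we work directly with $g:=g_y^{d_x}$. Define
\[
Z:=\pi_0(g)>0, \qquad Z^N:=\frac1N\sum_{i=1}^N g(x^i).
\]
Here $Z>0$ because $g>0$ everywhere. Next write $\pi_y(f)=\pi_0(fg)/Z$ and $\pi_y^N(f)=\pi_0^N(fg)/Z^N$, where $\pi_0^N=\frac1N\sum_i\delta_{x^i}$. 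Note that $Z^N>0$ almost surely, so the estimator is well defined.

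The key algebraic step is the decomposition
\[
\pi_y^N(f)-\pi_y(f)=\frac{\pi_0^N(fg)-\pi_0(fg)}{Z}+\pi_y^N(f)\,\frac{Z-Z^N}{Z}.
\]
To verify it, add and subtract $\pi_0^N(fg)/Z$ and use $\pi_0^N(fg)\bigl(\frac{1}{Z^N}-\frac1Z\bigr)=\pi_y^N(f)\frac{Z-Z^N}{Z}$. The main point is that the random denominator $Z^N$ disappears, because $|\pi_y^N(f)|\le\|f\|_\infty$ (it is a convex combination of the values $f(x^i)$). Minkowski's inequality then gives
\[
\|\pi_y(f)-\pi_y^N(f)\|_p\le \frac{1}{Z}\Bigl(\|\pi_0^N(fg)-\pi_0(fg)\|_p+\|f\|_\infty\|Z^N-Z\|_p\Bigr).
\]

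It remains to bound each unnormalised error. For a bounded function $h$, the quantity $\pi_0^N(h)-\pi_0(h)=\frac1N\sum_i\xi_i$ is an average of i.i.d.\ centred variables $\xi_i=h(x^i)-\pi_0(h)$ with $|\xi_i|\le 2\|h\|_\infty$. The Marcinkiewicz--Zygmund inequality gives
\[
\Bigl\|\frac1N\sum_i\xi_i\Bigr\|_p\le \frac{c_p\,2\|h\|_\infty}{\sqrt N},
\]
where $c_p$ depends only on $p$. For $p\le2$ one can instead use Jensen's inequality and the elementary variance computation, which gives $c_p=1$.

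Apply this with $h=fg$, which satisfies $\|fg\|_\infty\le\|f\|_\infty\|g\|_\infty$, and with $h=g$. Combining the two bounds yields \eqref{eq:ThIS2} with
\[
C_y^{d_x}=\frac{4c_p\|g_y^{d_x}\|_\infty}{\pi_0(g_y^{d_x})}.
\]
This constant is finite, independent of $N$ and $f$, and invariant under rescaling of $g$, as it should be.

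The only genuinely delicate point is controlling the random normaliser $Z^N$, which could be small. The decomposition above avoids any moment bound on $1/Z^N$ by exploiting the boundedness of the self-normalised estimate. The remaining ingredient, the $p$-th moment inequality for i.i.d.\ sums, is classical and I will cite it rather than reprove it.
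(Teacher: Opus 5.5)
Your proposal is correct and follows essentially the same route as the paper. Your add-and-subtract decomposition is exactly the content of Proposition~\ref{MIP} (with $\alpha_0=\pi_0$, $\beta_0=\pi_0^N$, $l=\tilde l=l_y^{d_x}$), and your Marcinkiewicz--Zygmund step using $\|fg\|_\infty\le\|f\|_\infty\|g_y^{d_x}\|_\infty$ is Corollary~\ref{cor:L2bound}, giving the same constant $B_p\|g_y^{d_x}\|_\infty/\pi_0(g_y^{d_x})$ up to numerical factors; the only cosmetic difference is that you combine the two terms with Minkowski's inequality, whereas the paper uses $|a+b|^p\le 2^{p-1}(|a|^p+|b|^p)$.
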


\noindent\textbf{Proof:} See Appendix~\ref{appThIS2}. \qed

The inequality in \eqref{eq:ThIS2} is well known \cite{agapiou2017importance}; it states that the approximation error rate is $\mathcal{O}(N^{-\frac{1}{2}})$. Moreover, our analysis provides an explicit upper bound for the constant $C_y^{d_x}$ in \eqref{eq:ThIS2}, namely
\begin{equation}\label{ineq_C_ydx}
C_y^{d_x} \leq \frac{B_p \|g_y^{d_x}\|_{\infty}}{\pi_0(g_y^{d_x})},
\end{equation}
where $B_p$ is a finite constant depending solely on $p$ (refer to \eqref{eq:C_ydz} in Appendix \ref{appThIS2}). This constant arises from the Marcinkiewicz--Zygmund inequality and is independent of the dimensions $d_x$ and $d_y$.

\begin{remark}
The pdf of the r.v.\ \(Y\), is precisely the normalizing constant \(\pi_0(g_y^{d_x})\) that appears explicitly in the upper bound of \eqref{ineq_C_ydx}. The ratio \(\|g_y^{d_x}\|_{\infty} / \pi_0(g_y^{d_x})\) captures the possible dependence of this bound on the state dimension \(d_x\). A closely related quantity, obtained by replacing \(\|g_y^{d_x}\|_{\infty}\) with \(\|g_y^{d_x}\|_{L^2(\pi_0)}\), plays a central role in the subsequent analysis, where \(Y\) is treated as a r.v. This is the main subject of Section \ref{sec:General_Random}.
\end{remark}


\section[Models with random observations]{Random observations}
\label{sec:General_Random}

In this section, we adopt the viewpoint that $Y$ itself is a r.v. generated by the model $\mM^{d_x}$ instead of using it as given (fixed) data. With this perspective, the likelihood function, the normalization constant and the posterior measure also become random objects. In particular, the quantity
\(
  \big| \pi_Y(f) - \pi_Y^{N}(f) \big|
\)
becomes a $Y$-dependent random variable. The subscript $_Y$ emphasizes the randomness of the observation, the same notation is used hereafter for other observation-dependent objects.
To derive an upper bound on this random error, we take the expectation
w.r.t. the law of $Y$ in order to obtain an averaged error.
This leads us naturally to examine how integration over the random
observation is performed.
The central tool for this development is the Bochner integrability
\cite{dashti2015bayesian} of a specially constructed function, which acts
as a link between the fixed and random observation formulations.

\subsection{Random posterior measure}

We  adopt the perspective that the observation $Y$ is a r.v. generated by the model in Section~\ref{ssec:BI}. In that setting,
\beq
l_Y^{d_x}(x) :=  \mathtt{c}\, g(Y \mid x) \quad \text{and} \quad \pi_0(l_Y^{d_x})
\nn
\eeq
are real-valued r.v.s (recall that $\mathtt{c} \in \mbR^+$ is an arbitrary positive constant). Consequently, the posterior distribution becomes a random probability
measure, which we denote as
\[
\pi_Y(\sd x) =\frac{l_Y^{d_x}(x)\,\pi_0(\sd x)}{\pi_0(l_Y^{d_x})}=\frac{g_Y^{d_x}(x)\,\pi_0(\sd x)}{\pi_0(g_Y^{d_x})}.
\]
Recall that  \( g_y^{d_x}(x) = g(y \mid x) \) coincides with the
conditional pdf of \( Y \) given \( X = x \).
As a consequence, \( g_y^{d_x}(x) \) is strictly positive, that is,
\[
g_y^{d_x}(x) > 0, \quad \forall (x,y) \in \mX \times \mY,
\]
and it is normalized, i.e.,
\begin{equation*}
    \int_{\mathcal{Y}} g_y^{d_x}(x)\,\sd y = 1,
    \quad \forall x \in \mX,
\end{equation*}
where \( \sd y \) denotes the Lebesgue measure on \( \mathcal{Y} \).
Finally, we recall the assumption that \( g_y^{d_x}(x) \in L^2(\pi_0) \) for all \( y \in \mathcal{Y} \). It is clear that Algorithm \ref{A2} yields an estimator $\pi_Y^N$ of $\pi_Y$, using a likelihood function of the form $l_Y^{d_x}(x) = \mathtt{c}\, g_y^{d_x}(x)$ for an arbitrary constant $\mathtt{c} > 0$ independent of $x$. Since the algorithm relies on normalized weights, the particular value of this constant has no effect on the resulting estimator. The key difference is that the estimator \( \pi_Y^{N} \) now depends on the realizations of the random variable \( Y \). Consequently, $\pi_Y^{N}$ is itself a random probability measure, whose randomness arises both from the Monte Carlo samples $X=x^i$, $i = 1,\ldots,N$, and from the observation $Y$.

\noindent 
We aim to derive an upper bound for the approximation error in \eqref{eq:norm_Exp}, averaged over the observations, where the expectation is taken w.r.t. the joint law of the random samples $x^1, \ldots, x^N$ and the observation $Y$. Much of the analysis to be elaborated in this section depends on two
objects that we introduce next:
\begin{itemize}
\item The probability measure \( \eta \in \mP(\mY) \), defined as
\beq\label{eq:eta}
\eta(\sd y) := \pi_0(g_y^{d_x})\,\sd y,
\eeq
which corresponds to the marginal probability law of the random variable $Y$ induced by the model $\mathcal{M}^{d_x}$ and also serves as the normalization constant in Eq.~\eqref{eqDef_piy}.

\item The measurable map
\beq
\begin{array}{llll}
    \ell^{\,d_x}: &\mathcal{Y} &\mapsto &L^2(\pi_0)\\
    &y &\leadsto &\ell_y^{\,d_x} := \dfrac{g_y^{d_x}(\cdot)}{\pi_0(g_y^{d_x})}= \dfrac{l_y^{d_x}(\cdot)}{\pi_0(l_y^{d_x})},
\end{array}
\label{eq:link}
\eeq
hereafter termed the \emph{link function}, which assigns to each observation
\( y \) a normalized density over \( \mX \). Since \( \pi_0(g_y^{d_x}) > 0, \; \forall y\in \mY\), the map \( \ell^{\,d_x}:\mY \mapsto L^2(\pi_0) \) is well defined.
\end{itemize}
For any $p \in \mathbb{N}$, the $L^2(\pi_0)$-norm of $\ell_y^{\,d_x}$ can be written as
\begin{equation}
\label{norm_link_f}
\|\ell_y^{\,d_x}\|_{L^2(\pi_0)}^p
=
\frac{\|g_y^{d_x}\|_{L^2(\pi_0)}^p}{\bigl(\pi_0(g_y^{d_x})\bigr)^p}.
\end{equation}
For any test function \( \varphi \in L^2(\pi_0) \), the posterior expectation conditioned on the random observation \( Y \) is given by
\begin{equation*}
\pi_Y(\varphi) = \int_\mX \varphi(x) \, \pi_Y (\sd x)=\mathbb{E}\!\left[ \varphi(X) \mid \mathcal{G} \right],
\end{equation*}
where \( \mathcal{G} := \sigma(Y) \) denotes the sigma-algebra generated by $Y$. Moreover, this quantity admits the representation
\begin{equation}\label{eq:pi_Y_Inner}
\pi_Y(\varphi)
= \frac{\int_\mX \varphi(x)\,g_y^{d_x}(x)\,\pi_0(\sd x)}
       {\pi_0(g_y^{d_x})}
= \langle \varphi, \ell_Y^{\,d_x} \rangle_{L^2(\pi_0)},
\end{equation}
where \( \langle \cdot,\cdot \rangle_{L^2(\pi_0)} \) denotes the inner product
in the Hilbert space \( L^2(\pi_0) \).

\subsection{Bochner integrability}
\label{sec:Bochner_Int}

Take test functions $\varphi, \psi \in L^2(\pi_0)$. 
The quantity $\mathbb{E}[\pi_Y(\varphi)]$ defines a linear functional on $L^2(\pi_0)$, 
while $\mathbb{E}[\pi_Y^2(\varphi)]$ defines a quadratic form induced by the symmetric bilinear form
\[
\begin{array}{rcll}
\mathrm{B}: & L^2(\pi_0) \times L^2(\pi_0) & \mapsto & \mathbb{R},\\[1mm]
& (\varphi, \psi) & \leadsto & \mathbb{E}[\pi_Y(\varphi)\,\pi_Y(\psi)],
\end{array}
\]
where the expectation is taken w.r.t. the law of the random observation $Y$.

\noindent By the inner-product representation in
Eq.~\eqref{eq:pi_Y_Inner}, the boundedness of
\( \mbE[\pi_Y^p(\varphi)] \), \( p\in\{1,2\} \), is closely related to the
Bochner integrability of the map \( \ell^{\,d_x}: y\leadsto \ell_y^{\,d_x} \). This connection is made precise below:

\noindent The function \( \ell^{\,d_x} \) is called Bochner $p$-integrable if and only if it is strongly \( \eta \)-measurable \cite{tuomas2016analysis}, and
\begin{equation}
\label{eq:Kp}
\int_{\mathcal{Y}} \|\ell_y^{\,d_x}\|_{L^2(\pi_0)}^p\,\eta(\sd y) =: K_p^{d_x} < \infty.
\end{equation}
The strong $\eta$-measurability condition is satisfied under mild regularity assumptions on \( g_y^{d_x}(x)\), which already hold for the models of interest (see Lemma~\ref{lem:S_M} in Appendix \ref{apx:Bochner}). Hence, in the remainder of the paper we only require the finiteness of
\( K_p^{\,d_x} \) to guarantee the Bochner $p$-integrability of $\ell^{\, d_x}$.

For \( 1 \leq p \leq \infty \), we denote by
\( L^p(\mathcal{Y}; L^2(\pi_0)) \)
the Banach space of Bochner \( p \)-integrable functions
\( \upsilon : \mathcal{Y} \mapsto L^2(\pi_0) \)
w.r.t. the measure \( \eta \) (cf. \cite{tuomas2016analysis}).
In particular, for \( p = \infty \), the norm is defined as
\[
\|\upsilon\|_{L^\infty(\mathcal{Y}; L^2(\pi_0))}
:=
\operatorname*{ess\,sup}_{y \in \mathcal{Y}}
\| \upsilon_y \|_{L^2(\pi_0)} < \infty.
\]
Since \( \eta \) is a probability measure, for \( 1 \leq p < \infty \),
we have the continuous embedding
\[
L^{\infty}(\mathcal{Y}; L^2(\pi_0))
\subseteq
L^{p+1}(\mathcal{Y}; L^2(\pi_0))
\subseteq
L^p(\mathcal{Y}; L^2(\pi_0)).
\]

With this notation, we formalize the above discussion in the following 

\begin{theorem}\label{thm:4.1}
Assume that $\ell^{\,d_x}$ is Bochner $p$-integrable
for some $p \in \{1,2\}$. Then, for all $\varphi \in L^2(\pi_0)$,
\[
\mathbb{E}\!\left[\pi_Y^p(\varphi)\right]
\le
K_p^{d_x}\,\|\varphi\|_{L^2(\pi_0)}^{p},
\]
where $K_p^{d_x}<\infty$ is defined in \eqref{eq:Kp}.
\end{theorem}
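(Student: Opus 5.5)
The plan is to work pathwise in $y$ and only then integrate against the marginal law $\eta$ of $Y$. The key tool is the inner-product representation \eqref{eq:pi_Y_Inner}, namely $\pi_y(\varphi)=\langle \varphi,\ell_y^{\,d_x}\rangle_{L^2(\pi_0)}$ for every fixed $y\in\mY$. By Cauchy--Schwarz in the Hilbert space $L^2(\pi_0)$, this gives the deterministic bound
\[
|\pi_y(\varphi)| \le \|\varphi\|_{L^2(\pi_0)}\,\|\ell_y^{\,d_x}\|_{L^2(\pi_0)}, \qquad y\in\mY,
\]
and raising to the power $p\in\{1,2\}$ yields $|\pi_y(\varphi)|^p\le \|\varphi\|_{L^2(\pi_0)}^p\,\|\ell_y^{\,d_x}\|_{L^2(\pi_0)}^p$.

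Next, I would note that the expectation over the random observation is integration against $\eta$ from \eqref{eq:eta}, since $\eta$ is the marginal law of $Y$ under $\mM^{d_x}$. Hence $\mbE[\pi_Y^p(\varphi)]=\int_\mY \pi_y^p(\varphi)\,\eta(\sd y)$. For $p=1$ I would bound $\pi_y(\varphi)\le|\pi_y(\varphi)|$; for $p=2$ the integrand is already nonnegative. Integrating the pointwise inequality over $\mY$ then gives
\[
\mbE[\pi_Y^p(\varphi)] \le \|\varphi\|_{L^2(\pi_0)}^p \int_\mY \|\ell_y^{\,d_x}\|_{L^2(\pi_0)}^p\,\eta(\sd y) = K_p^{d_x}\,\|\varphi\|_{L^2(\pi_0)}^p .
\]
Finiteness of the right-hand side is exactly the Bochner $p$-integrability hypothesis \eqref{eq:Kp}.

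The only delicate point is measurability, which is needed for the integrals to make sense. The map $y\mapsto \pi_y(\varphi)$ must be $\eta$-measurable, and so must $y\mapsto\|\ell_y^{\,d_x}\|_{L^2(\pi_0)}$. I would take both from the strong $\eta$-measurability of $\ell^{\,d_x}$, assumed as part of Bochner integrability (cf.\ Lemma~\ref{lem:S_M}), because the norm and the continuous linear functional $\langle\varphi,\cdot\rangle$ preserve strong measurability. Alternatively, Fubini--Tonelli can be applied directly to $(x,y)\mapsto\varphi(x)g_y^{d_x}(x)$. Strong measurability also lets one integrate the Bochner integral under the bounded functional $\langle\varphi,\cdot\rangle$ when $p=1$, although this is not strictly needed for the inequality.
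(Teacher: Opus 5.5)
Your proposal is correct and is essentially the paper's argument. The paper proves the theorem through Lemma~\ref{lem:bounded_functional} and Remark~\ref{rem:proof_cor}, which reduce to the same pointwise Cauchy--Schwarz bound $|\langle \varphi,\ell_y^{\,d_x}\rangle_{L^2(\pi_0)}|\le \|\varphi\|_{L^2(\pi_0)}\|\ell_y^{\,d_x}\|_{L^2(\pi_0)}$ integrated against $\eta$; the only difference is packaging, since the paper phrases it as boundedness of $\mathrm{F}$ and $\mathrm{Q}$ on $L^\infty(\mathcal{Y};L^2(\pi_0))$ evaluated at the constant extension $\widetilde{\varphi}$, whose norm equals $\|\varphi\|_{L^2(\pi_0)}$, and the converse/optimality part of that lemma is not needed for this statement.
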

\begin{proof}
See Lemma~\ref{lem:bounded_functional} and Remark~\ref{rem:proof_cor} in Appendix \ref{apx:Bochner}.
\end{proof}

\subsection{Approximation errors}

We now provide a convergence result for the IS Monte Carlo approximation of the posterior
expectation with random observations.

\begin{theorem}\label{thm:thISR}
Assume that $g_y^{d_x}(x) \in L^{2}(\pi_0)$ for all $y \in \mathcal{Y}$.
Then, for any $f \in B(\mX)$, the following statements are equivalent:

\begin{enumerate}[label=\roman*)]
    \item There is a constant \( C_2^{d_x} < \infty \), independent of \( N \) and
    \( f \), such that 
    \beq\label{eq:Iff_error}
    \| \pi_Y(f) - \pi_Y^{N}(f) \|_2
    \leq
    \frac{C_2^{d_x} \|f\|_\infty}{\sqrt{N}}.
    \eeq
    \item  The link function $\ell^{\, d_x}$ is Bochner square-integrable, i.e.
\[
\ell^{\, d_x}\in L^2(\mathcal Y;L^2(\pi_0)).
\]
\end{enumerate}
Moreover, $C_2^{d_x}$ can be decomposed as
\[
C_2^{d_x} = B_2\, K_2^{d_x},
\]
where $B_2$ is a constant independent of the state dimension $d_x$ and $d_y$ arising from the
Marcinkiewicz--Zygmund inequality and
$K_2^{d_x}$ accounts for the possible dependence on $d_x$ through the
Bochner integrability of the link function $\ell^{\, d_x}$.
\end{theorem}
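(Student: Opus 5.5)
The plan is to prove the two implications separately, working conditionally on $Y=y$ and then averaging over $\eta$. The main tool is the standard decomposition of the self-normalised error. Write $\pi_0^N:=\frac1N\sum_{i}\delta_{x^i}$ for the empirical prior and $g=g_y^{d_x}$. Then
\[
\pi_y^N(f)-\pi_y(f)=\frac{\pi_0^N(fg)-\pi_0(fg)}{\pi_0(g)}+\pi_y^N(f)\,\frac{\pi_0(g)-\pi_0^N(g)}{\pi_0(g)},
\]
and this identity does not require $g$ to be bounded.

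\emph{Step 1, (ii)$\Rightarrow$(i).} I take conditional $L^2$ norms in the decomposition, using $|\pi_y^N(f)|\le\|f\|_\infty$ and the Marcinkiewicz--Zygmund inequality (in fact just the i.i.d. variance identity for $p=2$). Both centred empirical averages have second moment at most $B_2^2\|f\|_\infty^2\|g_y^{d_x}\|_{L^2(\pi_0)}^2/N$. Combining them by Minkowski's inequality gives
\[
\mbE\big[|\pi_y(f)-\pi_y^N(f)|^2\,\big|\,Y=y\big]\le \frac{4B_2^2\|f\|_\infty^2}{N}\,\frac{\|g_y^{d_x}\|_{L^2(\pi_0)}^2}{\pi_0(g_y^{d_x})^2}=\frac{4B_2^2\|f\|_\infty^2}{N}\,\|\ell_y^{\,d_x}\|^2_{L^2(\pi_0)},
\]
where the last equality is \eqref{norm_link_f}. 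Integrating against $\eta$ by the tower property gives the bound $4B_2^2K_2^{d_x}\|f\|_\infty^2/N$. This yields (i) with a constant of the form $B_2$ times a function of $K_2^{d_x}$. I will absorb the factor $2$ into $B_2$ and keep track of whether the constant should read $K_2^{d_x}$ or its square root; in any case the $d_x$-dependence enters only through $K_2^{d_x}$.

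\emph{Step 2, (i)$\Rightarrow$(ii).} Here I need a matching lower bound, and I would obtain it through the asymptotic variance. For fixed $y$ and $f\in B(\mX)$, the delta method (CLT for the pair $(\pi_0^N(fg),\pi_0^N(g))$, which is valid since $g\in L^2(\pi_0)$) together with Fatou's lemma for the squared errors gives
\[
\liminf_{N\to\infty} N\,\mbE\big[|\pi_y^N(f)-\pi_y(f)|^2\big|\,Y=y\big]\ \ge\ \pi_0\big((\ell_y^{\,d_x})^2(f-\pi_y(f))^2\big)=:\sigma_y^2(f).
\]
Applying Fatou once more over $y\sim\eta$, (i) implies $\int\sigma_y^2(f)\,\eta(\sd y)\le (C_2^{d_x})^2\|f\|_\infty^2$ for every $f$, with a constant uniform in $f$.

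\emph{Step 3, the expected obstacle.} It remains to pass from the weighted variances $\sigma_y^2(f)$ to $\|\ell_y^{\,d_x}\|^2_{L^2(\pi_0)}=\pi_0((\ell_y^{\,d_x})^2)$. The difficulty is that for a single $f$ the factor $(f-\pi_y(f))^2$ may vanish where $\ell_y$ is large. My first attempt would use a sequence of $\pm1$-valued ``Rademacher-type'' test functions $f_k$, built on finer and finer partitions of $\mX$, such that $f_k\rightharpoonup0$ weakly in $L^1(\pi_0)$; this requires $\pi_0$ to be non-atomic, and otherwise I would handle atoms separately. Then for every $y$ we have $c_k(y):=\pi_y(f_k)=\langle f_k,\ell_y^{\,d_x}\rangle\to0$, since $\ell_y^{\,d_x}\in L^1(\pi_0)$. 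Because $f_k^2=1$, the integrand satisfies $(f_k-c_k)^2=1-2c_kf_k+c_k^2\ge1-2|c_k|$ pointwise, so $\liminf_k\sigma^2_y(f_k)\ge\pi_0((\ell_y^{\,d_x})^2)$. A final application of Fatou's lemma in $k$ and $y$ then gives
\[
K_2^{d_x}=\int\|\ell_y^{\,d_x}\|^2_{L^2(\pi_0)}\,\eta(\sd y)\le (C_2^{d_x})^2<\infty,
\]
which is (ii). The steps I expect to need the most care are the construction of $f_k$ and justifying the interchanges of limits via Fatou.
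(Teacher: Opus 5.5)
\textbf{Where your proposal departs from the paper.} Your sufficiency step is the paper's argument: the decomposition of Proposition~\ref{MIP}, the variance/Marcinkiewicz--Zygmund bound, then integration against $\eta$. As you suspected, it gives $\mbE|\pi_Y(f)-\pi_Y^N(f)|^2\le 4K_2^{d_x}\|f\|_\infty^2/N$, so the constant in (i) scales like $\sqrt{K_2^{d_x}}$, not $K_2^{d_x}$.

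For necessity your route differs from the paper's, and yours is the one that actually works. The paper expands the square, claims that finiteness of the third term alone is necessary, takes $f\ge\epsilon$, and applies the lower Marcinkiewicz--Zygmund inequality. That fails for two reasons. First, the cross term can be negative; for $f\equiv 1$ the three terms cancel and the error is identically zero. Second, the error is always bounded by $2\|f\|_\infty$, so the same reasoning would give $K_2^{d_x}<\infty$ for every model without using (i).

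Your argument is sound. You use the delta-method lower bound $\liminf_N N\,\mbE|\pi_y(f)-\pi_y^N(f)|^2\ge\sigma_y^2(f)$, Fatou in $N$ and in $y$, then $\pm1$ test functions with $c_k(y)=\pi_y(f_k)\to0$ and $(f_k-c_k)^2\ge 1-2|c_k|$. You correctly read (i) as uniform in $f$; with the literal ``for any $f$'' quantifier the equivalence fails for constant $f$. The argument even gives $K_2^{d_x}\le (C_2^{d_x})^2$.

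What you need from $f_k$ is $\int f_k h\,\sd\pi_0\to0$ for all $h\in L^1(\pi_0)$, i.e.\ weak-$*$ convergence in $L^\infty(\pi_0)$. For non-atomic $\pi_0$, take $f_k=r_k\circ T$, with $r_k$ the Rademacher functions and $T$ a measurable map pushing $\pi_0$ to Lebesgue measure on $[0,1]$. Then $\int f_k h\,\sd\pi_0=\int_0^1 r_k(t)\,\mbE_{\pi_0}[h\mid T=t]\,\sd t\to0$.

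\textbf{The gap: atoms.} The genuine gap is ``otherwise I would handle atoms separately'', and in general it cannot be filled.

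Finitely many atoms are harmless. Their contribution to $\|\ell_y^{\,d_x}\|_{L^2(\pi_0)}^2$ is $\sum_a\pi_y(\{a\})^2/\pi_0(\{a\})\le\sum_a 1/\pi_0(\{a\})$, uniformly in $y$. You can then run your argument with $f_k$ of Rademacher type on the diffuse part and $f_k=0$ on the atoms.

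With infinitely many atoms, however, (i) can hold while (ii) fails. Take $\mX=\{1,2,\dots\}$, $\pi_0(\{j\})=2^{-j}$, $d_y=1$, and $g(y\mid j)=(1-\epsilon)\mathbf{1}_{[j,j+1)}(y)+\epsilon\phi(y)$, with $\phi$ the standard normal density.
\begin{itemize}
\item For $y\in[k,k+1)$ and $k$ large, $\|\ell_y^{\,d_x}\|^2_{L^2(\pi_0)}\approx 2^k$, while $\eta([k,k+1))\ge(1-\epsilon)2^{-k}$. Hence $K_2^{d_x}=\infty$.
\item Write the error as $\pi_0^N(g_y\bar f)/\pi_0^N(g_y)$ with $\bar f=f-\pi_y(f)$, and use $\pi_0(g_y\bar f)=0$. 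The numerator equals $\frac{b}{N}\sum_i Z_i$, where $b=\epsilon\phi(y)\le e^{-k^2/2}$, $m=\sum_{j\neq k}2^{-j}\bar f(j)$, and $Z_i=\bar f(x^i)\mathbf{1}_{\{x^i\neq k\}}-2^k m\,\mathbf{1}_{\{x^i=k\}}$. These satisfy $\mbE Z_i=0$ and $\mbE Z_i^2\lesssim 2^k\|f\|_\infty^2$.
\item On the event that at least $N2^{-k}/2$ particles equal $k$, the denominator is $\gtrsim 2^{-k}$, so this event contributes $\lesssim 8^k e^{-k^2}\|f\|_\infty^2/N$, which is summable against $\eta([k,k+1))$.
\item The complementary event has probability $\le e^{-N2^{-k}/8}$, and $\sum_k\eta([k,k+1))\,e^{-N2^{-k}/8}=O(1/N)$.
\item For $y<1$ the posterior equals the prior, so the error there is plain Monte Carlo error.
\end{itemize}
Hence $\mbE|\pi_Y(f)-\pi_Y^N(f)|^2\le C\|f\|_\infty^2/N$ uniformly in $f$ and $N$, even though $K_2^{d_x}=\infty$.

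So your proof establishes the equivalence only under an added hypothesis, such as ``$\pi_0$ has at most finitely many atoms''. This covers non-atomic priors and hence the paper's examples. You should state that hypothesis explicitly rather than defer the atomic case.
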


\begin{proof}
See Appendix~\ref{appThIS3}.
\end{proof}

Theorem~\ref{thm:thISR} provides a necessary and sufficient condition for the
$L^2$ approximation error of the IS estimator $\pi_Y^N$,
produced by Algorithm~\ref{A2}, to converge at the canonical rate
$\mathcal{O}(N^{-1/2})$. The result holds in the setting where the observation
$Y$ is treated as a r.v. and yields a complete characterization of
the $L^2$ Monte Carlo error of the posterior approximation: the rate
holds if and only if the link function is
square integrable in the Bochner space $L^2(\mathcal{Y};L^2(\pi_0))$.
In particular, the convergence of the random-observation Monte Carlo approximation is
characterized by this intrinsic integrability condition on the link
function, which separates the statistical properties of the observation model
from those of the sampling scheme. Moreover, this characterization provides
a direct mechanism to control the dependence of the error bounds on the state
dimension $d_x$, a feature that is made explicit in the following theorem.

\begin{theorem}\label{thIS2R}
Assume that $\ell^{\, d_x}$ is Bochner $p$-integrable
for some $p \in \{1,2\}$. Suppose further that there exists a polynomial
$P_{n,p}(d_x)$ of degree $n$, such that
\begin{equation}\label{eq:K_p_leq_P}
K_p^{d_x} \le P_{n,p}(d_x), \qquad \forall d_x \in \mathbb{N}.
\end{equation}
Then, for any $f \in B(\mX)$,
\[
\| \pi_Y(f) - \pi_Y^{N}(f) \|_p
\le
\frac{B_p P_{n,p}(d_x)\,\|f\|_\infty}{\sqrt{N}},
\]
where $B_p$ is a constant arising from the Marcinkiewicz--Zygmund inequality and depends only on $p$.
\end{theorem}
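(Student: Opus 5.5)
The plan is to condition on the observation, apply a fixed-observation Marcinkiewicz--Zygmund argument in which the $L^2(\pi_0)$-norm of $g_y^{d_x}$ replaces its sup-norm, and then average over $Y\sim\eta$. This way the constant that appears is exactly $\|\ell_y^{\,d_x}\|_{L^2(\pi_0)}$, whose $p$-th moment under $\eta$ is $K_p^{d_x}$.

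\textbf{Step 1 (fixed $y$).} Fix $y\in\mY$ and write $\pi_0^N=\frac1N\sum_i\delta_{x^i}$. Then $\pi_y^N(f)=\pi_0^N(fg_y^{d_x})/\pi_0^N(g_y^{d_x})$. The standard decomposition
\[
\pi_y^N(f)-\pi_y(f)=\frac{\pi_0^N(fg_y^{d_x})-\pi_0(fg_y^{d_x})}{\pi_0(g_y^{d_x})}+\pi_y^N(f)\,\frac{\pi_0(g_y^{d_x})-\pi_0^N(g_y^{d_x})}{\pi_0(g_y^{d_x})},
\]
together with $|\pi_y^N(f)|\le\|f\|_\infty$, gives
\[
|\pi_y(f)-\pi_y^N(f)|\le \frac{|(\pi_0^N-\pi_0)(fg_y^{d_x})|+\|f\|_\infty|(\pi_0^N-\pi_0)(g_y^{d_x})|}{\pi_0(g_y^{d_x})}.
\]
For $p=2$, the Marcinkiewicz--Zygmund inequality (equivalently, the i.i.d.\ variance identity) gives
\[
\mbE|(\pi_0^N-\pi_0)(h)|^2\le B\,\|h\|_{L^2(\pi_0)}^2/N ,
\]
applied with $h=fg_y^{d_x}$ and $h=g_y^{d_x}$. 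Both norms are at most $\|f\|_\infty\|g_y^{d_x}\|_{L^2(\pi_0)}$ up to the factor $\|f\|_\infty$, which is finite by the standing assumption \eqref{eq:normL2}. For $p=1$, I would use Jensen/Cauchy--Schwarz to reduce to the $L^2$ estimate. By Minkowski, in both cases
\[
\mbE\bigl[|\pi_y(f)-\pi_y^N(f)|^p\bigr]\le \frac{B_p^p\,\|f\|_\infty^p\,\|\ell_y^{\,d_x}\|_{L^2(\pi_0)}^p}{N^{p/2}},
\]
using \eqref{norm_link_f}.

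\textbf{Step 2 (randomize $Y$).} Since the samples are drawn independently of $Y$, the tower property/Fubini lets me integrate the conditional bound against $\eta(\sd y)$. This requires joint measurability of $(y,x^{1:N})\mapsto$ error, which follows from measurability of $g$. Integrating gives
\[
\|\pi_Y(f)-\pi_Y^N(f)\|_p^p\le B_p^p\|f\|_\infty^p K_p^{d_x}/N^{p/2},
\]
which is finite by the assumed Bochner $p$-integrability. This is essentially the sufficiency half of Theorem~\ref{thm:thISR}, and I may cite it for $p=2$.

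\textbf{Step 3 (exponent bookkeeping and polynomial bound).} Taking $p$-th roots yields the factor $(K_p^{d_x})^{1/p}$, not $K_p^{d_x}$. I would close this gap with the observation that
\[
\|\ell_y^{\,d_x}\|_{L^2(\pi_0)}\ge\|\ell_y^{\,d_x}\|_{L^1(\pi_0)}=1,
\]
since $\pi_0$ is a probability measure and $\ell_y^{\,d_x}$ is a normalized positive density. Hence $K_p^{d_x}\ge1$, so $(K_p^{d_x})^{1/p}\le K_p^{d_x}\le P_{n,p}(d_x)$ by \eqref{eq:K_p_leq_P}, which gives the claimed bound.

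The main obstacle is Step 1 with the $L^2$-norm of $g_y^{d_x}$ in place of $\|g_y^{d_x}\|_\infty$. In particular, for $p=1$ one must avoid any bound that requires higher moments of $g_y^{d_x}$. Routing everything through the second-moment estimate and Jensen handles this.
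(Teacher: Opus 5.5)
Your proposal is correct and follows the paper's route: it is the sufficiency half of Theorem~\ref{thm:thISR}, namely the Proposition~\ref{MIP} decomposition, then the fixed-$y$ Marcinkiewicz--Zygmund bound of Lemma~\ref{lem:L2bound} in terms of $\|g_y^{d_x}\|_{L^2(\pi_0)}$, then integration against $\eta$. Your Step~3 observation that $K_p^{d_x}\ge 1$ (because $\|\ell_y^{\,d_x}\|_{L^2(\pi_0)}\ge\|\ell_y^{\,d_x}\|_{L^1(\pi_0)}=1$) justifies replacing $(K_p^{d_x})^{1/p}$ by $K_p^{d_x}\le P_{n,p}(d_x)$; the paper leaves this step implicit, and it is worth keeping.
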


\begin{proof}
See Appendix \ref{ap:T4.3}.
\end{proof}

\begin{remark}\label{rem:sample_complexity}
The error bounds established in Theorem~\ref{thIS2R} demonstrate how specific structural properties of the model can mitigate the curse of dimensionality in importance samplers. Specifically, under these assumptions, the number of samples required to approximate the posterior expectation $\pi_Y(f)$ with a prescribed average error increases at most polynomially with the dimension $d_x$, rather than exponentially.
From a practical perspective, this provides a direct prescription for the sample complexity required to maintain a prescribed accuracy. Given a target tolerance $\varepsilon > 0$ such that  $\|\pi_Y(f) - \pi_Y^N(f)\|_p \le \varepsilon$, the sample size $N$ should be tuned relative to the state dimension according to:
\begin{equation*}
N \;\ge\; \left( \frac{P_{n,p}(d_x) \|f\|_\infty}{\varepsilon} \right)^2.
\end{equation*}
In models where the observation map scales linearly with dimension (i.e., $n=1$), a quadratic increase in $N$ is sufficient to preserve the fidelity of the approximation. We provide an explicit construction of such a system, demonstrating this manageable polynomial scaling, in Section~\ref{sub:Uniformly_Bounded_Observation}.
\end{remark}

\begin{corollary}\label{cor:IS_uniform}
Under the assumptions of Theorem~\ref{thIS2R}, suppose that \( n = 0 \), i.e., there
exists a constant \( \mathcal{K}_p < \infty \), independent of \( d_x \), such
that
\begin{equation}\label{eq:U_Kdz}
\sup_{d_x \in \mathbb{N}} K_p^{d_x} \leq \mathcal{K}_p,
\end{equation}
then, for any \( f \in B(\mX) \), there exists a constant
\( C_p < \infty \), independent of \( N \), \( f \), and \( d_x \),
such that 
\beq
\| \pi_Y(f) - \pi_Y^{N}(f) \|_p
\leq
\frac{C_p \, \|f\|_\infty}{\sqrt{N}},
\qquad p \in \{1,2\}.
\nn
\eeq
In particular,
\[
\lim_{N \to \infty}
\| \pi_Y(f) - \pi_Y^{N}(f) \|_p = 0,
\]
uniformly w.r.t. the state dimension \( d_x \).
\end{corollary}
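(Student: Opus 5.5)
The corollary follows directly from Theorem~\ref{thIS2R} by specializing to a degree-zero polynomial. I will take $P_{0,p}(d_x) :\equiv \mathcal{K}_p$, the constant function. Hypothesis \eqref{eq:U_Kdz} gives $K_p^{d_x} \le \sup_{d_x'} K_p^{d_x'} \le \mathcal{K}_p$ for every $d_x\in\mbN$. Hence condition \eqref{eq:K_p_leq_P} holds with $n=0$. Finiteness of $\mathcal{K}_p$ also guarantees that $\ell^{\,d_x}$ is Bochner $p$-integrable for each $d_x$, since strong measurability is granted by Lemma~\ref{lem:S_M}. So the standing assumption of Theorem~\ref{thIS2R} is met.

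Invoking Theorem~\ref{thIS2R} then yields, for any $f\in B(\mX)$ and $p\in\{1,2\}$,
\[
\| \pi_Y(f) - \pi_Y^{N}(f) \|_p \le \frac{B_p\,\mathcal{K}_p\,\|f\|_\infty}{\sqrt{N}}.
\]
I set $C_p := B_p \mathcal{K}_p$. The only point needing care is that $C_p$ is independent of $N$, $f$ and $d_x$. For $B_p$ this holds because it comes from the Marcinkiewicz--Zygmund inequality and depends only on $p$, as stated in Theorem~\ref{thIS2R} and in the discussion after \eqref{ineq_C_ydx}. For $\mathcal{K}_p$ it holds by hypothesis. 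Note that the conclusion of Theorem~\ref{thIS2R} is stated with an explicit polynomial in $d_x$, so substituting the constant polynomial removes all $d_x$-dependence.

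For the uniform limit, I take the supremum over $d_x$ on both sides:
\[
\sup_{d_x\in\mbN}\| \pi_Y(f) - \pi_Y^{N}(f) \|_p \le \frac{C_p\|f\|_\infty}{\sqrt N}.
\]
The right-hand side tends to $0$ as $N\to\infty$, which gives convergence uniformly in $d_x$.

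There is no real obstacle. The main thing to verify is that Theorem~\ref{thIS2R} admits degree-zero polynomials, which it does since $n$ is arbitrary.
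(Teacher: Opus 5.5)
Your proposal is correct and matches the paper's argument. The paper also treats the corollary as an immediate consequence of Theorem~\ref{thIS2R}, taking the degree-zero polynomial $P_{0,p}\equiv\mathcal{K}_p$ and setting $C_p = B_p\,\mathcal{K}_p$ (Remark~\ref{rem:C_p_BK}); the uniform limit then follows because this bound does not depend on $d_x$.
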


\begin{remark}\label{rem:C_p_BK} Corollary \ref{cor:IS_uniform} is a straightforward consequence of Theorem \ref{thIS2R}.
The constant \( C_p < \infty \) admits the decomposition
\[
C_p = B_p \, \mathcal{K}_p,
\]
where \( B_p \) is a constant depending only on \( p \), arising from the
Marcinkiewicz--Zygmund inequality.
\end{remark}

\section{Special cases}\label{sec:Special_cases}

In this section, we examine specific scenarios under which the assumptions of Theorem~\ref{thIS2R} and Corollary~\ref{cor:IS_uniform} are satisfied, that is, when the inequalities \eqref{eq:K_p_leq_P} and \eqref{eq:U_Kdz} holds. It is worth emphasizing that the bounds derived here are not intended to be tight. 
Instead, our goal is to show that, under the theory developed in the Section \ref{sec:General_Random}, the error bounds for the importance sampling estimator \(\pi_Y^N\) may scale polynomially, or even remain uniformly bounded, w.r.t. the state dimension \(d_x\).

\noindent Throughout this Section, we focus on the case \(p=2\), which subsumes the case \(p = 1\)

\begin{remark}\label{rem:link_norm}
Observe that the quantity
\[
\| \ell_y^{\, d_x} \|_{L^2(\pi_0)}^2
=
\frac{\pi_0([g_y^{d_x}]^2)}{\pi_0(g_y^{d_x})^2}
\]
is directly related to the $\chi^2$-divergence between the posterior
and the prior distributions. It is also related to the
relative variance of the Monte Carlo estimator of the normalization
constant $Z := \pi_0(g_y^{d_x})$ when sampling from the prior
(see Appendix~\ref{app:Z_ratio}) and has already been found to be relevant in the analysis of IS methods; see, e.g., \cite{agapiou2017importance,akyildiz2021convergence}. 
\end{remark}

In the setting of the present work, where the observation $Y$ is assumed random, we can use Eq. \eqref{norm_link_f} and the identity \( \eta(\mathrm{d}y) = \pi_0(g_y)\,\sd y \) to obtain
\begin{equation*}
K_p^{d_x}= \int_{\mathcal{Y}} \| \ell_y^{\,d_x} \|_{L^2(\pi_0)}^p \, \eta(\mathrm{d}y)
=
\int_{\mathcal{Y}} \frac{ \| g_y^{d_x} \|_{L^2(\pi_0)}^p }{ \pi_0(g_y^{d_x})^{p-1} } \, \sd y.
\end{equation*}
In particular, for the case $p=2$ we have
\beq\label{eq:ratio}
K_2^{d_x}=
\int_{\mX} \left( \int_{\mathcal{Y}} \frac{g_y^{d_x}(x)^2}{\pi_0(g_y^{d_x})} \, \mathrm{d}y \right) \pi_0(\mathrm{d}x).
\eeq
If \(\ell^{\, d_x}_Y\) is regarded as a random element in \(L^2(\pi_0)\), the Bochner integrability condition in \eqref{eq:Kp} is equivalent to the moment condition
\begin{equation}
\label{eq:BI_as_Ex}
K_p^{d_x} = \mathbb{E}\!\left[\| \ell_Y^{\,d_x} \|_{L^2(\pi_0)}^p\right] < \infty .
\end{equation}

The previous discussion shows that the key condition required for the
dimension-dependent error bounds of Theorem~\ref{thIS2R} to hold reduces to the
finiteness of the moment in \eqref{eq:BI_as_Ex}. In other words, the error
of the IS estimator is governed by the integrability of the link function
$\ell_Y^{d_x}$ as a random element of $L^2(\pi_0)$.
In the remainder of this section, we verify this condition for several
relevant observation models. The analysis illustrates how the structure
of the likelihood determines the dependence of the error bounds on the
state dimension $d_x$, and shows that polynomial, or even uniform,
bounds can arise in common statistical models. We begin with the linear
Gaussian model, since in this case all the relevant distributions can be
computed analytically, regardless of the dimensionality of the problem.

\subsection{Linear Gaussian model}\label{sec:Linear_Gaussian}

We consider a family of linear Gaussian models $\{\mM^{d_x}\}_{d_x \in \mbN}$, indexed by the state dimension \(d_x\). Let \(X \sim \pi_0:=\mathcal{N}(\mu_x^{\,d_x},\Sigma_x^{\,d_x})\) be a \(d_x\)-dimensional Gaussian random vector with mean \(\mu_x^{\,d_x} \in \mbR^{d_x}\) and positive definite covariance matrix \(\Sigma_x^{\,d_x} \in \mbR^{d_x \times d_x}\).  
The observation model is defined as  
\begin{equation*}
Y = A^{d_x}X + V, 
\end{equation*}
where \(A^{d_x} \in \mbR^{d_y \times d_x}\) is a fixed matrix and \(V \sim \mathcal{N}(0,R)\) is a zero-mean Gaussian noise vector with positive definite covariance matrix \(R \in \mbR^{d_y \times d_y}\), independent of \(X\) and also independent of its dimension $d_x$.
As a result, the r.v. \(Y\) is Gaussian with marginal law
\[
Y \sim \mathcal{N}(\sd y; \mu_y^{d_x},\Sigma_y^{d_x}),
\]
where
\begin{equation}
\mu_y^{d_x} = A^{d_x}\mu_x^{d_x}, 
\qquad 
\Sigma_y^{d_x} = A^{d_x}\Sigma_x^{\,d_x} [A^{d_x}]^\top + R.
\label{eq:mu_Sd}
\end{equation}
In particular,
\beq
\pi_0(g_y^{d_x})=\mN( y; \mu_y^{d_x}, \Sigma_y^{d_x} ),
\label{eqPdf-y}
\eeq
i.e., $\pi_0(g_y^{d_x})$ is the pdf of $Y$.

Hereafter we show that, for the linear Gaussian model described above, the Bochner constant \(K_2^{d_x}\) can exhibit either polynomial growth or remain uniformly bounded w.r.t. the state dimension \(d_x\), depending on the behavior of the model parameters. In particular, we derive sufficient conditions under which \(K_2^{d_x}\) satisfies a polynomial bound in \(d_x\), as well as stronger conditions ensuring the existence of a finite constant \(\mathcal K_2\), independent of \(d_x\), such that inequality \eqref{eq:U_Kdz} holds. Consequently, the assumptions of Corollary~\ref{cor:IS_uniform} are satisfied. 

\subsubsection{Analysis of the Bochner constant \texorpdfstring{$K_2^{d_x}$}{K2^{dx}}}

Let us note that, since \(A^{d_x}\Sigma_x^{d_x}[A^{d_x}]^\top\) is positive semidefinite and \(R\) is positive definite, it follows that \(\Sigma_y^{d_x}\) in \eqref{eq:mu_Sd} is also positive definite.
In addition, the likelihood function $g_y^{d_x}(x)$ corresponds to the Gaussian pdf of $Y$ conditional on $X=x$, namely,
\[
g_y^{d_x}(x) = \mathcal{N}(y; A^{d_x}x, R).
\]
Squaring this pdf yields
\beq\label{eq:g_y-square}
g_y^{d_x}(x)^2
= \frac{1}{(2\uppi)^{d_y} |R|}
\exp\!\left(
-\frac{1}{2}(y-A^{d_x}x)^\top (2R^{-1})(y-A^{d_x}x)
\right),
\eeq
which is proportional to a Gaussian density with covariance \(\tfrac{1}{2}R\). Indeed, we can express
\beq\label{eq:C_1}
g_y^{d_x}(x)^2 = C_1~\mathcal{N}\!\left(y; A^{d_x}x, \tfrac{1}{2}R\right)
\eeq
for some constant $C_1$. On the other hand,
\beq\label{eq:Normal_1R2}
\mathcal{N}\!\left(y; A^{d_x}x, \tfrac{1}{2}R\right)
=
(2\uppi)^{-\frac{d_y}{2}} |\tfrac{R}{2}|^{-\frac{1}{2}}
\exp\!\left(-\frac{1}{2}(y-A^{d_x}x)^\top (2R^{-1})(y-A^{d_x}x)\right).
\eeq
Combining \eqref{eq:g_y-square}-\eqref{eq:Normal_1R2}, and simplifying terms, yields the constant
\[
C_1
=
(4\pi)^{-d_y/2} |R|^{-1/2}.
\]
Consequently, recalling that $\pi_0(\mathrm{d}x) = \mN(\mathrm{d}x; \mu_x^{d_x}, \Sigma_x^{d_x})$, we readily obtain
\beq
\int_\mX g_y^{d_x}(x)^2\,\pi_0(\mathrm{d}x)
=
\frac{1}{(4\uppi)^{d_y/2} |R|^{1/2}}
\mathcal{N}(y;\mu_y^{d_x},\Sigma_y^{d_x} - \tfrac{1}{2}R).
\label{eqPdf-y2}
\eeq
Using \eqref{eqPdf-y} and \eqref{eqPdf-y2}, we realise that, in order to satisfy the integrability condition \eqref{eq:ratio}, it suffices to find a finite bound for the expectation
\begin{equation}
\label{eq:norm_ell_Gauss}
\mathbb{E}\!\left[\| \ell_Y^{\,d_x} \|_{L^2(\pi_0)}^2\right]
=
\frac{1}{(4\uppi)^{d_y/2} |R|^{1/2}}
\int_\mY
\frac{\mathcal{N}(y;\mu_y^{d_x},\Sigma_y^{d_x}-\tfrac{1}{2}R)}
{\mathcal{N}(y;\mu_y^{d_x},\Sigma_y^{d_x})}
\,\sd y.
\end{equation}
Let \(S_1 := \Sigma_y^{d_x}\) and \(S_2 := \Sigma_y^{d_x} - \tfrac{1}{2}R\). The integrand in \eqref{eq:norm_ell_Gauss} can be rewritten as
\[
\frac{\mathcal{N}(y;\mu_y^{d_x},S_2)}{\mathcal{N}(y;\mu_y^{d_x},S_1)}
=
\left(\frac{|S_1|}{|S_2|}\right)^{1/2}
\exp\!\left(
-\tfrac{1}{2}(y-\mu_y^{d_x})^\top (S_2^{-1}-S_1^{-1})(y-\mu_y^{d_x})
\right).
\]
Moreover, both \( S_1 \) and \( S_2 \) are positive definite, and
\[
S_1 = \Sigma_y^{d_x} \succ \Sigma_y^{d_x} - \tfrac{1}{2}R = S_2,
\]
where \( \succ \) denotes the Loewner order (see Section 7.7 in ~\cite{horn2012matrix}). Let us also define
\beq 
\label{eq_SS2S1}
S^{-1} := S_2^{-1} - S_1^{-1}.
\eeq
By the operator monotonicity of matrix inversion on the cone of positive definite matrices (see Theorem 24 in \cite{magnus2019matrix}), it follows that
\[
S_1 \succ S_2 \quad \Rightarrow \quad S_1^{-1} \prec S_2^{-1} \quad \Rightarrow \quad S^{-1} \succ 0
\]
(hence, $S^{-1}$ is positive definite). The integrand in \eqref{eq:norm_ell_Gauss} is thus proportional to the density of a Gaussian distribution with mean $\mu_y^{d_x}$ and covariance $S$. 
Then, we can write 
\beq\label{eq:Gaus_C2}
\left(\frac{|S_1|}{|S_2|}\right)^{1/2}\exp\!\left(
-\tfrac{1}{2}(y-\mu_y^{d_x})^\top (S^{-1})(y-\mu_y^{d_x})
\right)= C_2 \,\mathcal{N}(y;\mu_y^{d_x},S)
\eeq
for some constant $C_2$. On the other hand, 
\beq\label{eq:Gauss_C22}
\mathcal{N}(y;\mu_y^{d_x},S)= \frac{1}{(2\uppi)^{d_y/2} \abs{S}^{1/2}} \exp\!\left(
-\tfrac{1}{2}(y-\mu_y^{d_x})^\top (S^{-1})(y-\mu_y^{d_x})
\right),
\eeq
and combining \eqref{eq:Gaus_C2} and \eqref{eq:Gauss_C22} we obtain
\beq\label{eq:C2}
C_2=\left(\frac{ (2\uppi)^{d_y} \abs{S} |S_1|}{|S_2|}\right)^{1/2}.
\eeq
Therefore, the integral in \eqref{eq:norm_ell_Gauss} can be computed exactly to yield
\beq\label{eq:int_g2_g1_Gauss}
\int_\mY \,
\frac{\mathcal{N}(y;\mu_y^{d_x},S_2)}{\mathcal{N}(y;\mu_y^{d_x},S_1)}
\,\sd y = C_2  \int_\mY \,\mathcal{N}(y;\mu_y^{d_x},S) \, \sd y
=
C_2.
\eeq
Using the identity $S_2^{-1}-S_1^{-1} = S_1^{-1}(S_1-S_2)S_2^{-1}$, and since $S_1-S_2=\cfrac{1}{2}R$,
Eq. \eqref{eq_SS2S1} readily yields
\beq
S^{-1} = S_1^{-1}\left(\cfrac{1}{2}R\right)S_2^{-1} 
\nn
\eeq
and, as a consequence,
\[
S = S_2(2R^{-1})S_1
\]
and 
\beq
\abs{S} = 2^{d_y} \frac{\abs{S_1}\abs{S_2}}{\abs{R}}.
\label{eq__b}
\eeq
Substituting  \eqref{eq__b} into \eqref{eq:C2} we arrive at
\beq
C_2 = (4\uppi)^{d_y/2}\frac{   |\Sigma_y^{d_x}|}{|R|^{1/2}}
\label{eq__c}
\eeq
and then, combining \eqref{eq__c}, \eqref{eq:int_g2_g1_Gauss} and \eqref{eq:norm_ell_Gauss}, we readily see that
\begin{equation}
K_2^{d_x}=\label{eq:B_LG}
\mathbb{E}\!\left[\| \ell_Y^{\,d_x} \|_{L^2(\pi_0)}^2\right]
=\frac{   |\Sigma_y^{d_x}|}{|R|}
,
\end{equation}
which is always finite (provided that $R \succ 0$, which implies $\Sigma_y^{d_x} \succ 0$ as well), ensuring that the moment condition \eqref{eq:BI_as_Ex} holds and so does the Bochner integrability condition \eqref{eq:Kp}.
By \eqref{eq:mu_Sd} and \eqref{eq:B_LG} the Bochner constant is
\begin{equation}\label{eq:Boch_Det}
K_2^{d_x}
=
\frac{
\left|
A^{d_x}\Sigma_x^{d_x}(A^{d_x})^\top + R
\right|
}{
|R|
}.
\end{equation}
Since \(R\) is positive definite, we can rewrite this expression as
\begin{equation*}
K_2^{d_x}
=
\left|
I_{d_y}
+
R^{-1/2}
A^{d_x}\Sigma_x^{d_x}(A^{d_x})^\top
R^{-1/2}
\right|.
\end{equation*}

\subsubsection{Dependence of \texorpdfstring{$K_2^{d_x}$}{K2(dx)} on the dimensions \texorpdfstring{$d_x$}{dx} and \texorpdfstring{$d_y$}{dy}}

Let us define the matrix
\begin{equation*}
F^{d_x}
=
R^{-1/2}
A^{d_x}\Sigma_x^{d_x}(A^{d_x})^\top
R^{-1/2}.
\end{equation*}
Then,
\begin{equation*}
K_2^{d_x}
=
|I_{d_y}+F^{d_x}|.
\end{equation*}
Since \(F^{d_x}\) is positive semidefinite, all its eigenvalues satisfy
\[
\lambda_i(d_x,d_y)\ge 0,
\qquad
i=1,\ldots,d_y,
\]
where the eigenvalues depend on both the state dimension \(d_x\) and the observation dimension \(d_y\). Hence, we obtain
\begin{equation}\label{eq:BK_lam}
K_2^{d_x}
=
\prod_{j=1}^{d_y}
\left(
1+\lambda_j(d_x,d_y)
\right).
\end{equation}
Moreover, the quantity in \eqref{eq:BK_lam} can be bounded in terms of the model parameters. Since \(\Sigma_x^{d_x}\) is positive semidefinite, we may write
\begin{equation*}
F^{d_x}
=
G^{d_x}
(G^{d_x})^\top,
\end{equation*}
where
\begin{equation*}
G^{d_x}
=
R^{-1/2}
A^{d_x}
(\Sigma_x^{d_x})^{1/2}
\in
\mathbb R^{d_y\times d_x}.
\end{equation*}
Therefore,
\begin{equation*}
\lambda_i(d_x,d_y)
=
\sigma_i^2
(
G^{d_x}
),
\qquad
i=1,\ldots,d_y.
\end{equation*}
Consequently, the eigenvalues depend on the model matrices through the singular values of the noise-normalized and prior-covariance-weighted observation matrix
\[
R^{-1/2}
A^{d_x}
(\Sigma_x^{d_x})^{1/2}.
\]
We can therefore write
\begin{equation}\label{K_2:sigma}
K_2^{d_x}
=
\prod_{i=1}^{d_y}
\left(
1+\sigma_i^2
(
G^{d_x}
)
\right).
\end{equation}

\begin{remark}
Assume that \(\sigma_i(G^{d_x}) \neq 0\) for all \(i=1,\ldots,d_y\). Taking the limit as \(d_y\to\infty\), we obtain
\begin{equation*}
\lim_{d_y\to \infty} K_2^{d_x}
=
\prod_{i=1}^{\infty}
\left(
1+\sigma_i^2
(
G^{d_x}
)
\right).
\end{equation*}
By Theorem 8.52 in \cite{apostol1958mathematical}, this infinite product converges if and only if
\begin{equation*}
\sum_{i=1}^{\infty}
\sigma_i^2
(
G^{d_x}
)
<
\infty.
\end{equation*}
Therefore, the Bochner constant is not necessarily unbounded as the observation dimension \(d_y\to\infty\). A uniform error bound may still be obtained provided the singular values associated with the model parameters are suitably controlled.
\end{remark}

We now return to the setting in which \(d_y\) is fixed. Using the singular-value inequality
\begin{equation*}
\sigma_i(BC)
\le
\sigma_1(B)\sigma_i(C),
\end{equation*}
which holds for any pair of compatible matrices \(B\) and \(C\), we obtain
\begin{align*}
\sigma_i^2
(
G^{d_x}
)
&\le
\lambda_1(R^{-1})
\sigma_i^2(A^{d_x})
\lambda_1(\Sigma_x^{d_x}).
\end{align*}
Hence,
\begin{equation}\label{bound51}
\sigma_i^2
(G^{d_x})
\le
\frac{
\lambda_{1}(\Sigma_x^{d_x})
}{
\lambda_{d_y}(R)
}
\,
\sigma_i^2(A^{d_x}).
\end{equation}
Substituting \eqref{bound51} into \eqref{K_2:sigma} yields
\begin{equation}\label{K_2Up_Bound}
K_2^{d_x}
\le
\prod_{i=1}^{d_y}
\left(
1+
\frac{
\lambda_{1}(\Sigma_x^{d_x})
}{
\lambda_{d_y}(R)
}
\sigma_i^2(A^{d_x})
\right)
\le
\left(
1+
\frac{
\lambda_{1}(\Sigma_x^{d_x})
}{
\lambda_{d_y}(R)
}
\sigma_1^2(A^{d_x})
\right)^{d_y}.
\end{equation}
Therefore, the dependence of the Bochner constant on the state dimension \(d_x\) is determined by the growth of the maximal eigenvalue of \(\Sigma_x^{d_x}\) and the maximal singular value of \(A^{d_x}\).

\begin{remark}
Assume that there exist polynomials \(P_{n_A}(d_x)\) and \(P_{n_\Sigma}(d_x)\), of degrees \(n_A,n_\Sigma\in\mathbb N\), such that
\[
\sigma_1(A^{d_x}) \le P_{n_A}(d_x),
\qquad
\lambda_1(\Sigma_x^{d_x}) \le P_{n_\Sigma}(d_x).
\]
Then,
\[
K_2^{d_x} \le P_n(d_x),
\]
where \(P_n\) is a polynomial of degree
\[
n=(2n_A+n_\Sigma)d_y.
\]
Hence, if the maximal singular value of the observation operator and the maximal eigenvalue of the prior covariance matrix grow at most polynomially in \(d_x\), then the Bochner constant \(K_2^{d_x}\) also grows at most polynomially satisfying the assumptions in Theorem \ref{thIS2R}.

Moreover, if
\beq\label{Unif_Cst_LG}
C_A:=\sup_{d_x \in \mathbb{N}} \sigma_1(A^{d_x}) < \infty,
\qquad
C_\Sigma:=\sup_{d_x \in \mathbb{N}} \lambda_1(\Sigma_x^{d_x}) < \infty,
\eeq
then there exists a constant \(\mathcal{K}_2 < \infty\), independent of \(d_x\), such that
\[
K_2^{d_x} \le \mathcal{K}_2,
\]
with
\beq\label{eq:uniform_LG}
\mathcal{K}_2
:=
\left(
1 +
\frac{C_\Sigma \, C_A^2}{\lambda_{d_y}(R)}
\right)^{d_y}.
\eeq
In particular, under these uniform boundedness assumptions, the conditions of Corollary~\ref{cor:IS_uniform} are satisfied.
A numerical example illustrating polynomial and uniform error bounds w.r.t. the state dimension \(d_x\) is provided in Appendix~\ref{ap:Numerical_Example}.
\end{remark}

As discussed above, the key quantity governing the dependence of
the error bounds on the state dimension $d_x$ is the moment condition
\eqref{eq:BI_as_Ex}. However, verifying this condition directly is generally difficult in
nonlinear or non-Gaussian models, since it involves high-dimensional
integrals. In the following subsection, we show that, under suitable pointwise
domination assumptions on the likelihood, the condition can instead be
verified through simple bounds. These assumptions yield explicit
estimates for the second moment of the link function  required in
Theorem~\ref{thIS2R}.


\subsection{Pointwise Bounded Likelihood}\label{sub:BI_Noise}
Assume that the likelihood function $g_y^{d_x}(x)$ admits a product-form pointwise bound 
\begin{equation}\label{gy_hk}
g_y^{d_x}(x) \le m^{d_x}(x) q(y),
\end{equation}
where $m^{d_x}: \mX \to [0, \infty)$ and $q: \mathcal{Y} \to [0, \infty)$ are measurable functions satisfying
\begin{equation}\label{eq__HK}
\|m^{d_x}\|_{\infty} \le M(d_x) < \infty \quad \text{and} \quad \|\, q \,\|_{L^1(\mathcal{Y})} \le Q < \infty,
\end{equation}
for constants $M(d_x)$ and $Q$. Specifically note that $\|m^{d_x}\|_\infty$ may depend on the dimension $d_x$ but the upper bound $Q$ does not. Under conditions \eqref{gy_hk}, we readily see from Remark \ref{rem:link_norm} that  the squared norm of the link function $\ell_y^{\, d_x}$ satisfies
\begin{equation}\label{eq__l}
\| \ell_y^{\, d_x} \|_{L^2(\pi_0)}^2 \le q(y) \frac{ \int_{\mX} m^{d_x}(x) g_y^{d_x}(x) \pi_0(\mathrm{d}x) }{ \left[ \int_{\mX} g_y^{d_x}(x) \pi_0(\mathrm{d}x) \right]^2 }
\end{equation}
for any $y \in \mathcal{Y}$. By treating the observation $Y$ as a r.v. and taking the expectation on both sides of \eqref{eq__l}, we obtain
\begin{align*}
\mathbb{E} \left[ \| \ell_Y^{\, d_x} \|_{L^2(\pi_0)}^2 \right] 
&\le \int_{\mathcal{Y}} q(y) \frac{ \int_{\mX} m^{d_x}(x) g_y^{d_x}(x) \pi_0(\mathrm{d}x) }{ \int_{\mX} g_y^{d_x}(x) \pi_0(\mathrm{d}x) } \, \mathrm{d}y \\
&= \int_{\mathcal{Y}} q(y) \pi_y(m^{d_x}) \, \mathrm{d}y \\
&\le M(d_x) \int_{\mathcal{Y}} q(y) \, \mathrm{d}y = M(d_x) Q =: K_2^{d_x} < \infty,
\end{align*}
where the first inequality follows from the construction of the marginal law of $Y$ (see Eq. \eqref{eq:eta}) and then we simply apply the bounds $\|m^{d_x}\|_{\infty} \leq M(d_x)$ in \eqref{eq__HK}.

Consequently, if \eqref{gy_hk} and \eqref{eq__HK} hold, the second moment is finite, implying that the link function $\ell^{\, d_x} \in L^2(\mathcal{Y}; L^2(\pi_0))$. Furthermore, if there exists a uniform constant $M$ such that  $M(d_x) \le M$ for all $d_x \in \mathbb{N}$, then the requirements of Corollary~\ref{cor:IS_uniform} are satisfied for the class of models described in  Section \ref{sub:BI_Noise}.

The second-moment condition \eqref{eq:BI_as_Ex} generally involves
high-dimensional integrals, which may be difficult to verify directly in
practical models. In the following subsection, we bridge the gap between
this abstract functional-analytic condition and concrete Bayesian
modeling by deriving a tractable criterion for the integrability of the
link function. 
Specifically, we consider likelihoods belonging to the class of
elliptically symmetric families \cite{delmas2024elliptically}, which
include a wide range of noise models such as Gaussian, Laplace,
Student-$t$, and Cauchy distributions. For this class of models, we show
that, when the observation function is bounded, the Bochner
integrability condition reduces to the finiteness of a one-dimensional
radial integral.
This reduction enables explicit analytical constructions showing that
the approximation error grows at most polynomially with the state
dimension $d_x$. In particular, we identify scenarios in which the
polynomial degree is zero, yielding error bounds that hold uniformly in
$d_x$.


\subsection[Uniformly bounded observation function]{Uniformly bounded observation function}\label{sub:Uniformly_Bounded_Observation}

Let us consider the observation model
\begin{equation*}
Y = h^{d_x}(X) + V,
\end{equation*}
where $h^{dx}:\mX\to\mY$ is a (generally nonlinear) deterministic function
and $V$ is an observation noise term modeling sensor and measurement errors.
We assume that $V$ is independent of $X$ and admits a density on $\mY$.
Conditionally on $X=x$, the observation $Y$ then admits a density
\(
g^{d_x}(y\mid x).
\)

\noindent
A broad class of observation likelihoods can be represented in the
\emph{radial--shift form}
\begin{equation}
\label{eq:general-radial-likelihood-fixed}
g^{d_x}(y\mid x)
=
\phi\!\big(\psi(\|y-h^{d_x}(x)\|_{R})\big),
\end{equation}
where $\phi:\mbR\to(0,\infty)$ is non-increasing,
$\psi:(0,\infty)\to\mbR$ is non-decreasing, and
$\|\cdot\|_{R}$ denotes a Mahalanobis norm on $\mY$ defined below.

The discrepancy between an observation $y$ and its state-dependent
prediction $h^{d_x}(x)$ is measured through the quadratic form
\begin{equation*}
(y-h^{d_x}(x))^\top R (y-h^{d_x}(x)),
\end{equation*}
where $R\in\mbR^{d_y\times d_y}$ is a fixed symmetric positive definite
matrix independent of the state. This quadratic form induces the
Mahalanobis inner product
\[
\langle y_1,y_2\rangle_R := y_1^\top R y_2,
\]
and the associated norm
\[
\|y\|_R := \sqrt{\langle y,y\rangle_R},
\qquad y\in\mbR^{d_y}.
\]

Representation \eqref{eq:general-radial-likelihood-fixed} separates two
modeling components:
\begin{itemize}
\item the \emph{geometric structure}, determined by the Mahalanobis metric
induced by $R$ together with the translation $h^{d_x}(x)$, and
\item the \emph{radial profile} $\phi\circ\psi$, which governs the tail
behaviour of the likelihood.
\end{itemize}

Conditional densities of the form
\eqref{eq:general-radial-likelihood-fixed} yield the class
of \emph{elliptically symmetric likelihoods}.

\begin{remark}\label{rem:Colour_noises}
 All the noise models listed in Tables A and B  fit into the elliptically symmetric  likelihood family describe by \eqref{eq:general-radial-likelihood-fixed}.

\noindent \begin{table}[H]
\centering
\begin{tabular}{ll}
\hline
\textbf{Noise model} & \textbf{Radial deformation $\psi(r)$} \\
\hline
Gaussian 
& $\tfrac12\,r^2$ \\[0.4em]

Generalized Gaussian / exponential power 
& $r^\beta,\qquad \beta>0$ \\[0.4em]

Laplace 
& $r$ \\[0.4em]

Sub-Gaussian 
& $a\,r^2,\qquad a>0$ \\[0.4em]
\hline
\end{tabular}
\caption{\textbf{Table A.} Exponential--type radial profile: 
$\phi(s)=\mathrm{C}\,\mathrm{e}^{-s}$, where $\mathrm{C}$ is a normalization constant.}
\label{tab:radial_profiles}
\end{table}

\begin{table}[h]
\centering
\begin{tabular}{lll}
\hline
\textbf{Noise model} 
& \textbf{Radial deformation $\psi(r)$} 
& \textbf{Tail parameter $\alpha$} \\
\hline
Student--$t$ $(\nu>0)$ 
& $1+\tfrac{1}{\nu}r^2$ 
& $\tfrac{\nu+d_y}{2}$ \\[0.4em]

Cauchy 
& $1+r^2$ 
& $\tfrac{d_y+1}{2}$ \\[0.4em]

Pearson type VII $(\lambda>0)$
& $1+\tfrac{1}{\lambda}r^2$ 
& $\alpha>0$ \\[0.4em]

Generalized Cauchy / rational quadratic 
& $1+r^p,\qquad p\in\mathbb{N}$ 
& $\alpha>0$ \\[0.4em]
\hline
\end{tabular}
\caption{\textbf{Table B.} Polynomial (heavy--tailed) radial profile: 
$\phi(s)=\mathrm{C}\,s^{-\alpha}$, where $\mathrm{C}$ is a normalization constant.}
\label{tab:polynomial_profiles}
\end{table}
\end{remark}

\noindent 
Consider a family of models $\{\mathcal{M}^{d_x}\}_{d_x \in \mathbb{N}}$ indexed by the state dimension $d_x$, where each model is of the form $\mathcal{M}^{d_x}=\{\pi_0, g_y^{d_x}\}$ and $g_y^{d_x}(x)$ belongs to the class of elliptically symmetric likelihoods given by \eqref{eq:general-radial-likelihood-fixed}. Hereafter, we assume that the observation map $h^{d_x} : \mX\subseteq \mbR^{d_x} \to \mY$ is
uniformly bounded  w.r.t. the Euclidean norm on $\mY$ by a function $M:\mbN \mapsto [0,\infty)$, of the state dimension $d_x$, namely
\begin{equation}
\label{eq:MR-def}
M(d_x):= \sup_{x \in \mX} \|h^{d_x}(x)\| < \infty.
\end{equation}
Under this assumption, it is straightforward to show  (see Appendix~\ref{sub:Mahalanobis-geo}) that 
\begin{equation*}
M_R^{d_x} := \sup_{x \in \mX} \|h^{d_x}(x)\|_R \le \sqrt{\lambda_1(R)}\, M(d_x).
\end{equation*}
Let us denote
\begin{equation*}
\mathcal{S}_R^{d_y} :=
|R|^{1/2}\, \mathbb{S}^{d_y-1} =
|R|^{1/2}\, \frac{2 \pi^{d_y/2}}{\Gamma(d_y/2)},
\end{equation*}
$\mathbb{S}^{d_y-1}$ is the surface area of the $d_y$--unit sphere in
$\mbR^{d_y}$, and $\Gamma(\cdot)$ is the Gamma function. 

\noindent The following result reduces the high-dimensional Bochner integrability condition for the link function \(\ell^{\,d_x}\), given in \eqref{eq:BI_as_Ex}, to the finiteness of a one-dimensional radial integral.

\begin{theorem}\label{thm:Colour_Noises}
Let the observation likelihood $g^{d_x}(y\mid x)$ belong to the class of elliptically symmetric  families defined by \eqref{eq:general-radial-likelihood-fixed}. Assume that the observation map $h^{d_x}:\mX\mapsto\mathcal{Y}$ satisfies \eqref{eq:MR-def}. Then the second moment of the link function $\ell_Y^{\, d_x}$ is  bounded by a one-dimensional radial integral, namely,
\begin{equation}
\label{eq:final_uniform_integral}
\mathbb{E} \left[ \| \ell_Y^{\, d_x} \|_{L^2(\pi_0)}^2 \right] \le \mathcal{S}_R^{d_y} \int_0^\infty \frac{\phi^2 \big( \psi(r) \big)}{\phi \big( \psi(r + 2M_R^{d_x}) \big)} \, r^{d_y-1} \, \mathrm{d}r.
\end{equation}
\end{theorem}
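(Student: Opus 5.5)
Starting from the identity \eqref{eq:ratio}, I would write
\[
\mathbb{E}\bigl[\|\ell_Y^{\,d_x}\|^2_{L^2(\pi_0)}\bigr]=\int_{\mX}\int_{\mY}\frac{g_y^{d_x}(x)^2}{\pi_0(g_y^{d_x})}\,\sd y\,\pi_0(\sd x).
\]
The plan is to bound the denominator from below uniformly in the prior, using the boundedness of $h^{d_x}$, and then integrate in $y$ by a radial change of variables.

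\textbf{Step 1 (lower bound on the evidence).} Fix $x\in\mX$ and $y\in\mY$. For any $x'\in\mX$, the triangle inequality for the Mahalanobis norm gives
\[
\|y-h^{d_x}(x')\|_R\le \|y-h^{d_x}(x)\|_R+\|h^{d_x}(x)\|_R+\|h^{d_x}(x')\|_R\le \|y-h^{d_x}(x)\|_R+2M_R^{d_x}.
\]
Since $\psi$ is non-decreasing and $\phi$ is non-increasing, $\phi\circ\psi$ is non-increasing. Hence
\[
g_y^{d_x}(x')\ge \phi\bigl(\psi(\|y-h^{d_x}(x)\|_R+2M_R^{d_x})\bigr)
\]
for every $x'$. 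Integrating against $\pi_0(\sd x')$, a probability measure, gives $\pi_0(g_y^{d_x})\ge \phi(\psi(r_x(y)+2M_R^{d_x}))$, where $r_x(y):=\|y-h^{d_x}(x)\|_R$. Positivity of $\phi$ ensures this lower bound is strictly positive.

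\textbf{Step 2 (pointwise bound on the integrand).} Plugging Step 1 into the inner integral gives, for each fixed $x$,
\[
\int_{\mY}\frac{g_y^{d_x}(x)^2}{\pi_0(g_y^{d_x})}\,\sd y\le\int_{\mY}\frac{\phi^2(\psi(r_x(y)))}{\phi(\psi(r_x(y)+2M_R^{d_x}))}\,\sd y.
\]
Tonelli's theorem justifies the exchange of integrals, since everything is nonnegative.

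\textbf{Step 3 (radial reduction).} I substitute $u=y-h^{d_x}(x)$, which removes the $x$-dependence. (If $\mY$ is a proper subset, I enlarge the domain to $\mbR^{d_y}$, which only increases the integral of a nonnegative function.) I then write $u=R^{-1/2}v$, so that $\|u\|_R=\|v\|$, and the Jacobian is a determinant factor of $R$. Finally I pass to polar coordinates, $\sd v=r^{d_y-1}\,\sd r\,\sd\sigma$, which produces the surface-area factor $\mathbb{S}^{d_y-1}$. The resulting bound is independent of $x$, so integrating over $\pi_0$ leaves it unchanged, and this yields \eqref{eq:final_uniform_integral}.

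\textbf{Main obstacle.} The only delicate point is bookkeeping of the determinant constant so that it matches $\mathcal{S}_R^{d_y}=|R|^{1/2}\,\mathbb{S}^{d_y-1}$. With $\|u\|_R^2=u^\top Ru$, the substitution actually gives the Jacobian $|R|^{-1/2}$. So either the paper's convention differs (for instance the quadratic form uses $R^{-1}$, as in the Gaussian case), or the stated constant is an upper bound valid under some normalization. I would check this against Appendix~\ref{sub:Mahalanobis-geo} and adopt the convention in which $|R|^{1/2}$ appears. The monotonicity step is straightforward; the rest is routine.
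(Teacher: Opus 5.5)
Your argument is essentially the paper's. In both, the evidence $\pi_0(g_y^{d_x})$ is bounded below using monotonicity of $\phi\circ\psi$ and the triangle inequality, then $y\mapsto y-h^{d_x}(x)$ is shifted, then polar coordinates are used. The only difference is cosmetic. You center the lower bound at $h^{d_x}(x)$ and get $2M_R^{d_x}$ in one step. The paper first uses $\pi_0(g_y^{d_x})\ge\phi(\psi(\|y\|_R+M_R^{d_x}))$ and applies a second triangle inequality after the shift. Both reach the same integrand.

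Your Jacobian bookkeeping is right and the paper's is not. With the paper's own convention $\|y\|_R=\|R^{1/2}y\|$, for nonnegative $k$ one gets $\int_{\mbR^{d_y}}k(\|y\|_R)\,\sd y=|R|^{-1/2}\,\mathbb{S}^{d_y-1}\int_0^\infty k(r)\,r^{d_y-1}\,\sd r$. The paper's proof simply asserts the factor $|R|^{1/2}$.

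This is not harmless. Take $h^{d_x}\equiv 0$. Then $\ell_y^{\,d_x}\equiv 1$, so the left side of \eqref{eq:final_uniform_integral} equals $1$. By the normalization $\int\phi(\psi(\|y\|_R))\,\sd y=1$, the right side as stated equals $|R|$. So the statement as written fails whenever $|R|<1$. For $|R|\ge 1$ it does follow from the corrected bound, since $|R|^{-1/2}\le|R|^{1/2}$.

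So do not silently pick the convention that matches the stated constant; state the fix explicitly. You have two options:
\begin{itemize}
\item keep the paper's norm and set $\mathcal{S}_R^{d_y}:=|R|^{-1/2}\,\mathbb{S}^{d_y-1}$;
\item switch to $\|y\|_R^2=y^\top R^{-1}y$, the covariance convention, under which your proof gives exactly the stated constant.
\end{itemize}
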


A proof is provided in Appendix~\ref{sec:thm:Colour_Noises} (see also Appendix \ref{sub:Mahalanobis-geo}).

\begin{corollary}\label{cor:Colour_Noises}
Let the assumptions of Theorem~\ref{thm:Colour_Noises} hold. For the observation noise models listed in Tables A and B, the integral on the right hand side of \eqref{eq:final_uniform_integral} is finite. Therefore
\[
\mathbb{E} \left[ \| \ell_Y^{\, d_x} \|_{L^2(\pi_0)}^2 \right] = K_2^{d_x} < \infty.
\]
Hence $\ell^{\, d_x}$ is Bochner $2$-integrable for this class of observation models.
Furthermore, we have the following:
\begin{enumerate}
    \item If there exists a uniform bound $M < \infty$, such that  $\sup_{d_x \in \mbN} M(d_x) \le M$, then the conditions of Corollary~\ref{cor:IS_uniform} are satisfied for all observation models in Tables A and B.
    \item For the polynomial radial profiles in Table B, if $M(d_x) \le P(d_x)$ for some polynomial $P$ of $d_x$, then $K_2^{d_x}$ satisfies \eqref{eq:K_p_leq_P}, thereby fulfilling the requirements of Theorem~\ref{thIS2R}.
\end{enumerate}
\end{corollary}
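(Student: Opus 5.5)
The plan is to start from the bound of Theorem~\ref{thm:Colour_Noises} and verify, one table at a time, that the radial integral
\[
I(M_R^{d_x}) := \int_0^\infty \frac{\phi^2(\psi(r))}{\phi(\psi(r+2M_R^{d_x}))}\, r^{d_y-1}\,\sd r
\]
is finite. Write $c := 2M_R^{d_x} \le 2\sqrt{\lambda_1(R)}\,M(d_x)$. Throughout I will use the monotonicity of $\phi$ and $\psi$, and that $\phi(\psi(\cdot))$ is an integrable radial density profile, i.e. $\int_0^\infty \phi(\psi(r))\, r^{d_y-1}\,\sd r<\infty$. The latter holds because $g^{d_x}(\cdot\mid x)$ is a normalised pdf on $\mY$. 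The ratio $\phi^2(\psi(r))/\phi(\psi(r+c))$ is then controlled by comparing $\phi(\psi(r))$ with $\phi(\psi(r+c))$.

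For Table A, $\phi(s)=\mathrm{C}e^{-s}$, so the integrand equals $\mathrm{C}\exp\big(-2\psi(r)+\psi(r+c)\big)\,r^{d_y-1}$. For $\psi(r)=ar^\beta$ I will split into two ranges.
\begin{itemize}
\item For $\beta\le 1$, subadditivity gives $(r+c)^\beta\le r^\beta+c^\beta$, so the exponent is at most $-ar^\beta+ac^\beta$ and the integral is bounded by $e^{ac^\beta}\int_0^\infty e^{-ar^\beta}r^{d_y-1}\,\sd r<\infty$.
\item For $\beta>1$, I will use $(r+c)^\beta\le 2^{\beta-1}(r^\beta+c^\beta)$. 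This suffices only when $2^{\beta-1}<2$. For general $\beta$ I will instead note that $(r+c)^\beta/r^\beta\to1$ as $r\to\infty$, so for $r\ge r_0(c)$ one has $(r+c)^\beta\le \tfrac32 r^\beta$. The exponent is then at most $-\tfrac a2 r^\beta$, while the compact piece $[0,r_0]$ is trivially finite.
\end{itemize}
The Gaussian, Laplace and sub-Gaussian rows are special cases of this. In every case the bound is finite and depends on $d_x$ only through $c$, and it is increasing in $c$. Hence a uniform bound $M(d_x)\le M$ gives $\sup_{d_x}K_2^{d_x}\le \mathcal{S}_R^{d_y} I(2\sqrt{\lambda_1(R)}M)=:\mathcal{K}_2$, which is item~1 for Table A via Corollary~\ref{cor:IS_uniform}.

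For Table B, $\phi(s)=\mathrm{C}s^{-\alpha}$, so the integrand is $\mathrm{C}\,\psi(r+c)^{\alpha}\,\psi(r)^{-2\alpha}\,r^{d_y-1}$ with $\psi(r)=1+\kappa r^p$ ($p=2$ for Student-$t$, Cauchy and Pearson~VII). I will use $(r+c)^p\le 2^{p-1}(r^p+c^p)$, which gives
\[
\psi(r+c)\le 2^{p-1}(1+\kappa c^p)\,\psi(r),
\]
since $1+\kappa 2^{p-1}(r^p+c^p)\le 2^{p-1}(1+\kappa c^p)(1+\kappa r^p)$. The integrand is therefore at most $\mathrm{C}\,2^{(p-1)\alpha}(1+\kappa c^p)^\alpha\,\psi(r)^{-\alpha}r^{d_y-1}$. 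Finiteness of $\int\psi(r)^{-\alpha}r^{d_y-1}\,\sd r$ is exactly normalisability of the density, i.e. $p\alpha>d_y$; this holds for Student-$t$ and Cauchy with the listed $\alpha$ and is implicit for the other rows. Thus
\[
K_2^{d_x}\le \mathcal{S}_R^{d_y}\,\mathrm{C}\,2^{(p-1)\alpha}\big(1+\kappa(2\sqrt{\lambda_1(R)})^pM(d_x)^p\big)^{\alpha}\int_0^\infty\psi(r)^{-\alpha}r^{d_y-1}\,\sd r.
\]
If $M(d_x)\le P(d_x)$, the right-hand side is bounded by a polynomial in $d_x$ of degree $\lceil \alpha\rceil p\deg P$ (bounding $(1+t)^\alpha\le(1+t)^{\lceil\alpha\rceil}$), which gives \eqref{eq:K_p_leq_P} and hence item~2 through Theorem~\ref{thIS2R}. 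A uniform $M$ gives item~1 for Table B. Bochner $2$-integrability follows from \eqref{eq:BI_as_Ex}.

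The main obstacle is the Table A case with $\beta>1$. There the dependence on $c$ is only through $r_0(c)$ and a factor like $e^{O(c^\beta)}$, so only finiteness and uniformity are claimed, not polynomial growth; this is consistent with item~2 being restricted to Table B. I will make that step careful by bounding $\psi(r+c)-\psi(r)\le a\beta c(r+c)^{\beta-1}$ via the mean value theorem and absorbing it into $-a r^\beta$ for large $r$.
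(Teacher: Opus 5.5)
Your proposal is correct and follows essentially the same route as the paper. You bound $K_2^{d_x}$ by the radial integral of Theorem~\ref{thm:Colour_Noises}, split the exponential profiles into a compact piece and a tail where the exponent is at most $-\tfrac a2 r^\beta$, and for the polynomial profiles use $\psi(r+c)\le 2^{p-1}(1+\kappa c^p)\psi(r)$ together with normalisation of the density. Your $c$-dependent radius $r_0(c)$ is more careful than the paper's fixed split at $r=4M_R^{d_x}$, which fails for $\beta>1$ (e.g.\ the Gaussian row, since $(3/2)^\beta>3/2$), and you also cover the generalized Cauchy rows with $p\neq 2$ and obtain uniformity cleanly from the monotonicity of the radial integral in $c$.
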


A proof is provided in Appendix~\ref{sec:cor:Colour_Noises}.

\medskip
\noindent 
\subsubsection{Example: Polynomial growth of $M(d_x)$} 
To illustrate the asymptotic behavior characterized in Corollary~\ref{cor:Colour_Noises}, we consider a high-dimensional Bayesian inverse problem on a state space $\mX \subseteq \mbR^{d_x}$. Let $\mathbf{x} = (x_1, \dots, x_{d_x})^\top \in \mX$. We define the observation map $h^{d_x}: \mX \mapsto \mathcal{Y}$ through a system of $d_y$ observation channels. For each channel $i \in \{1, \dots, d_y\}$, the measurement is given by
\begin{equation*}
    h_i^{d_x}(\mathbf{x}) = \sum_{j=1}^{d_x} a_{ij} \sigma(x_j),
\end{equation*}
where $a_{ij} \in \mbR$ are coupling coefficients and $\sigma: \mbR \to \mbR$ is a saturating nonlinearity (e.g., $\sigma(x) = \tanh(x)$ or $\sigma(x) = \operatorname{erf}(x)$). We introduce the following assumptions to characterize the operator's growth:

\begin{enumerate}
    \item Uniform local saturation: The nonlinearity $\sigma$ is globally bounded, i.e., there exists $M_\sigma > 0$, independent of the dimension $d_x$, such that  $\|\sigma\|_\infty \le M_\sigma$. 
    \item Sensitivity uniformity: The coupling coefficients are bounded such that  $|a_{ij}| \le A$ for some $A > 0$ for all $i, j$ and all $d_x$.
\end{enumerate}

The supremum of the observation magnitude, $M(d_x) := \sup_{\mathbf{x} \in \mX} \|h^{d_x}(\mathbf{x})\|$, characterizes the maximum signal energy the system captures as a function of the state dimension $d_x$. Under the assumptions of local saturation and uniform sensitivity, the forward map satisfies
\begin{equation*}
    M(d_x) \le \sqrt{d_y} A M_\sigma d_x.
\end{equation*}
Indeed, by the triangle inequality and the uniform bound on $\sigma$, the $i$-th observation channel satisfies
\begin{equation*}
    |h_i^{d_x}(\mathbf{x})| \le \sum_{j=1}^{d_x} |a_{ij}| |\sigma(x_j)| \le \sum_{j=1}^{d_x} A M_\sigma = A M_\sigma d_x.
\end{equation*}
Taking the Euclidean norm of the vector $h^{d_x}(\mathbf{x}) = (h_1^{d_x}(\mathbf{x}), \dots, h_{d_y}^{d_x}(\mathbf{x}))^\top$, we obtain
\begin{equation*}
    \|h^{d_x}(\mathbf{x})\| = \left( \sum_{i=1}^{d_y} |h_i^{d_x}(\mathbf{x})|^2 \right)^{1/2} \le \left( \sum_{i=1}^{d_y} (A M_\sigma d_x)^2 \right)^{1/2} = \sqrt{d_y} A M_\sigma d_x.
\end{equation*}
Since this bound holds for all $\mathbf{x} \in \mX$, the result follows. This formulation ensures that for any finite dimension $d_x$, the observation operator remains bounded on the domain $\mX$. Notably, the growth of the observation map satisfies $M(d_x) \le P(d_x) := a_1 d_x$ for $a_1 = \sqrt{d_y} A M_\sigma > 0$, then $P$ is a polynomial of degree 1 in the state dimension. Consequently, for any observation model $Y = h^{d_x}(X) + V$ where the noise $V$ admits a polynomial radial profile as described in Remark~\ref{rem:Colour_noises} $M(d_x)\propto d_x$ grows linearly and satisfies the conditions of Corollary~\ref{cor:Colour_Noises}~(item 2). This ensures that the requirements of Theorem~\ref{thIS2R} are fulfilled, guaranteeing that the error of the IS estimator $\pi_Y^N$ scales at most polynomially (in fact, linearly) with the state dimension $d_x$.


\section{Comparison with some existing results}\label{sec:Comparison}

The performance of IS estimators has been studied by several authors. Here we present a brief comparison of the results introduced in Sections \ref{sec:General_Random} and \ref{sec:Special_cases} with the analysis of \cite{chatterjee2018sample} and \cite{agapiou2017importance}, respectively. This is certainly not exhaustive, but we believe these references are significant and representative of the state of the art.


\subsection{Arbitrary target distribution $\pi$}

In \cite{chatterjee2018sample}, Chatterjee and Diaconis present a performance analysis of a generic importance sampler with target distribution $\pi$ and given proposal $\nu$, with $\pi \ll \nu$ and 
$$
{\rm KL}(\pi,\nu) := \int \log\left(\frac{\sd \pi}{\sd \nu}\right) \sd \pi < \infty,
$$
where KL stands for the Kullback-Leibler divergence. The estimator of the target measure $\pi$ is constructed as
$$
\widehat \pi^N(\sd x) = \sum_{i=1}^N w^i \delta_{x^i}(\sd x),
$$
where $x^i\sim\nu$ iid, for $i=1, \ldots, N$,
\beq
 w^i = \frac{
    \rho(x^i)
}{
    \sum_{j=1}^N \rho(x^j)
} \quad
\text{and}
\quad
\rho(x) \propto \frac{\sd \pi}{\sd \nu}(x).
\nn
\eeq
Under these minimal assumptions, Chatterjee and Diaconis prove that, for any test function $f \in L^2(\pi)$ and denoting $D:={\rm KL}(\pi,\nu)$,
\beq
\text{if $\log N = D + o(D)$ then}~~
\mbP\left(
    \left|
        \widehat \pi^N(f) - \pi(f)
    \right| \ge \frac{
        2\| f \|_{L^2(\pi)} \varepsilon_D
    }{
        1- \varepsilon_D
    }
\right) \le 2\varepsilon_D,
\nn
\eeq
where (under a mild assumption on the tails of $\rho(x)$) $\lim_{D\to\infty} \varepsilon_D=0$, i.e., 
$$
\text{if $\log N = D + o(D)$ then $\lim_{D\to\infty} \widehat \pi^N = \pi$ in probability.}
$$
They also show that taking $\log N = D - o(D)$ can lead to significant errors with non-negligible probability for a certain choice of $f \in L^2(\pi)$. Therefore,
\beq
\widehat\pi^N \to \pi~~\text{in probability}
~~\Leftrightarrow~~
N \asymp \exp\left\{
    {\rm KL}( \pi, \nu )    
\right\}.
\label{eqIFF}
\eeq
If the KL divergence between the target $\pi$ and the proposal $\nu$ scales linearly with the state dimension $d_x$ (as it is the case if, e.g., $\pi$ and $\nu$ have independent marginals) then expression \eqref{eqIFF} implies that
\beq
\widehat\pi^N \to \pi~~\text{in probability}
~~\Leftrightarrow~~
N \propto \exp\left\{
    d_x   
\right\}.
\label{eqIFF2}
\eeq
Expression \eqref{eqIFF2} may seem at odds with Theorem \ref{thIS2R}, which states that (under regularity assumptions),
\beq
\| \pi_Y^N(f) - \pi_Y(f) \|_2 \le \frac{P_{n,2}(d_x) \|f\|_\infty}{\sqrt{N}},
\label{eqReminder}
\eeq
where $P_{n,2}(\cdot)$ is a polynomial of degree $n$. Expressions \eqref{eqIFF2} and \eqref{eqReminder} can be reconciled if we recall that:
\begin{itemize}
\item[(i)] the target distribution $\pi_Y$ in our analysis is not arbitrary, but the posterior distribution in a Bayesian inference problem, hence $\pi_Y(\sd x) \propto g_Y^{d_x}(x)\pi_0(\sd x)$, and
\item[(ii)] $g_Y^{d_x}$ is a family of likelihood functions indexed by the random observation $Y$ (i.e., a function-valued r.v.).
\end{itemize}
The analysis of \cite{chatterjee2018sample} is carried out for a given (almost arbitrary) relative density $\rho(x)=\frac{\sd \pi}{\sd \nu}(x)$ and the goal is to determine the computational cost (value of $N$) needed to guarantee ``small errors'' without further assumptions. In this paper, instead, we work with a weight function $\propto g_Y^{d_x}(x)$ which is random and endowed with considerable structure, and the goal is to determine the regularity conditions that guarantee ``small {\em average} errors'' with a computational cost that depends at most polynomially on $d_x$. Theorem \ref{thIS2R} provides such regularity conditions in an abstract setting, while in Section \ref{sec:Special_cases} we describe a few concrete examples, i.e., specific constructions of the likelihood $g_Y^{d_x}$ that yield small average errors with non-exponential computational cost. 

\subsection{Importance sampling for Bayesian inference}

Agapiou {\em et al.}~\cite{agapiou2017importance} analyse the performance of
importance sampling  for Bayesian inference in the linear--Gaussian
in close analogy with the framework developed in Section~\ref{sec:Linear_Gaussian}. Their framework
considers a fixed observation $Y=y$ and targets the posterior
$\pi_y(\sd x)$ defined in \eqref{eqDef_piy}. The IS algorithm coincides with
Algorithm~\ref{A2}: particles are drawn independently from the prior
$x^i \sim \pi_0$, $i=1,\ldots,N$, and the posterior is approximated by
$\pi_y^N$ in \eqref{eqDef_piyN}.

\noindent A central quantity in~\cite{agapiou2017importance} is
\[
    \rho_y
    :=
    \frac{\pi_0(g_y^2)}{\pi_0(g_y)^2},
    \label{eq:def_rho_y}
\]
which is the second moment of the normalised importance weights.
Equivalently,
\[
\rho_y = 1 + \chi^2(\pi_y,\pi_0),
\]
so that $\rho_y$ measures the $\chi^2$-divergence between posterior and
prior. Using the link-function notation, by
Remark~\ref{rem:link_norm}, we have
\begin{equation*}
\rho_y
=
\|\ell_y\|_{L^2(\pi_0)}^2 .
\end{equation*}
One main result of~\cite{agapiou2017importance} yields the conditional MSE
bound
\begin{equation}
\sup_{\|\phi\|_\infty \le 1}
\mathbb{E}\!\left[
    (\pi_y(\phi)-\pi_y^N(\phi))^2
\right]
\le
\frac{4}{N}\rho_y .
\label{eq:agapiou_mse}
\end{equation}
For the linear--Gaussian model, $\rho_y$ can be related to the
\emph{intrinsic dimension}, (see Section \ref{sec:Linear_Gaussian})
\[
\tau = \mathrm{trace}(C),
\qquad
C=\sqrt{\Sigma_x^{d_x}}\,[A^{d_x}]^\top R^{-1}A^{d_x}\,\sqrt{\Sigma_x^{d_x}},
\]
whose eigenvalues $\{\lambda_{C,i}\}_{i=1}^{d_x}$ satisfy
\[
\tau=\sum_{i=1}^{d_x}\lambda_{C,i},
\qquad
\log\rho_y
=
\mathrm{cst}(y)
+
\frac12
\sum_{i=1}^{d_x}\log(1+\lambda_{C,i}).
\]
Consequently,
\[
\rho_y \gtrsim \exp\!\Big\{\tfrac12 \tau\Big\},
\]
and therefore \eqref{eq:agapiou_mse} suggest an exponential growth of the
required number of particles
\[
\mathrm{MSE} \lesssim \frac{1}{N}\exp\!\Big\{\tfrac12\tau\Big\}.
\]
To achieve $\mathrm{MSE}\le\varepsilon$, one needs
$N \gtrsim \varepsilon^{-1}\exp\{\tau\}$.
Thus the computational cost can grow exponentially with the intrinsic
dimension.

\medskip

\noindent
\emph{Relation with the present work.}
The key difference in our analysis is that the observation $Y$ is kept
random. Instead of the conditional error in \eqref{eq:agapiou_mse}, we
control the averaged quantity
\[
\big\|
    \pi_Y(f)-\pi_Y^N(f)
\big\|_2.
\]
Conditionally on $\mathcal G=\sigma(Y)$, an argument analogous to
\eqref{eq:agapiou_mse} yields
\[
\mathbb{E}\!\left[
    |\pi_Y(f)-\pi_Y^N(f)|^2
    \,\middle|\,\mathcal G
\right]
\le
\frac{C\|f\|_\infty^2}{N}\rho_Y .
\]
However, instead of estimating \(\rho_Y\) pointwise in \(Y\), we integrate with respect to the law of \(Y\) and exploit structural properties of the model. In the linear--Gaussian setting, assuming the observation dimension \(d_y\) is fixed, Corollary~\ref{cor:IS_uniform}, together with \eqref{Unif_Cst_LG} and \eqref{eq:uniform_LG}, yields the uniform bound
\[
\|
    \pi_Y(f)-\pi_Y^N(f)
\|_2
\le
\left(
1 +
\frac{C_\Sigma \, C_A^2}{\lambda_{d_y}(R)}
\right)^{d_y}
\frac{B_p\|f\|_\infty}{\sqrt N}.
\]
All constants are independent of the state dimension $d_x$, and the model
dependence enters only through the observation noise covariance $R$ and
the observation dimension $d_y$.

\medskip

Hence, while the fixed-observation analysis suggests an exponential
dependence on the intrinsic dimension, averaging over the law of $Y$
allows us to recover the $N^{-1/2}$ convergence rate uniformly in the
state dimension.

\section{Conclusions} \label{sec:Discussion}

We have investigated the approximation errors of importance samplers for Bayesian inference in high-dimensional settings. The analysis is carried out in a probabilistic framework in which the observations are treated as r.v.s, and the posterior distribution is itself a random measure. This setting allows us to exploit the probabilistic structure of the model and to formulate expected error bounds, rather than conditioning on a fixed realization of the data and analyse the worst-case scenario.

Let $\pi_Y$ and $\pi_Y^{N}$ denote the true posterior distribution of a $d_x$-dimensional r.v. $X$ given an observation $Y$ and its  IS approximation, respectively. In full generality, classical upper bounds for the $L^2$ approximation error $\|\pi_y(f) - \pi_y^{N}(f)\|_2$, for a given realization $Y=y$ and for any bounded test functions $f$, may grow exponentially with dimension, in agreement with existing results for IS and related Monte Carlo schemes \cite{Bengtsson08,Snyder08,Snyder15,agapiou2017importance,chatterjee2018sample}.

In this paper, we have introduced a refined analysis, treating observations as r.v.s, we relate the approximation error $\|\pi_Y(f) - \pi_Y^{N}(f)\|_2$ to the Bochner integrability of a suitable link function $\ell^{\, d_x}$, which generates normalized likelihoods for the model. We have shown that the $L^2$ error bounds are finite and converge at the standard Monte Carlo rate $\mO(N^{-1/2})$, for arbitrarily large dimension, if and only if the link function is Bochner integrable. This result provides a precise and explicit characterization of when high dimensionality does not lead to a breakdown of the algorithm.

Moreover, the dependence of the error bounds on the dimension $d_x$ can be quantified in terms of the moments of the link function w.r.t. the marginal distribution of the observations. This perspective enables the identification of sufficient conditions under which the $L^1$ and $L^2$ error bounds grow only polynomially with dimension, as well as conditions under which they remain uniformly bounded. These results provide a rigorous explanation of how some families of models do not suffer from a curse of dimensionality.

We have illustrated the general theory through several examples. In the linear Gaussian setting, we make explicit how the Bochner $2$-integrability constant depends on the model parameters, and how its growth w.r.t. the state dimension $d_x$ is polynomial under regularity conditions. In particular, by appropriately bounding the singular values of the observation operator and the eigenvalues of the state covariance matrix, we show that it is possible to obtain error bounds that remain uniform regardless of the state dimension~$d_x$. We have also considered models with bounded or dominated observation function, conditions that are commonly satisfied in practical applications, for instance when physical sensors operate within a finite dynamic range. In all cases, we provide explicit analytical arguments showing how the structure of the model leads to favorable dimensional scaling of the average errors w.r.t. the observation distribution.



\begin{appendix}

\section{Use of importance functions} \label{sec:importance_sampling}
In this Appendix, we analyze the connection between Algorithm~\ref{A1} and Algorithm~\ref{A2}.

Recall the model described in Section \ref{ssec:BI} with prior measure $\pi_0$ and likelihood function $g_y$, and let us refer to it as model $\mM$. Then, choose an importance function $\pi_0 \ll \nu$ as in Algorithm \ref{A1}, and define the relative density
\begin{equation}
\rho(x) := \frac{\sd\pi_0}{\sd\nu}(x).
\label{eqDefRho}
\end{equation}
Define also the modified likelihood 
\begin{equation*}
\widetilde g_y^{\,d_x}(x) := g_y^{\,d_x}(x)\,\rho(x).
\end{equation*}

Let us denote by $\widetilde \mM$ the model with prior measure $\nu(\sd x)$ and likelihood $\widetilde g_y^{\,d_x}(x)$. It is straightforward to verify that, given some fixed observation $Y=y$, $\mM$ and $\widetilde \mM$ have the same marginal likelihood. Indeed, if we define
\begin{equation}
\widetilde l_y^{\,d_x}(x) := \mathtt{c} \, \widetilde g_y^{\,d_x}(x), 
\label{eqDefModL}
\end{equation}
for some constant $\mathtt{c}\in \mbR^+$ independent of $x$, then we readily see that
\begin{eqnarray}
\nu(\widetilde l_y^{\,d_x})
&=& \mathtt{c}\int_{\mX} \widetilde g_y^{\,d_x}(x)\,\nu(\sd x) \nonumber\\
&=& \mathtt{c}\int_{\mX} g_y^{\,d_x}(x)\,\pi_0(\sd x)
= \pi_0(l_y^{\,d_x}), \nonumber
\end{eqnarray}
where the second equality follows from the definition in \eqref{eqDefRho}. This observation is important, as it shows that the marginal density of $Y$ can equivalently be expressed as $\nu(\widetilde{l}_y^{\,d_x})$. Consequently, the analysis involving random observations carries over unchanged under the modified model.

Moreover, by a similar argument, we see that the modified model $\widetilde \mM$ also displays the same posterior law as the original model $\mM$, i.e., for any integrable test function $f:\mX \mapsto \mbR$ we obtain
\begin{equation}
\widetilde \pi_y(f)
= \frac{\nu(\widetilde l_y^{\,d_x} f)}{\nu(\widetilde l_y^{\,d_x})}
= \frac{\pi_0(l_y^{\,d_x} f)}{\pi_0(l_y^{\,d_x})}
= \pi_y(f).
\nonumber
\end{equation}
Therefore, model $\widetilde \mM$ can be interpreted as a reparametrisation of the original model $\mM$.

Finally, the standard importance sampling Algorithm \ref{A2} applied to model $\widetilde \mM$ yields the following steps:
\begin{enumerate}
\item Draw $x^1, \ldots, x^N$ iid from $\nu(\sd x)$.
\item Compute the likelihoods $\widetilde l_y^{\,d_x}(x^i) = \mathtt{c}\, \widetilde g_y^{\,d_x}(x^i)$, $i=1,\ldots,N$.
\item Compute the weights $w^i \propto \widetilde l_y^{\,d_x}(x^i)$, $i=1,\ldots,N$.
\end{enumerate}
Using the definitions in \eqref{eqDefRho}--\eqref{eqDefModL}, it is straightforward to verify that the procedure described above coincides exactly with Algorithm~\ref{A1}. Consequently, to assess the performance of Algorithm~\ref{A1} for the model $\mM$, it suffices to analyze the performance of Algorithm~\ref{A2} for the modified model $\widetilde{\mM}$, that is, to study the Bochner integrability of the associated link function $\widetilde{\ell^{\, d_x}}$ under $\widetilde{\mM}$. To this end, it is also necessary to ensure that the likelihood function $\widetilde g_y^{\,d_x}$ of the modified model $\widetilde{\mM}$ satisfies $\widetilde g_y^{\,d_x} \in L^2(\nu)$ for all $y\in \mY$.


\section{Statistical analysis of the normalization constant}\label{app:Z_ratio}
In the context of the posterior measure $\pi_y^{N}$, the normalization
constant $Z$ (also known as the \emph{evidence} or \emph{marginal likelihood})
is defined as the expectation of the likelihood function $g_y^{d_x}$
under the prior measure $\pi_0$,
\begin{equation*}
    Z := \int_{\mathcal{X}} g_y^{d_x}(x)\,\pi_0(\sd x)
    = \pi_0(g_y^{d_x}).
\end{equation*}
Given a set of $N$ iid samples $\{x_i\}_{i=1}^N$ drawn from the prior
distribution $\pi_0$, an unbiased Monte Carlo estimator of $Z$ is given by
the empirical mean
\begin{equation*}
    \hat{Z}_N = \frac{1}{N}\sum_{i=1}^{N} g_y^{d_x}(x_i).
\end{equation*}
This corresponds to a particular instance of the IS
framework in which the proposal distribution $\nu$ coincides with the prior
$\pi_0$. In this case, the estimator reduces to standard Monte Carlo
integration under $\pi_0$.
\subsubsection*{Variance analysis and $L^2(\pi_0)$ convergence}
To quantify the concentration of the estimator $\hat{Z}_N$ around the true
value $Z$, we analyze its variance. By independence of the samples and
linearity of expectation, the variance of $\hat{Z}_N$ satisfies
\begin{equation*}
    \mathrm{Var}(\hat{Z}_N)
    = \frac{1}{N}
      \left(
      \mathbb{E}_{\pi_0}\!\left[(g_y^{d_x}(X))^2\right]
      -
      (\mathbb{E}_{\pi_0}[g_y^{d_x}(X)])^2
      \right).
\end{equation*}
Expressing this relation in terms of the $L^2(\pi_0)$ norm yields
\begin{equation*}
    \mathrm{Var}(\hat{Z}_N)
    =
    \frac{1}{N}
    \left(
    \|g_y^{d_x}\|_{L^2(\pi_0)}^2 - Z^2
    \right).
\end{equation*}
Hence, the stability of the Monte Carlo approximation is governed by the
second moment of the likelihood function under the prior distribution.
A useful quantity to assess the magnitude of fluctuations is the
\emph{relative variance}
\begin{equation*}
    \frac{\mathrm{Var}(\hat{Z}_N)}{Z^2}
    =
    \frac{1}{N}
    \left(
\frac{\|g_y^{d_x}\|_{L^2(\pi_0)}^2}{Z^2} - 1
    \right).
\end{equation*}
Using the link function notation introduced earlier and
Remark~\ref{rem:link_norm}, we have
\[
\|\ell_y^{d_x}\|_{L^2(\pi_0)}^2
=
\frac{\|g_y^{d_x}\|_{L^2(\pi_0)}^2}{Z^2}.
\]
Therefore, the relative variance can be written as
\[
\frac{\mathrm{Var}(\hat{Z}_N)}{Z^2}
=
\frac{1}{N}
\left(
\|\ell_y^{d_x}\|_{L^2(\pi_0)}^2 - 1
\right).
\]
This expression highlights that the efficiency of the estimator is
directly controlled by the $L^2(\pi_0)$ norm of the link function.


\section{Proof of Theorem \ref{thIS2}} \label{appThIS2}

\subsection{Preliminaries}
Our goal is to establish explicit upper bounds for the \( L_p \) norms of the approximation errors
\[
\| \pi_y(f)-\pi_y^{N}(f)  \|_p, \quad p \ge 1,
\]
for any test function \( f \in B(\mX) \).

The following two propositions are straightforward to prove, hence we simply state them.

\begin{proposition}\label{MIP}
Let $\alpha_0, \beta_0 \in \mathcal{P}(S)$ and let $l, \tilde{l}$ be positive, real-valued functions on $S$ satisfying $\alpha_0(l) < \infty$ and $\beta_0(\tilde{l}) < \infty$. The probability measures $\alpha, \beta \in \mP(S)$ defined by
\[
\alpha(f) = \frac{\alpha_0(l f)}{\alpha_0(l)}, \quad \text{and} \quad
\beta(f) = \frac{\beta_0(\tilde{l} f)}{\beta_0(\tilde{l})},
\quad \forall f\in \mB(S),
\]
satisfy
\[
\left| \alpha(f) - \beta(f) \right|
\leq \frac{1}{\alpha_0(l)}
\left[
\left| \alpha_0(l f) - \beta_0(\tilde{l} f) \right|
+ \| f \|_{\infty}
\left| \alpha_0(l) - \beta_0(\tilde{l}) \right|
\right].
\]
\end{proposition}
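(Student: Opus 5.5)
The plan is to write the difference $\alpha(f)-\beta(f)$ as a single fraction over the common denominator $\alpha_0(l)\beta_0(\tilde l)$ and then split it by adding and subtracting one cross term. Concretely,
\[
\alpha(f)-\beta(f)=\frac{\alpha_0(lf)}{\alpha_0(l)}-\frac{\beta_0(\tilde l f)}{\beta_0(\tilde l)}
=\frac{\alpha_0(lf)-\beta_0(\tilde l f)}{\alpha_0(l)}+\beta_0(\tilde l f)\left(\frac{1}{\alpha_0(l)}-\frac{1}{\beta_0(\tilde l)}\right),
\]
where we added and subtracted $\beta_0(\tilde l f)/\alpha_0(l)$. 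All quantities are finite and the denominators are strictly positive: $l,\tilde l>0$ and $\alpha_0,\beta_0$ are probability measures, so $0<\alpha_0(l),\beta_0(\tilde l)<\infty$. Moreover, $f\in B(S)$ gives $|\alpha_0(lf)|\le\|f\|_\infty\alpha_0(l)<\infty$, and similarly for $\beta_0(\tilde l f)$.

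Next, the second term is simplified. We have
\[
\frac{1}{\alpha_0(l)}-\frac{1}{\beta_0(\tilde l)}=\frac{\beta_0(\tilde l)-\alpha_0(l)}{\alpha_0(l)\beta_0(\tilde l)},
\]
so the second term equals
\[
\beta(f)\,\frac{\beta_0(\tilde l)-\alpha_0(l)}{\alpha_0(l)}.
\]
Since $\beta$ is a probability measure, $|\beta(f)|\le\|f\|_\infty$.

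Finally, the triangle inequality applied to the two terms gives
\[
|\alpha(f)-\beta(f)|\le\frac{1}{\alpha_0(l)}\Bigl[|\alpha_0(lf)-\beta_0(\tilde l f)|+\|f\|_\infty|\alpha_0(l)-\beta_0(\tilde l)|\Bigr],
\]
which is exactly the stated bound.

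There is no real obstacle here. The only point needing care is choosing the cross term so that the factor $1/\alpha_0(l)$, and not $1/\beta_0(\tilde l)$, appears in front of both differences. That choice matters in the application: there $\alpha=\pi_y$ is the deterministic measure, so its normalizing constant $\alpha_0(l)=\pi_0(l_y)$ is deterministic, while $\beta=\pi_y^N$ is the random empirical approximation.
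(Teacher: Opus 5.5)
Your proof is correct and complete: the add-and-subtract identity, the bound $|\beta(f)|\le\|f\|_\infty$, and the triangle inequality give exactly the stated inequality, and you correctly check that $0<\alpha_0(l),\beta_0(\tilde l)<\infty$. The paper omits this proof as straightforward, but your decomposition $\alpha(f)-\beta(f)=\frac{1}{\alpha_0(l)}\bigl[(\alpha_0(lf)-\beta_0(\tilde l f))+\beta(f)(\beta_0(\tilde l)-\alpha_0(l))\bigr]$ is the same identity it uses later, with $\pi_Y^N(f)$ in the role of $\beta(f)$, when expanding $(\pi_Y(f)-\pi_Y^N(f))^2$ in the proof of Theorem~\ref{thm:thISR}.
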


\begin{proposition}\label{prop:Unbiased_E}
Let \( l_y^{\,d_x} \) be defined as in Eq.~\eqref{eq_ly} in Section~\ref{ssec:BI}. For any bounded measurable function \( f \), we have
\[
\mathbb{E}[\pi_0^N(f \, l_y^{\,d_x})] = \pi_0(f \, l_y^{\,d_x}).
\]
\end{proposition}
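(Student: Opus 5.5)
The identity $\mathbb{E}[\pi_0^N(f\,l_y^{d_x})] = \pi_0(f\,l_y^{d_x})$ is a direct unbiasedness argument for an empirical mean of i.i.d.\ samples. We first fix notation. Here $\pi_0^N(\sd x) := \frac{1}{N}\sum_{i=1}^N \delta_{x^i}(\sd x)$ is the unweighted empirical measure of the samples $x^1,\ldots,x^N$ drawn i.i.d.\ from $\pi_0$ in Algorithm~\ref{A2}. Hence
\[
\pi_0^N(f\,l_y^{d_x}) = \frac{1}{N}\sum_{i=1}^N f(x^i)\,l_y^{d_x}(x^i).
\]
The expectation is taken over the law of the samples with $y$ held fixed, as in \eqref{eq:norm_Exp}.

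The key step is integrability of $f\,l_y^{d_x}$ under $\pi_0$. Since $f$ is bounded, $|f\,l_y^{d_x}| \le \|f\|_\infty\,\mathtt{c}\,g_y^{d_x}$. Moreover $g_y^{d_x}\in L^2(\pi_0)\subseteq L^1(\pi_0)$, because $\pi_0$ is a probability measure and Cauchy--Schwarz applies. Under the hypotheses of Theorem~\ref{thIS2}, $g_y^{d_x}$ is also simply bounded. Either way,
\[
\pi_0\bigl(|f\,l_y^{d_x}|\bigr) \le \|f\|_\infty\,\mathtt{c}\,\pi_0(g_y^{d_x}) < \infty,
\]
so each term $f(x^i)\,l_y^{d_x}(x^i)$ is an integrable random variable.

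We then apply linearity of expectation. Each $x^i$ has law $\pi_0$, so
\[
\mathbb{E}\bigl[f(x^i)\,l_y^{d_x}(x^i)\bigr] = \int_{\mX} f(x)\,l_y^{d_x}(x)\,\pi_0(\sd x) = \pi_0(f\,l_y^{d_x}).
\]
Averaging the $N$ identical expectations gives the claim. Independence of the samples is not needed here; only the fact that they are identically distributed matters.

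There is no real obstacle. The only point requiring care is the integrability check above, which guarantees that linearity of expectation applies and the expectations are finite.
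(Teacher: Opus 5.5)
Your proof is correct and is the standard linearity-of-expectation argument the paper has in mind; the paper gives no proof, stating only that the proposition is ``straightforward to prove''. Your integrability check via $g_y^{d_x}\in L^2(\pi_0)\subseteq L^1(\pi_0)$, and your reading of $\pi_0^N$ as the unweighted empirical measure (consistent with how it is used in Lemma~\ref{lem:L2bound}), fill in details the paper leaves implicit.
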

Next, we obtain a partial characterization of the approximation errors.

\begin{lemma}\label{lem:L2bound}
Let $y\in\mY$ and \( p \in \{ 1, 2\} \) be fixed. Assume that \( g_y^{d_x} \in L^2(\pi_0) \) and \( l_y^{\,d_x}(x) \) defined as in Eq.~\eqref{eq_ly}. Then, for any \( f \in \mB(\mX) \), there exists a constant \( B_p > 0 \) depending only on \( p \), such that 
\begin{equation}\label{eq:E_y_fix}
\mathbb{E}\left[
\left| \pi_0(f l_y^{d_x}) - \pi_0^N(f l_y^{d_x}) \right|^p
\right]
\leq
\frac{\mathtt{c}^p \,B_p \|f\|^p_\infty}{N^{p/2}} \,
\|g_y^{d_x}\|_{L^2(\pi_0)}^p.
\end{equation}
\end{lemma}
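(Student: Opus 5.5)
The plan is to write the error as a normalized sum of i.i.d. centred random variables and apply the Marcinkiewicz--Zygmund inequality. We use that only the first moment of each summand is controlled by $\|g_y^{d_x}\|_{L^2(\pi_0)}$, not by a sup-norm. Let $x^1,\dots,x^N$ be i.i.d.\ with law $\pi_0$, let $\pi_0^N = \frac1N\sum_{i=1}^N \delta_{x^i}$, and set
\[
Z_i := f(x^i)\,l_y^{d_x}(x^i) - \pi_0(f l_y^{d_x}), \qquad i=1,\dots,N.
\]
Then $\pi_0^N(f l_y^{d_x}) - \pi_0(f l_y^{d_x}) = \frac1N\sum_{i=1}^N Z_i$. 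By Proposition~\ref{prop:Unbiased_E}, applied with a single sample, $\mathbb{E}[Z_i]=0$. The $Z_i$ are i.i.d. Since $|f l_y^{d_x}| \le \|f\|_\infty \mathtt{c}\, g_y^{d_x}$ and $g_y^{d_x}\in L^2(\pi_0)$, we have $Z_i\in L^2$, so every moment used below is finite.

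\textbf{Case $p=2$.} Independence and zero mean give the exact variance identity
\[
\mathbb{E}\Big[\Big|\tfrac1N\textstyle\sum_i Z_i\Big|^2\Big] = \tfrac1N \mathbb{E}[Z_1^2] \le \tfrac1N\, \pi_0\big(f^2 (l_y^{d_x})^2\big) \le \tfrac{\mathtt{c}^2\|f\|_\infty^2}{N}\,\|g_y^{d_x}\|^2_{L^2(\pi_0)} .
\]
This is \eqref{eq:E_y_fix} with $B_2=1$. It is also what Marcinkiewicz--Zygmund gives for $p=2$. The step $\mathbb{E}[Z_1^2]\le \pi_0(f^2 (l_y^{d_x})^2)$ holds because the variance is bounded by the second moment.

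\textbf{Case $p=1$.} We reduce to $p=2$ via Jensen (or Cauchy--Schwarz):
\[
\mathbb{E}\Big|\tfrac1N\textstyle\sum_i Z_i\Big| \le \Big(\mathbb{E}\Big|\tfrac1N\sum_i Z_i\Big|^2\Big)^{1/2} \le \tfrac{\mathtt{c}\|f\|_\infty}{\sqrt N}\|g_y^{d_x}\|_{L^2(\pi_0)}.
\]
This gives $B_1=1$. Alternatively, one can quote the Marcinkiewicz--Zygmund inequality $\mathbb{E}|\sum Z_i|^p \le b_p\, \mathbb{E}\big[(\sum Z_i^2)^{p/2}\big]$. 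Combined with Jensen for the concave map $t\mapsto t^{p/2}$ when $p\le 2$, this gives $\mathbb{E}[(\sum Z_i^2)^{p/2}] \le (N\mathbb{E}Z_1^2)^{p/2}$. Both routes yield a constant $B_p$ depending only on $p$, which matches the statement's reference to a Marcinkiewicz--Zygmund constant.

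The only point needing care is why the statement restricts to $p\in\{1,2\}$. For $p>2$, the Marcinkiewicz--Zygmund route would require $\mathbb{E}|Z_1|^p$, that is, $g_y^{d_x}\in L^p(\pi_0)$, which is not assumed. For $p\le 2$, concavity lets us avoid this, so the bound depends on $g_y^{d_x}$ only through its $L^2(\pi_0)$ norm. No other obstacle is expected.
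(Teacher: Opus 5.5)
Your proof is correct and follows the same route as the paper: write the error as a normalized sum of i.i.d.\ centred variables and control it through the second moment $\pi_0\big([l_y^{d_x}]^2\big)=\mathtt{c}^2\|g_y^{d_x}\|^2_{L^2(\pi_0)}$. The differences are only refinements. The paper bounds $\mathbb{E}[U_{i,f}^2]$ via $(a-b)^2\le 2(a^2+b^2)$ and $\pi_0(l_y^{d_x})^2\le\pi_0([l_y^{d_x}]^2)$, which costs a factor $4$, and then invokes Marcinkiewicz--Zygmund without stating the Jensen step needed for $p=1$; your exact variance identity plus Jensen gives $B_1=B_2=1$ and makes that step explicit.
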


\begin{proof}
Fix $y \in \mathcal{Y}$ and $p \in \{1,2\}$, and let $\{x^i\}_{i=1}^N$ be an iid\ sequence of r.v.s with common distribution $\pi_0$.
Define the zero-mean iid\ r.v.s
\[
U_{i,f}
:=
\frac{1}{N}
\left(
\pi_0(f l_y^{d_x}) - f(x^i) l_y^{d_x}(x^i)
\right).
\]
Note that
\[
\sum_{i=1}^N U_{i,f}
=
\pi_0(f l_y^{d_x}) - \pi_0^N(f l_y^{d_x}).
\]
Using the inequality
\[
|x-y|^p \le 2^{p-1} (|x|^p + |y|^p),
\]
we obtain
\[
U_{i,f}^2
\le
\frac{2 \|f\|_\infty^2}{N^2}
\left(
\pi_0^2(l_y^{d_x}) + l_y^{d_x}(x^i)^2
\right).
\]
Proceeding as in the standard Marcinkiewicz--Zygmund inequality argument,
we obtain
\[
\sum_{i=1}^N
\mathbb{E}[U_{i,f}^2]
\le
\frac{2\|f\|_\infty^2}{N}
\left(
\pi_0^2(l_y^{d_x}) + \pi_0([l_y^{d_x}]^2)
\right).
\]
Since
\[
0 < \pi_0^2(l_y^{d_x})
\le \pi_0([l_y^{d_x}]^2)
= \mathtt{c}^2\|g_y^{d_x}\|_{L^2(\pi_0)}^2 < \infty,
\]
we conclude that
\[
\sum_{i=1}^N
\mathbb{E}[U_{i,f}^2]
\le
\frac{(2 \mathtt{c})^2\|f\|_\infty^2}{N}
\|g_y^{d_x}\|_{L^2(\pi_0)}^2.
\]
Applying the Marcinkiewicz--Zygmund inequality yields
\eqref{eq:E_y_fix}.
\end{proof}

\begin{corollary}\label{cor:L2bound}
Suppose that \( \|g_y^{d_x}\|_{\infty} < \infty \).
Then the conclusion of Lemma~\ref{lem:L2bound} holds for all \( p \ge 1 \), with
\[
\mathbb{E}\left[
\left| \pi_0(f l_y^{d_x}) - \pi_0^N(f l_y^{d_x}) \right|^p
\right]
\le
\frac{\mathtt{c}^p\, B_p \|f\|_\infty^p}{N^{p/2}}
\|g_y^{d_x}\|_\infty^p.
\]
\end{corollary}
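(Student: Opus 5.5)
The claim is Corollary~\ref{cor:L2bound}: with $\|g_y^{d_x}\|_\infty<\infty$, the Marcinkiewicz--Zygmund bound for $\pi_0(f l_y^{d_x})-\pi_0^N(f l_y^{d_x})$ extends to every $p\ge 1$, with $\|g_y^{d_x}\|_{L^2(\pi_0)}$ replaced by $\|g_y^{d_x}\|_\infty$. I would reuse the decomposition of Lemma~\ref{lem:L2bound}. There, with $x^i\sim\pi_0$ i.i.d., the error is written as a sum of i.i.d.\ zero-mean variables,
\[
\pi_0(f l_y^{d_x}) - \pi_0^N(f l_y^{d_x}) = \sum_{i=1}^N U_{i,f},
\qquad
U_{i,f} = \frac{1}{N}\bigl(\pi_0(f l_y^{d_x}) - f(x^i) l_y^{d_x}(x^i)\bigr).
\]
The restriction $p\in\{1,2\}$ in the lemma came only from controlling the summands through second moments, i.e.\ through $\|g_y^{d_x}\|_{L^2(\pi_0)}$. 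With a uniformly bounded likelihood, each summand is almost surely bounded, so moments of every order are available.

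The first step is a pointwise bound on the summands. Since $l_y^{d_x}=\mathtt{c}\,g_y^{d_x}$, we have $|f(x^i)l_y^{d_x}(x^i)|\le \mathtt{c}\|f\|_\infty\|g_y^{d_x}\|_\infty$. Likewise $|\pi_0(f l_y^{d_x})|\le \mathtt{c}\|f\|_\infty\|g_y^{d_x}\|_\infty$. Hence, almost surely,
\[
|U_{i,f}|\le \frac{2\mathtt{c}\|f\|_\infty\|g_y^{d_x}\|_\infty}{N}.
\]

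The second step applies the Marcinkiewicz--Zygmund inequality for $p\ge1$ to the independent, zero-mean $U_{i,f}$:
\[
\mathbb{E}\Bigl|\sum_i U_{i,f}\Bigr|^p \le \tilde B_p\, \mathbb{E}\Bigl[\Bigl(\sum_i U_{i,f}^2\Bigr)^{p/2}\Bigr].
\]
Inserting the pointwise bound gives $\sum_i U_{i,f}^2\le 4\mathtt{c}^2\|f\|_\infty^2\|g_y^{d_x}\|_\infty^2/N$ deterministically. Raising this to the power $p/2$ gives
\[
\mathbb{E}\Bigl|\sum_i U_{i,f}\Bigr|^p \le \tilde B_p\, 2^p\,\mathtt{c}^p\|f\|_\infty^p\|g_y^{d_x}\|_\infty^p N^{-p/2}.
\]
Setting $B_p:=2^p\tilde B_p$, which depends only on $p$, yields the stated inequality.

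I do not expect a real obstacle. The only care needed is the case $1\le p<2$, where $x\mapsto x^{p/2}$ is concave. This causes no difficulty because the bound on $\sum_i U_{i,f}^2$ is deterministic, so no Jensen step is required. One should also note that boundedness of $g_y^{d_x}$ implies $g_y^{d_x}\in L^2(\pi_0)$, so all expectations are finite.
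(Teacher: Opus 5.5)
Your proposal is correct and follows essentially the same route as the paper, which simply reruns the Marcinkiewicz--Zygmund argument of Lemma~\ref{lem:L2bound} with $\|g_y^{d_x}\|_{L^2(\pi_0)}$ replaced by $\|g_y^{d_x}\|_\infty$. Your almost-sure bound $|U_{i,f}|\le 2\mathtt{c}\|f\|_\infty\|g_y^{d_x}\|_\infty/N$ makes $\sum_i U_{i,f}^2$ deterministically bounded, and that is exactly what justifies the extension to $p>2$, a detail the paper's one-line proof leaves implicit.
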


\begin{proof}
The proof follows the same steps as in Lemma~\ref{lem:L2bound}, replacing
the $L^2$ bound with the uniform bound, since
\[
\|g_y^{d_x}\|_{L^2(\pi_0)} \le \|g_y^{d_x}\|_\infty.
\]
\end{proof}

\subsection{Proof of Theorem \ref{thIS2}}

We recall the definition of $l_y^{\, d_x}(x)$ in~\eqref{eq_ly} and note that
\[
\pi_y(f) - \pi_y^{N}(f)
=
\frac{\pi_0(f l_y^{d_x})}{\pi_0(l_y^{d_x})}
-
\frac{\pi_0^N(f l_y^{d_x})}{\pi_0^N(l_y^{d_x})}.
\]
Applying Proposition~\ref{MIP}, we obtain
\begin{equation*}
\left| \pi_y(f) - \pi_y^{N}(f) \right|
\le
\frac{1}{\mathtt{c} \, \pi_0(g_y^{d_x})}
\left[
\left| \pi_0^N(f l_y^{d_x}) - \pi_0(f l_y^{d_x}) \right|
+
\|f\|_\infty
\left| \pi_0(l_y^{d_x}) - \pi_0^N(l_y^{d_x}) \right|
\right].
\end{equation*}
Since the second term corresponds to the first with \( f \equiv 1 \),
it suffices to control the first one. Rasing to the power $p$, using the elementary inequality $\abs{x+y}^p \leq 2^{p-1} (\abs{x}^p+ \abs{y}^p)$ 
and  Corollary~\ref{cor:L2bound}, we obtain
\[
\| \pi_y(f) - \pi_y^{N}(f) \|_p^p
\le
\left(
\frac{B_p}{\pi_0(g_y^{d_x})}
\right)^p
\frac{\|f\|_\infty^p}{N^{p/2}}
\|g_y^{d_x}\|_\infty^p,
\]
where \( B_p < \infty \) depends only on \( p \).
This yields \eqref{eq:ThIS2} with
\beq\label{eq:C_ydz}
C_y
:=
\frac{B_p \|g_y^{d_x}\|_\infty}{\pi_0(g_y^{d_x})} < \infty.
\eeq
\qed


\section{Bochner integrability and expectation functionals
}\sectionmark{Bochner integrability \& expectation}\label{apx:Bochner}

In this appendix, we investigate the relationship between the Bochner integrability of the link function
\(\ell^{\, d_x}\), defined in Eq.~\eqref{eq:link}, and the functional and bilinear form introduced in Section~\ref{sec:Bochner_Int} for the posterior random measure \(\pi_Y\).

For the map \(\ell^{\, d_x}\) to be Bochner integrable, it is necessary and sufficient that it is strongly \(\eta\)-measurable and that the integral of its norms is finite; that is,
\[
\int_{\mathcal{Y}} \|\ell^{\, d_x}_y\|_{L^2(\pi_0)} \, \eta(\mathrm{d}y) < \infty.
\]
The following lemma guarantees strong measurability of \(\ell^{\, d_x}\), as defined in \eqref{eq:link}, for all models considered in this work.

\subsection[Bochner measurability of \(\ell^{\, d_x}\)]{Bochner measurability of \(\ell^{\, d_x}\)}\label{Apx:B_M}

\begin{lemma}\label{lem:S_M}
Let \( (\mathcal{Y}, \mathcal{B}(\mathcal{Y}), \eta) \) and
\( (\mX, \mathcal{B}(\mX), \pi_0) \) be measure spaces,
and let \( L^2(\pi_0) \) denote the Hilbert space of square-integrable functions w.r.t.\ \( \pi_0 \).
Assume that the function
\( g : \mathcal{Y} \times \mX \to (0, \infty) \)
satisfies the following two conditions:
\begin{enumerate}[label=\roman*)]
    \item \( g \) is jointly measurable w.r.t.\ the product \(\sigma\)-algebra
    \( \mathcal{B}(\mathcal{Y}) \otimes \mathcal{B}(\mX) \),
    \item for every \( y \in \mathcal{Y} \), the section
    \( x \mapsto g(y \mid x) \) belongs to \( L^2(\pi_0) \).
\end{enumerate}
Then the map \( \ell^{\, d_x} : \mathcal{Y} \to L^2(\pi_0) \) defined as
\[
\ell^{\, d_x}_y := \frac{g_y^{d_x}( \cdot)}{\int_{\mX} g_y^{d_x}(x) \, \pi_0 (\sd x)}
\]
is strongly \( \eta \)-measurable.
\end{lemma}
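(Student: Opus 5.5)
The plan is to use the Pettis measurability theorem. In the separable setting it reduces strong measurability to weak measurability. Since $\mX \subseteq \mbR^{d_x}$ with its Borel $\sigma$-algebra is countably generated and $\pi_0$ is a finite Borel measure, the Hilbert space $L^2(\pi_0)$ is separable. By Pettis' theorem (cf. \cite{tuomas2016analysis}), a map $\upsilon:\mY\to L^2(\pi_0)$ is strongly $\eta$-measurable as soon as it is weakly measurable. Here weakly measurable means that, for every $\varphi\in L^2(\pi_0)$, the scalar map $y\mapsto \langle \varphi,\upsilon_y\rangle_{L^2(\pi_0)}$ is $\mB(\mY)$-measurable. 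By the Riesz representation it suffices to test against elements of $L^2(\pi_0)$ itself. The separable-range requirement is automatic because the target space is separable.

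First I will handle the numerator map $y\mapsto g_y^{d_x}\in L^2(\pi_0)$. For $\varphi\in L^2(\pi_0)$, consider the function $(y,x)\mapsto \varphi(x)g(y\mid x)$. It is jointly measurable by assumption i). Splitting $\varphi=\varphi^+-\varphi^-$ and applying Tonelli's theorem to the nonnegative functions $\varphi^\pm(x)g(y\mid x)$ shows that $y\mapsto\int\varphi^\pm(x)g(y\mid x)\pi_0(\sd x)$ are measurable $[0,\infty]$-valued maps. By Cauchy--Schwarz and ii), both are finite for every $y$. Hence their difference $y\mapsto\langle\varphi,g_y^{d_x}\rangle_{L^2(\pi_0)}$ is a well-defined real measurable function. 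The same Tonelli argument with $\varphi\equiv 1$ gives measurability of $Z(y):=\pi_0(g_y^{d_x})$. Moreover $Z(y)\in(0,\infty)$, since $g>0$ and $g_y^{d_x}\in L^2(\pi_0)\subseteq L^1(\pi_0)$.

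Next I will combine the two pieces. Since $\langle\varphi,\ell_y^{\,d_x}\rangle_{L^2(\pi_0)}=\langle\varphi,g_y^{d_x}\rangle_{L^2(\pi_0)}/Z(y)$ is a quotient of measurable functions with nonvanishing denominator, $\ell^{\,d_x}$ is weakly measurable. Pettis' theorem then yields strong measurability with respect to any measure on $\mB(\mY)$, in particular $\eta$.

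The main obstacle is justifying the separability and the exact form of Pettis' theorem being invoked, that is, making sure weak measurability with respect to $\mB(\mY)$, rather than the $\eta$-completion, suffices. If one prefers to avoid Pettis, there is an alternative route. Fix a countable dense set $\{\varphi_k\}$ and approximate $\ell_y^{\,d_x}$ by projections onto $\mathrm{span}\{\varphi_1,\dots,\varphi_n\}$ with measurable coefficients, giving pointwise limits of measurable finitely-valued-type maps. Strong measurability also needs finitely-valued (simple) approximants, which can be obtained by a countable partition of each finite-dimensional span. I would present the Pettis version and mention this alternative.
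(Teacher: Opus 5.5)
Your proposal is correct and follows essentially the same route as the paper. Pettis' theorem on the separable space $L^2(\pi_0)$ reduces strong measurability to weak measurability, which you obtain from Tonelli applied to $(y,x)\mapsto\varphi(x)g(y\mid x)$ and division by the measurable, strictly positive normaliser $\pi_0(g_y^{d_x})$. Your extra details fill in points the paper leaves implicit: the split $\varphi=\varphi^+-\varphi^-$, finiteness via Cauchy--Schwarz, and the passage from $\mB(\mY)$-strong measurability to strong $\eta$-measurability. That last step holds because $\eta$ is finite, but not for an arbitrary non-$\sigma$-finite measure as you phrase it.
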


\begin{proof}
Since \( L^2(\pi_0) \) is a separable Hilbert space, by Pettis' Theorem
(cf.\ Theorem~1.1 in \cite{pettis1938integration}),
it suffices to show that \( \ell^{\, d_x} \) is weakly measurable.
This is equivalent to proving that the map
\[
y \mapsto \langle  \varphi, \ell_y^{\, d_x} \rangle_{L^2(\pi_0)}
\quad \text{is measurable for all }  \varphi \in L^2(\pi_0).
\]
Computing the inner product,
\[
\langle  \varphi, \ell_y^{\, d_x} \rangle_{L^2(\pi_0)}
=
\frac{1}{\pi_0(g_y^{d_x})}
\int_{\mX}
 \varphi(x)\, g_y^{d_x}(x) \, \pi_0(\sd x).
\]
Since the map \( (y,x) \mapsto  \varphi(x) g_y^{d_x}(x) \)
is jointly measurable, the Fubini--Tonelli Theorem
(cf.\ \cite{rudin1987real}, Theorem~8.8)
implies that
\[
y \mapsto \int_{\mX}  \varphi(x)\, g_y^{d_x}(x) \, \pi_0(\sd x)
\]
is measurable.
Moreover, since \( y \mapsto \pi_0(g_y^{d_x}) \) is measurable and strictly positive,
we conclude that
\[
y \mapsto \langle  \varphi, \ell_y^{\, d_x} \rangle_{L^2(\pi_0)}
\]
is measurable for all \(  \varphi \in L^2(\pi_0) \).
Therefore, \( \ell^{\, d_x} \) is weakly measurable.
\end{proof}

\subsection[Bounded functionals]{%
Boundedness of expectation functionals}\label{apx:Bochner-F}

Let $\mathcal{H}$ be a linear normed space over the field of real numbers $\mbR$. A mapping $\mathrm{F}: \mathcal{H} \mapsto\mbR$ is called a functional if it is linear; that is, for all $\upsilon, \varsigma \in \mathcal{H}$ and $\alpha, \beta \in \mbR$,
$$\mathrm{F}(\alpha\upsilon + \beta\varsigma) = \alpha \mathrm{F}(\upsilon) + \beta \mathrm{F}(\varsigma).$$
A functional $\mathrm{F}$ is called bounded if there exists a positive finite constant $C$ such that 
$$|\mathrm{F}(\upsilon)| \leq C \|\upsilon\|, \quad \text{for all } \,\upsilon \in \mathcal{H}.$$
 The norm of the functional $\mathrm{F}$ is defined as the smallest such constant $C$, given by
$$\|\mathrm{F}\| = \sup_{\upsilon \in \mathcal{H}, \upsilon \neq 0} \frac{|\mathrm{F}(\upsilon)|}{\|\upsilon\|} = \sup_{\|\upsilon\|=1} |\mathrm{F}(\upsilon)|.$$
A mapping $\mathrm{B}: \mathcal{H} \times \mathcal{H} \mapsto \mbR$ is called a bilinear form (or bifunctional) if it is linear in each argument separately.
A bilinear form $\mathrm{B}$ is called symmetric if $\mathrm{B}(\upsilon, \varsigma) = \mathrm{B}(\varsigma, \upsilon)$ for all $\upsilon, \varsigma \in \mathcal{H}$.
A bilinear form $\mathrm{B}$ is called bounded if there exists a positive finite constant $C$ such that 
$$|\mathrm{B}(\upsilon, \varsigma)| \leq C \|\upsilon\| \|\varsigma\|, \quad \text{for all } \, \upsilon, \varsigma \in \mathcal{H}.$$
 The norm of the bilinear form $\mathrm{B}$ is defined as
$$\|\mathrm{B}\| = \sup_{\upsilon, \varsigma \in \mathcal{H}, \upsilon, \varsigma \neq 0} \frac{|\mathrm{B}(\upsilon, \varsigma)|}{\|\upsilon\| \|\varsigma\|}= \sup_{\|\upsilon\|=1, \|\varsigma\|=1} |\mathrm{B}(\upsilon, \varsigma)|.$$
A quadratic form $\mathrm{Q}: \mathcal{H} \mapsto \mbR$ is induced by a symmetric bilinear form $\mathrm{B}$ by
$$\mathrm{Q}(\upsilon) = \mathrm{B}(\upsilon, \upsilon), \quad \text{for all } \, \upsilon \in \mathcal{H}.$$
The quadratic form $\mathrm{Q}$ is called bounded if there exists a positive finite constant $C$ such that 
$$|\mathrm{Q}(\upsilon)| \leq C \|\upsilon\|^2, \quad \text{for all } \, \upsilon \in \mathcal{H}.$$
The norm of the quadratic form $\mathrm{Q}$ is defined as
$$\|\mathrm{Q}\| = \sup_{\upsilon \in \mathcal{H}, \upsilon \neq 0} \frac{|\mathrm{Q}(\upsilon)|}{\|\upsilon\|^2}= \sup_{\|\upsilon\|=1} |\mathrm{Q}(\upsilon)|.$$

\begin{remark}\label{rem:constant_ext}
The space $L^2(\pi_0)$ embeds canonically into
$L^\infty(\mathcal{Y};L^2(\pi_0))$ by identifying each
$\varphi\in L^2(\pi_0)$ with the $y$-independent function
$\widetilde{\varphi}:\mathcal{Y}\times\mX\to\mathbb{R}$ defined by
\[
\widetilde{\varphi}(y,x)=\varphi(x).
\]
This embedding is isometric, since
\[
\|\widetilde{\varphi}\|_{L^\infty(\mathcal{Y};L^2(\pi_0))}
=
\|\varphi\|_{L^2(\pi_0)}.
\]
Hence
\[
L^2(\pi_0)\subset L^\infty(\mathcal{Y};L^2(\pi_0)),
\]
and $L^2(\pi_0)$ can be identified with the subspace of
$L^\infty(\mathcal{Y};L^2(\pi_0))$ consisting of functions that
are constant in the variable $y$.
\end{remark}

\begin{lemma}\label{lem:bounded_functional}
Define the functional $\mathrm{F}$ and the bilinear form $\mathrm{B}$ on
$L^\infty\!\bigl(\mathcal{Y}; L^2(\pi_0)\bigr)$ by
\[
\mathrm{F}(\upsilon)
:=
\int_{\mathcal{Y}}
\big\langle \upsilon , \ell^{\, d_x}_y \big\rangle_{L^2(\pi_0)} \, \eta(\sd y),
\qquad
\mathrm{B}(\upsilon,\varsigma)
:=
\int_{\mathcal{Y}}
\big\langle \upsilon , \ell^{\, d_x}_y \big\rangle_{L^2(\pi_0)}
\big\langle \varsigma , \ell^{\, d_x}_y \big\rangle_{L^2(\pi_0)}
\, \eta(\sd y),
\]
for all $\upsilon,\varsigma \in L^\infty\!\bigl(\mathcal{Y}; L^2(\pi_0)\bigr)$.
Then
\begin{enumerate}[label=\roman*)]
\item
$\mathrm{F}$ is bounded, if and only if, $\ell^{\, d_x} \in L^1(\mathcal{Y}; L^2(\pi_0))$;
\item
$\mathrm{B}$ is bounded, if and only if, $\ell^{\, d_x} \in L^2(\mathcal{Y}; L^2(\pi_0))$.
\end{enumerate}
In both cases, the optimal bound is
\[
K_p^{d_x}
:=
\int_{\mathcal{Y}}
\|\ell^{\, d_x}_y\|_{L^2(\pi_0)}^p \, \eta(\sd y),
\quad p\in \{1,2\}.
\]
\end{lemma}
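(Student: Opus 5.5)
The plan is to prove each equivalence with a Cauchy--Schwarz bound in one direction and a norming (duality) construction in the other. Throughout, the pairing $\langle \upsilon,\ell^{\,d_x}_y\rangle_{L^2(\pi_0)}$ is read as $\langle \upsilon_y,\ell^{\,d_x}_y\rangle_{L^2(\pi_0)}$. This is measurable in $y$ by Lemma~\ref{lem:S_M}: strong measurability of both maps makes the pairing a limit of measurable simple-function pairings.

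\emph{Sufficiency.} Assume $\ell^{\,d_x}\in L^p(\mathcal Y;L^2(\pi_0))$, and take $\upsilon,\varsigma$ with $\|\upsilon_y\|,\|\varsigma_y\|$ bounded $\eta$-a.e. by their $L^\infty(\mathcal Y;L^2(\pi_0))$ norms. Cauchy--Schwarz in $L^2(\pi_0)$ applied pointwise in $y$ gives
\[
|\mathrm{F}(\upsilon)|\le \|\upsilon\|_{L^\infty(\mathcal Y;L^2(\pi_0))}\,K_1^{d_x},
\qquad
|\mathrm{B}(\upsilon,\varsigma)|\le \|\upsilon\|_{L^\infty(\mathcal Y;L^2(\pi_0))}\|\varsigma\|_{L^\infty(\mathcal Y;L^2(\pi_0))}\,K_2^{d_x}.
\]
This also shows the integrals defining $\mathrm{F}$ and $\mathrm{B}$ are absolutely convergent, so both are well defined, linear and bilinear respectively. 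Hence $\|\mathrm{F}\|\le K_1^{d_x}$ and $\|\mathrm{B}\|\le K_2^{d_x}$.

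\emph{Necessity and optimality.} Use the normalized direction field $\upsilon^\star_y:=\ell^{\,d_x}_y/\|\ell^{\,d_x}_y\|_{L^2(\pi_0)}$. It is well defined because $g_y^{d_x}>0$ forces $\ell_y^{\,d_x}\neq 0$. It is strongly measurable as the product of the strongly measurable $\ell^{\,d_x}$ with a measurable scalar, and it has norm exactly $1$ in $L^\infty(\mathcal Y;L^2(\pi_0))$. Then
\[
\mathrm{F}(\upsilon^\star)=K_1^{d_x},\qquad \mathrm{B}(\upsilon^\star,\upsilon^\star)=K_2^{d_x}.
\]
So if $\mathrm{F}$ (resp. $\mathrm{B}$) is bounded, $K_1^{d_x}\le\|\mathrm{F}\|<\infty$ (resp. $K_2^{d_x}\le\|\mathrm{B}\|<\infty$). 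This yields Bochner integrability, since strong measurability is already supplied by Lemma~\ref{lem:S_M}, and it shows the bound $K_p^{d_x}$ is attained, i.e.\ optimal.

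\emph{Main obstacle.} The delicate step is necessity, because when $K_p^{d_x}=\infty$ the integral defining $\mathrm{F}(\upsilon^\star)$ may not a priori make sense. I would handle this by truncation. Let $E_n:=\{y:\|\ell^{\,d_x}_y\|_{L^2(\pi_0)}\le n\}$ and $\upsilon^{(n)}:=\mathbf 1_{E_n}\upsilon^\star$, which still has norm at most $1$ and gives finite integrals. Boundedness then yields $\int_{E_n}\|\ell^{\,d_x}_y\|^p\,\eta(\sd y)\le\|\mathrm{F}\|$ (or $\|\mathrm{B}\|$). Monotone convergence as $n\to\infty$ gives $K_p^{d_x}<\infty$.

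Finally, to connect with Theorem~\ref{thm:4.1} (Remark~\ref{rem:proof_cor}), restrict to the $y$-constant functions of Remark~\ref{rem:constant_ext}. For such $\varphi$, $\mathrm{F}(\varphi)=\mathbb E[\pi_Y(\varphi)]$ and $\mathrm{B}(\varphi,\varphi)=\mathbb E[\pi_Y^2(\varphi)]$ by \eqref{eq:pi_Y_Inner}, and the sufficiency bounds give the claimed inequalities.
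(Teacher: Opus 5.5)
Your proof is correct and follows the paper's argument. Sufficiency is Cauchy--Schwarz pointwise in $y$. For necessity, your norming field $\upsilon^\star_y=\ell^{\,d_x}_y/\|\ell^{\,d_x}_y\|_{L^2(\pi_0)}$ is exactly the paper's choice $g_y^{d_x}/\|g_y^{d_x}\|_{L^2(\pi_0)}$, and it yields $\|\mathrm{F}\|=K_1^{d_x}$ and $\|\mathrm{B}\|=K_2^{d_x}$. Your truncation to $E_n$ and the explicit strong-measurability check for $\upsilon^\star$ are a worthwhile tightening the paper omits, since it writes $\mathrm{Q}(\upsilon)=K_2^{d_x}\le\|\mathrm{B}\|$ directly without first ensuring this quantity is finite.
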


\begin{proof}
We prove (ii); the proof of (i) is analogous.

Assume first that
$\ell^{\, d_x} \in L^2(\mathcal{Y}; L^2(\pi_0))$.
Then, by Cauchy--Schwarz inequality,
\begin{align}\label{ineq:K_optimal1}
\abs{B(\upsilon,\varsigma)} &\leq \|\upsilon\|_{L^\infty(\mY;L^2(\pi_0))}\|\varsigma\|_{L^\infty(\mY;L^2(\pi_0))} \int_{\mathcal{Y}} \|\ell_y^{\, d_x}\|_{L^2(\pi_0)}^2 \, \eta(\sd y) \notag\\ & =  K_2^{d_x} \|\upsilon\|_{L^\infty(\mY;L^2(\pi_0))}\|\varsigma\|_{L^\infty(\mY;L^2(\pi_0))}<\infty. \notag
\end{align}
In particular,  for \(\|\upsilon\|_{L^\infty(\mY;L^2(\pi_0))}= \|\varsigma\|_{L^\infty(\mY;L^2(\pi_0))}=1,\) we have  \beq\label{ineq:K_optimal1}
\norm{\mathrm{B}}= \sup_{\|\upsilon\|= \|\varsigma\|=1} |\mathrm{B}(\upsilon, \varsigma)| \leq K_2^{d_x}.
\eeq
Conversely, assume $\mathrm{B}$ is bounded.
Define
\[
\upsilon_y := \frac{g_y^{d_x}(\cdot)}{\|g_y^{d_x}\|_{L^2(\pi_0)}}.
\]
Then $\upsilon \in L^\infty(\mathcal{Y}; L^2(\pi_0))$ and $\|\upsilon_y\|_{L^\infty(\mY;L^2(\pi_0))} = 1$. Moreover, for each \(y \in \mathcal{Y}\)
\[
\langle \upsilon_y, \ell^{\, d_x}_y \rangle_{L^2(\pi_0)}
= \|\ell^{\, d_x}_y\|_{L^2(\pi_0)}.
\]
Indeed, a direct computation yields
\[
\langle \upsilon_y, \ell_y^{\,d_x} \rangle_{L^2(\pi_0)} = \frac{\int_{\mX} [g_y^{d_x}(x) ]^2 \pi_0(\sd x)}{\|g_y^{d_x}\|_{L^2(\pi_0)}  \pi_0(g_y^{d_x})}  
= \frac{\|g_y^{d_x}\|_{L^2(\pi_0)}}{\pi_0(l_y^{d_x})} = \|\ell_y^{\,d_x}\|_{L^2(\pi_0)}.
\]
Hence
\beq\label{ineq:K_optimal2}
\mathrm{Q}(\upsilon) =
\int_{\mathcal{Y}} \|\ell^{\, d_x}_y\|_{L^2(\pi_0)}^2 \, \eta(\sd y)
= K_2^{d_x}
\le \norm{\mathrm{B}},
\eeq
which implies $\ell^{\, d_x} \in L^2(\mathcal{Y}; L^2(\pi_0))$. From \eqref{ineq:K_optimal1} and \eqref{ineq:K_optimal2} \[K_2^{d_x}=\norm{\mathrm{B}}.\] 
\end{proof}

\begin{remark}\label{rem:proof_cor}
For all $\varphi \in L^2(\pi_0)$,
\[
\mathbb{E}[ \pi_Y(\varphi) ] = F(\widetilde \varphi),
\qquad
\mathbb{E}[ \pi_Y^2(\varphi) ] = \mathrm{Q}(\widetilde \varphi),
\]
where $\widetilde \varphi$ is the constant extension of $\varphi$ defined in Remark \ref{rem:constant_ext}.
\end{remark}


\section{Proof of Theorem~\ref{thm:thISR}}\label{appThIS3}

We first prove $(ii)\Rightarrow(i)$.

Suppose that $\ell^{\, d_x} \in L^2\big(\mathcal{Y};L^2(\pi_0)\big)$.
By Bayes' formula,
\[
\pi_Y(f)-\pi_Y^{N}(f)
=
\frac{\pi_0(f\,l_Y^{d_x})}{\pi_0(l_Y^{d_x})}
-
\frac{\pi_0^{N}(f\,l_Y^{d_x})}{\pi_0^{N}(l_Y^{d_x})}.
\]
Applying Proposition~\ref{MIP}, we obtain
\begin{equation*}
\left|\pi_Y(f)-\pi_Y^{N}(f)\right|
\le
\frac{1}{\pi_0(l_Y^{d_x})}
\left[
\left|\pi_0(f\,l_Y^{d_x})-\pi_0^{N}(f\,l_Y^{d_x})\right|
+
\|f\|_\infty
\left|\pi_0^{N}(l_Y^{d_x})-\pi_0(l_Y^{d_x})\right|
\right].
\end{equation*}
Since the second term corresponds to the special case $f\equiv 1$,
it suffices to bound the first term. Raising it to the power $p$, and using
the independence of $Y$ from the Monte Carlo samples $\{x^i\}_{i=1}^N$, we obtain
\begin{align*}
\mathbb{E}\!\left[
\frac{1}{\pi_0(l_Y^{d_x})^p}
\left|\pi_0(f\,l_Y^{d_x})-\pi_0^{N}(f\,l_Y^{d_x})\right|^p
\right]
&=
\int_{\mathcal{Y}}
\frac{1}{\pi_0(l_y^{d_x})^p}
\mathbb{E}\!\left[
\left|\pi_0(f\,l_y^{d_x})-\pi_0^{N}(f\,l_y^{d_x})\right|^p
\right]
\,\eta(\mathrm{d}y).
\end{align*}
Applying Lemma~\ref{lem:L2bound} together with Eq. \eqref{norm_link_f} gives
\[
\int_{\mathcal{Y}}
\frac{1}{\mathtt{c}^p\, \pi_0(g_y^{d_x})^p}
\mathbb{E}\!\left[
\left|\pi_0(f\,l_y^{d_x})-\pi_0^{N}(f\,l_y^{d_x})\right|^p
\right]
\,\eta(\mathrm{d}y)
\le
\frac{\widetilde B_p\|f\|_\infty^p}{N^{p/2}}
\int_{\mathcal{Y}}
\|\ell^{\,d_x}_y\|_{L^2(\pi_0)}^p
\,\eta(\mathrm{d}y).
\]
For $f\equiv 1$, the same arguments gives
\[
\int_{\mathcal{Y}}
\frac{1}{\mathtt{c}^p \,\pi_0(g_y^{d_x})^p}
\mathbb{E}\!\left[
\left|\pi_0(l_y^{d_x})-\pi_0^{N}(l_y^{d_x})\right|^p
\right]
\,\eta(\mathrm{d}y)
\le
\frac{\widehat B_p}{N^{p/2}}
\int_{\mathcal{Y}}
\|\ell^{\,d_x}_y\|_{L^2(\pi_0)}^p
\,\eta(\mathrm{d}y).
\]
Combining the two bounds after using the elementary inequality
$\lvert x+y\rvert^p \le 2^{p-1}(\lvert x\rvert^p+\lvert y\rvert^p)$,
we conclude that
\[
\mathbb{E}\!\left[
\left|\pi_Y(f)-\pi_Y^{N}(f)\right|^p
\right]
\le
\frac{B_p\|f\|_\infty^p}{N^{p/2}}
\int_{\mathcal{Y}}
\|\ell^{\,d_x}_y\|_{L^2(\pi_0)}^p
\,\eta(\mathrm{d}y).
\]
We now prove $(i)\Rightarrow(ii)$.

Assume that inequality~\eqref{eq:Iff_error} holds for all $f\in B(\mX)$.
Observe first that
\begin{align*}
(\pi_Y(f)-\pi_Y^{N}(f))^2
=
\frac{1}{\pi_0(l_Y^{d_x})^2}
\Big[
&\big(\pi_0(f\,l_Y^{d_x})-\pi_0^N(f\,l_Y^{d_x})\big)^2 \notag\\
&+2\,\pi_Y^{N}(f)
\big(\pi_0(f\,l_Y^{d_x})-\pi_0^N(f\,l_Y^{d_x})\big)
\big(\pi_0^N(l_Y^{d_x})-\pi_0(l_Y^{d_x})\big) \notag\\
&+\big(\pi_Y^{N}(f)\big)^2
\big(\pi_0^N(l_Y^{d_x})-\pi_0(l_Y^{d_x})\big)^2
\Big].
\end{align*}
Taking expectation and using the independence between $Y$ and the Monte
Carlo samples, a necessary condition for
\[
\mathbb{E}\big[(\pi_Y(f)-\pi_Y^{N}(f))^2\big]<\infty
\]
is that
\begin{equation}\label{eq:71}
\int_{\mathcal{Y}}
\frac{1}{\pi_0(l_y^{d_x})^2}
\mathbb{E}\!\left[
\big(\pi_Y^{N}(f)\big)^2
\big(\pi_0^N(l_y^{d_x})-\pi_0(l_y^{d_x})\big)^2
\right]
\eta(\mathrm{d}y)
<\infty .
\end{equation}
Choose $f$ positive and bounded away from zero, i.e.\ there exists a constant $\epsilon>0$
such that  $f(x)\ge \epsilon$ for all $x\in\mX$. Then Eq. \eqref{eq:71} yields

\begin{equation}\label{eq:MbF}
\int_{\mathcal{Y}}
\frac{1}{\pi_0(l_y^{d_x})^2}
\mathbb{E}\!\left[
\big(\pi_0^N(l_y^{d_x})-\pi_0(l_y^{d_x})\big)^2
\right]
\eta(\mathrm{d}y)
<\infty .
\end{equation}
For fixed $y$, define the iid\ zero-mean r.v.s
\[
U_i:=\frac{1}{N}\big(\pi_0(l_y^{d_x})-l_y^{d_x}(x^i)\big),
\qquad i=1,\dots,N,
\]
so that
\[
\sum_{i=1}^N U_i=\pi_0(l_y^{d_x})-\pi_0^N(l_y^{d_x}).
\]
Applying the lower bound in the Marcinkiewicz--Zygmund inequality yields
\[
A_2\,\mathbb{E}\!\left[\sum_{i=1}^N U_i^2\right]
\le
\mathbb{E}\!\left[\left|\sum_{i=1}^N U_i\right|^2\right],
\]
where $A_2>0$ only depends on $p=2$. Since
\[
\mathbb{E}\!\left[\sum_{i=1}^N U_i^2\right]
=
\frac{1}{N}
\big(\pi_0([l_y^{d_x}]^2)-\pi_0(l_y^{d_x})^2\big),
\]
we obtain
\[
\frac{A_2}{N}
\big(\pi_0([l_y^{d_x}]^2)-\pi_0(l_y^{d_x})^2\big)
\le
\mathbb{E}\!\left[
\big|\pi_0(l_y^{d_x})-\pi_0^N(l_y^{d_x})\big|^2
\right].
\]
Dividing by \(\pi_0(\ell_y^{\,d_x})^2\), integrating over \(\mathcal{Y}\), and applying \eqref{eq:MbF} along with Remark~\ref{rem:link_norm}, we obtain, in particular,
\[
\int_{\mathcal{Y}}
\|\ell_y^{\,d_x}\|_{L^2(\pi_0)}^2 \,\eta(\mathrm{d}y)
=
\int_{\mathcal{Y}}
\frac{\pi_0([l_y^{d_x}]^2)}{\pi_0(l_y^{d_x})^2}\,\eta(\mathrm{d}y)
<\infty,
\]
which proves that $\ell^{\,d_x}\in L^2(\mathcal{Y};L^2(\pi_0))$.
\qed

\section{Proof of Theorem \ref{thIS2R}}\label{ap:T4.3}
The result follows directly from the sufficiency argument in the proof of  Theorem \ref{thm:thISR} (i.e., (ii) $\Rightarrow$ (i)), which remains valid for both \(p = 1\) and \(p = 2\). Under the assumption that there is a polynomial \(P_{n,p}(d_x)\) bounding \(K_p^{d_x}\), the conclusion is straightforward.
\qed

\section{Numerical example}\label{ap:Numerical_Example}

To illustrate the dimensional behavior predicted by the theoretical bounds developed throughout the paper, we consider a family of linear Gaussian models for which the relevant spectral quantities can be controlled explicitly as the state dimension \(d_x\) varies.

We present two numerical examples, both with fixed observation dimension \(d_y\), but with different spectral scalings. In the first example, the model is normalized so that the quantity
\[
\sigma_1^2(A^{d_x})\lambda_1(\Sigma_x^{d_x})
\]
grows polynomially with respect to \(d_x\). As predicted by the theory, this produces polynomial growth of the Bochner constant \(K_2^{d_x}\) and of the corresponding importance sampling error.

In the second example, the normalization is chosen so that the previous spectral quantity remains uniformly bounded with respect to \(d_x\). In this regime, the theoretical bounds predict that both the Bochner constant and the importance sampling error remain uniformly stable as the state dimension increases.

Together, these examples illustrate how the spectral structure of the model determines the dimensional scaling of the average errors with respect to the observation distribution.

Throughout both examples, we fix
\[
d_y=3,
\qquad
R=I_{d_y},
\]
and, for each
\[
d_x \in \{5,10,20,50,10^2,5\cdot10^2,10^3,10^4\},
\]
we generate random matrices
we generate random matrices
\[
\widetilde A^{d_x} \in \mbR^{d_y\times d_x},
\qquad
\widetilde B^{d_x} \in \mbR^{d_x\times d_x},
\]
whose entries are independent and uniformly distributed in $(0,1)$. Since $\sigma_1(\widetilde A^{d_x})=\|\widetilde A^{d_x}\|_2 $, this assumption is enough to ensure that $\sigma_1(\widetilde A^{d_x}) > 0$. We then define
\[
\widetilde \Sigma_x^{d_x}
: =
\widetilde B^{d_x}(\widetilde B^{d_x})^\top
+
\eta I_{d_x},
~~\text{with}~~
\eta=10^{-2}.
\]
Thus, $\widetilde \Sigma_x^{d_x}$ is symmetric positive definite. Indeed, for every \(x\neq 0\),
\[
x^\top \widetilde \Sigma_x^{d_x} x
=
x^\top \widetilde B^{d_x} (\widetilde B^{d_x})^\top x 
+
\eta x^\top x
=
\| (\widetilde B^{d_x})^\top x\|^2
+
\eta \|x\|^2.
\]
Since
\(
\eta \|x\|^2 > 0,
\)
we obtain
\[
x^\top \widetilde \Sigma_x^{d_x} x > 0,
\qquad
\forall x\neq 0.
\]
Hence,
\(
\widetilde \Sigma_x^{d_x}\succ 0.
\)
\subsection{Polynomial scaling}
The model matrices are obtained through the spectral normalizations
\beq\label{eq:Control_A_Sigmax}
A^{d_x}
=
d_x^{1/4}
\frac{\widetilde A^{d_x}}{\sigma_1(\widetilde A^{d_x})},
\qquad
\Sigma_x^{d_x}
=
d_x^{1/2}
\frac{\widetilde \Sigma_x^{d_x}}
{\lambda_1(\widetilde \Sigma_x^{d_x})}.
\eeq
By \eqref{eq:Control_A_Sigmax} and the homogeneity of singular values and eigenvalues, namely
\[
\sigma_1(cM)=c\,\sigma_1(M),
\qquad
\lambda_1(cM)=c\,\lambda_1(M),
\qquad c>0,
\]
we obtain the exact identities
\[
\sigma_1(A^{d_x})=d_x^{1/4},
\qquad
\lambda_1(\Sigma_x^{d_x})=d_x^{1/2}.
\]
Consequently,
\[
\sigma_1^2(A^{d_x})\lambda_1(\Sigma_x^{d_x})
=
d_x.
\]
This quantity is the key mechanism behind the polynomial behavior observed below. Indeed, applying the upper bound in \eqref{K_2Up_Bound} to
\[
K_2^{d_x}
\leq \left(
1+
\frac{
\lambda_{1}(\Sigma_x^{d_x})
}{
\lambda_{d_y}(R)
}
\sigma_1^2(A^{d_x})
\right)^{d_y}\]
yields
\beq\label{ineq:Sigma_y_pol}
K_2^{d_x}
\le 
(1+d_x)^{d_y}=:P(d_x).
\eeq
For each value of $d_x$, we choose a prior mean
\[
\mu_x^{d_x}\in \mbR^{d_x},
\]
which is generated for the simulation as a sample from a standard $d_x$-dimensional Gaussian distribution. We then sample a  state
\[
x^\star \sim \mathcal{N}(\mu_x^{d_x},\Sigma_x^{d_x}),
\]
and generate one observation
\[
y=A^{d_x}x^\star+v,
\qquad
v\sim \mathcal{N}(0,I_{d_y}).
\]

Since the model is linear and Gaussian, the posterior distribution of $X$ is available in closed form. In particular, the posterior mean is
\[
m_y^{d_x}
=
\mu_x^{d_x}
+
\Sigma_x^{d_x}[A^{d_x}]^\top
\left(
A^{d_x}\Sigma_x^{d_x}[A^{d_x}]^\top+I_{d_y}
\right)^{-1}
\left(
y-A^{d_x}\mu_x^{d_x}
\right).
\]
We compare this exact posterior mean with an importance sampling approximation with $N=400$ particles sampled from the prior,
\[
x^1,\ldots,x^N
\sim
\mathcal{N}(\mu_x^{d_x},\Sigma_x^{d_x}).
\]
For the realized observation $y$, the normalized weights are
\[
w_i(y)
=
\frac{
\exp\left\{
-\frac12 \|y-A^{d_x}x^i\|^2
\right\}
}{
\sum_{j=1}^N
\exp\left\{
-\frac12 \|y-A^{d_x}x^j\|^2
\right\}
},
\qquad i=1,\ldots,N,
\]
and the corresponding importance sampling approximation of the posterior mean is
\[
m_{y,N}^{d_x}
=
\sum_{i=1}^N w_i(y)x^i.
\]
For each dimension $d_x$, we compute the Euclidean error
\[
\mE_N^{d_x} := \|m_{y,N}^{d_x}-m_y^{d_x}\|_2,
\]
as well as the Bochner constant, which by Eq.~\eqref{eq:Boch_Det} takes the form
\[
K_2^{d_x}
=
\frac{
\abs{
A^{d_x}\Sigma_x^{d_x}[A^{d_x}]^\top+I_{d_y}
}
}{
\abs{I_{d_y}}
}
=
\abs{
A^{d_x}\Sigma_x^{d_x}[A^{d_x}]^\top+I_{d_y}
}.
\]
Recall that, by   \eqref{ineq:Sigma_y_pol}
\[
K_2^{d_x}
\le
(1+d_x)^{d_y},
\]
and therefore the theoretical upper bound grows almost polynomially. Recall that, in this example, the observation dimension is fixed and equal to \(d_y=3\).

Figure \ref{F1} compares $K_2^{d_x}$, the importance sampling error $\mE_N^{d_x}$, the reference polynomial $P(d_x)=(1+d_x)^{3}$, and the exponential reference $e^{d_x}$. The exponential curve is displayed only for $d_x\leq 50$, since it grows much faster than the polynomial quantities.

\begin{figure}[htb]
\centering
\includegraphics[width=0.75\textwidth]{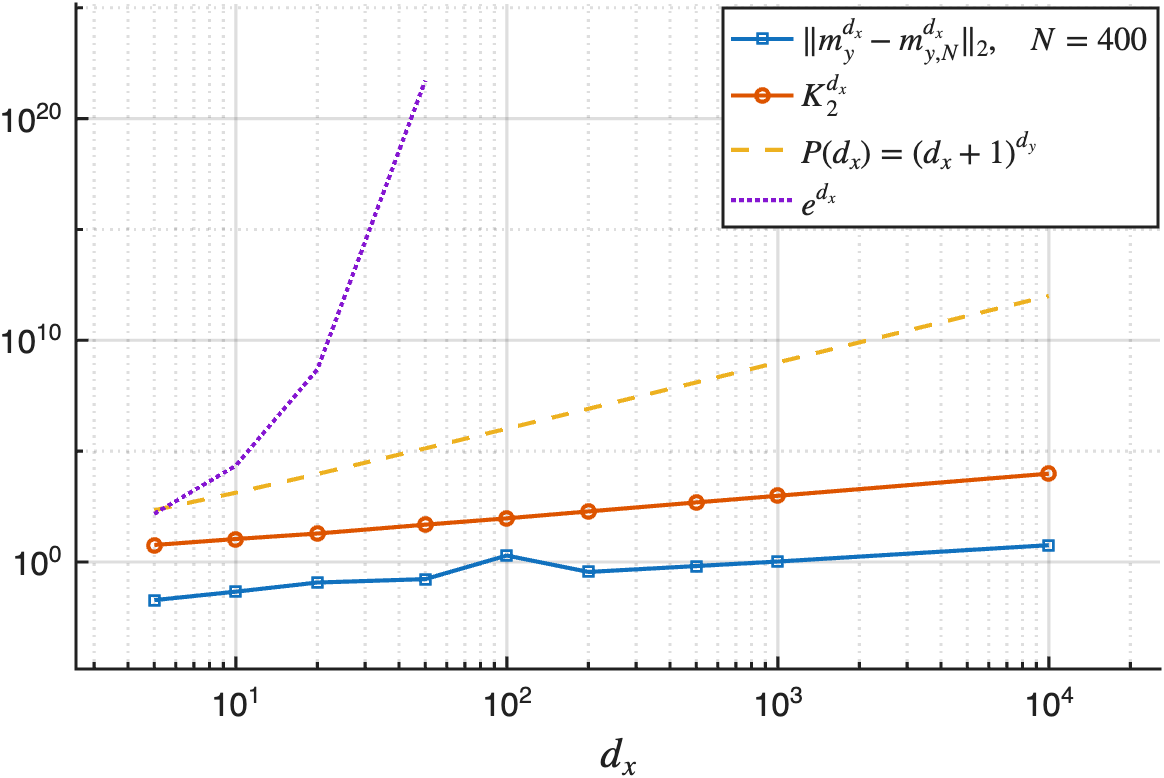}
\caption{The Bochner constant $K_2^{d_x}$ and the importance sampling error $\mE_N^{d_x}=\|m_{y,N}^{d_x}-m_y^{d_x}\|_2$ as a function of the state dimension $d_x$, for $d_y=3$ and $N=400$. The polynomial reference predicted $P(d_x)=(1+d_x)^{d_y}$ and the exponential reference $e^{d_x}$ are shown for comparison. The results illustrate the polynomial growth predicted by the theoretical bounds.}\label{F1}
\end{figure}

\subsection{Uniform bound}
If instead of allowing the spectral quantities to grow with $d_x$, we impose uniform bounds of the form
\[
\sigma_1(A^{d_x}) \leq C_A,
\qquad
\lambda_1(\Sigma_x^{d_x}) \leq C_\Sigma,
\qquad \text{for all } d_x,
\]
for some constants $C_A, C_\Sigma > 0$ independent of $d_x$, which can be achieved by a straightforward modification of the normalization in \eqref{eq:Control_A_Sigmax}, then the key quantity governing the growth satisfies
\[
\sigma_1^2(A^{d_x}) \lambda_1(\Sigma_x^{d_x}) \leq C_A^2 C_\Sigma.
\]
Consequently, the determinant bound in \eqref{K_2Up_Bound} yields
\[
K_2^{d_x}
\leq
\left(
C_A^2 C_\Sigma + 1
\right)^{d_y}:=\mK_2,
\] therefore, the Bochner constant remains uniformly bounded w.r.t. $d_x$ 
and the corresponding error of the importance sampling approximation, $\mE_N^{d_x}$, remains uniformly bounded as well. This behavior is illustrated in Figure~\ref{fig:LinearGaussUniform}, where both quantities remain stable as $d_x$ increases, in contrast with the polynomial growth observed in Figure \ref{F1}.

\begin{figure}[htb]
\centering
\includegraphics[width=0.75\textwidth]{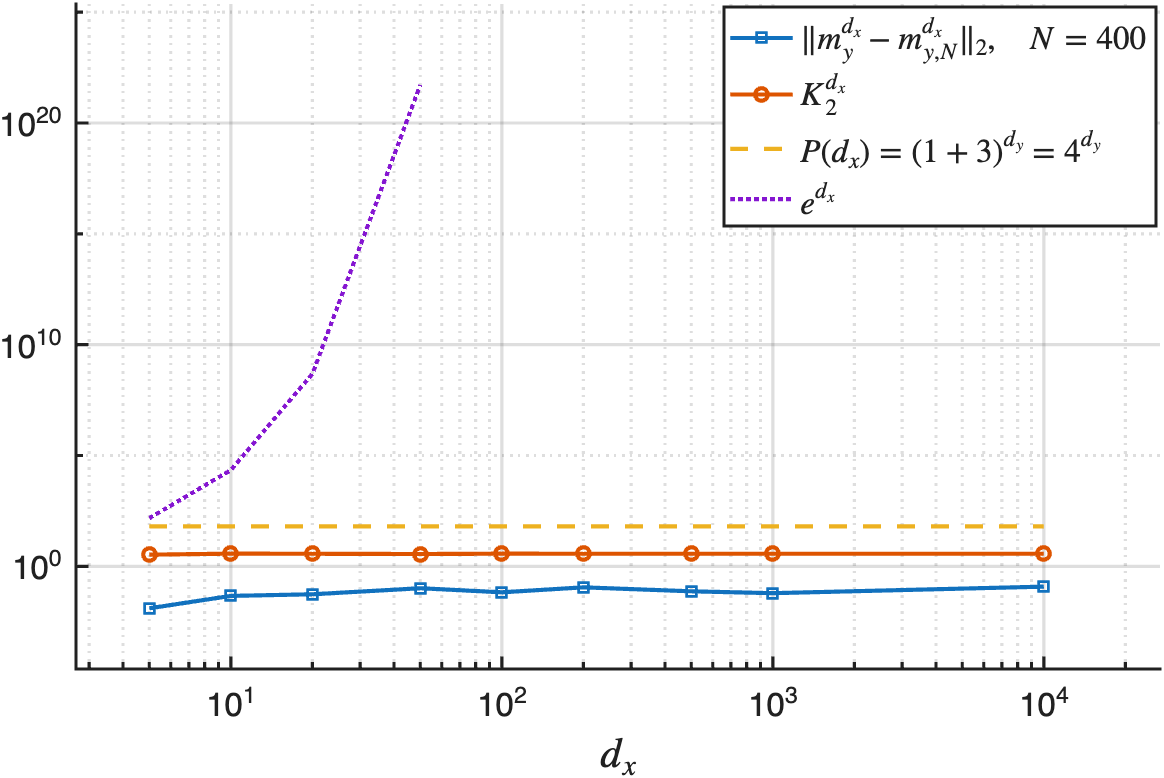}
\caption{The Bochner constant $K_2^{d_x}$ and the importance sampling error $\mE_N^{d_x}=\|m_{y,N}^{d_x}-m_y^{d_x}\|_2$ as a function of the state dimension $d_x$, for $d_y=3$ and $N=400$. The uniform theoretical bound $\mathcal K_2=(1+3)^{d_y}$ and the exponential reference $e^{d_x}$ are shown for comparison. The results illustrate the uniform behavior predicted by the theoretical bounds, where both the Bochner constant and the importance sampling error remain stable as the state dimension increases.}
\label{fig:LinearGaussUniform}
\end{figure}

\section{Elliptically symmetric likelihood families}\label{app:Colours_Noise}

\subsection{Mahalanobis-distance geometry}\label{sub:Mahalanobis-geo}

Throughout this section, geometric arguments are conducted w.r.t. the Mahalanobis norm $\|\cdot\|_R$. We assume the observation map $h^{d_x}: \mX\subseteq \mbR^{d_x} \to \mathcal{Y}$ is uniformly bounded in the Euclidean norm; specifically, there exists a $d_x$-dependent constant 
\[
M(d_x) := \sup_{x \in \mX} \|h^{d_x}(x)\| < \infty.
\]
Let $R^{1/2}$ denote the unique symmetric positive definite square root of $R$, such that  $\|y\|_R = \|R^{1/2} y\|$. If $\lambda_{1}(R)$ denotes the largest eigenvalue of $R$, the induced operator 2-norm of $R^{1/2}$ is $\|R^{1/2}\| = \sqrt{\lambda_{1}(R)}$. It follows from the submultiplicativity of the norm that
\[
\|h^{d_x}(x)\|_R = \|R^{1/2} h^{d_x}(x)\| \le \sqrt{\lambda_{1}(R)} \, \|h^{d_x}(x)\| \le \sqrt{\lambda_{1}(R)} M(d_x).
\]
Accordingly, we define the uniform bound in the $R$-norm as
\[
M_R^{d_x} := \sup_{x \in \mX} \|h^{d_x}(x)\|_R \le \sqrt{\lambda_{1}(R)} M(d_x).
\]
By the triangle inequality, for all $x \in \mX$ and $y \in \mathcal{Y}$, we have
\begin{equation}\label{eq:triangle-R}
\|y - h^{d_x}(x)\|_R 
\le \|y\|_R + M_R^{d_x}.
\end{equation}

\emph{Propagation through $\psi$ and $\phi$.}
Let $\psi: [0,\infty) \to \mathbb{R}$ be non-decreasing and $\phi: \mathbb{R} \to (0,\infty)$ be non-increasing. Applying $\psi$ to the inequality in \eqref{eq:triangle-R} yields a bound that is uniform in $x \in \mathcal{X}$
\begin{equation*}
\psi\big(\|y - h^{d_x}(x)\|_R\big) \le \psi\big(\|y\|_R + M_R^{d_x}\big).
\end{equation*}
Since $\phi$ is non-increasing, the inequality direction reverses, providing the lower bound
\begin{equation}\label{eq:final-general-bound-R}
\phi\big(\psi(\|y\|_R + M_R^{d_x})\big) 
\le \pi_0 \left( \phi\big(\psi(\|y - h^{d_x}(x)\|_R)\big) \right) = \pi_0 (g_y^{d_x}).
\end{equation}
The bound in \eqref{eq:final-general-bound-R} establishes that the normalization constant of any elliptically symmetric  likelihood of the form \eqref{eq:general-radial-likelihood-fixed} is bounded below by a radial profile depending solely on the Mahalanobis norm of $y$ and the uniform bound $M_R^{d_x}$.

\subsection{Proof of Theorem \ref{thm:Colour_Noises}}\label{sec:thm:Colour_Noises}

The squared likelihood for the elliptically symmetric  family is expressed as
\begin{equation*}
g(y \mid x)^2 = \phi^2 \big( \psi(\|y - h^{d_x}(x)\|_R) \big).
\end{equation*}
From the general bounds established in \eqref{eq:ratio} and \eqref{eq:final-general-bound-R}, the second moment of the link function satisfies
\begin{equation}
\label{eq:int_ratio_rewritten}
\mathbb{E} \left[ \| \ell_Y^{\, d_x} \|_{L^2(\pi_0)}^2 \right] \le \int_{\mX} \left(\int_{\mathcal{Y}} \frac{g_y^{d_x}(x)^2}{\phi \big( \psi(\|y\|_R + M_R^{d_x}) \big)} \, \mathrm{d}y \right) \, \pi_0(\mathrm{d}x).
\end{equation}
Applying the change of variables $\tilde{y} = y - h^{d_x}(x)$ to the inner integral, we obtain
\begin{equation*}
I(x) :=\int_{\mathcal{Y}} \frac{\phi^2 \big( \psi(\|\tilde{y}\|_R) \big)}{\phi \big( \psi(\|\tilde{y} + h^{d_x}(x)\|_R + M_R^{d_x}) \big)} \, \mathrm{d}\tilde{y}.
\end{equation*}
By the triangle inequality for the Mahalanobis norm and the definition of $M_R^{d_x}$ in \eqref{eq:MR-def}, we have
\[
\|\tilde{y} + h^{d_x}(x)\|_R + M_R^{d_x}  
\le \|\tilde{y}\|_R + 2M_R^{d_x}.
\]
Recall that $\psi$ is non-decreasing and $\phi$ is non-increasing. Consequently, the denominator in the integrand is lower bounded by
\[
\phi \big( \psi(\|\tilde{y} + h^{d_x}(x)\|_R + M_R^{d_x}) \big) \ge \phi \big( \psi(\|\tilde{y}\|_R + 2M_R^{d_x}) \big).
\]
This yields a uniform bound for the integrand that is independent of $x \in \mX$
\begin{equation*}
\frac{\phi^2 \big( \psi(\|\tilde{y}\|_R) \big)}{\phi \big( \psi(\|\tilde{y} + h^{d_x}(x)\|_R + M_R^{d_x}) \big)} \le \frac{\phi^2 \big( \psi(\|\tilde{y}\|_R) \big)}{\phi \big( \psi(\|\tilde{y}\|_R + 2M_R^{d_x}) \big)}.
\end{equation*}
Defining the map
\begin{equation}\label{eq:radial_k}
k(y) := \frac{\phi^2 \big( \psi(\|y\|_R) \big)}{\phi \big( \psi(\|y\|_R + 2M_R^{d_x}) \big)},
\end{equation}
and substituting the uniform bound \eqref{eq:radial_k} into \eqref{eq:int_ratio_rewritten}, the second moment of the link function simplifies to 
\begin{equation*}
\mathbb{E} \left[ \| \ell_Y^{\, d_x} \|_{L^2(\pi_0)}^2 \right] \le \int_{\mX} \left( \int_{\mathcal{Y}} k(y) \, \mathrm{d}y \right) \pi_0(\mathrm{d}x).
\end{equation*}
Since the inner integral is independent of $x$ and $\pi_0$ is a probability measure on $\mX$, the expression reduces to 
\begin{equation*}
\mathbb{E} \left[ \| \ell_Y^{\, d_x} \|_{L^2(\pi_0)}^2 \right] \le \int_{\mathcal{Y}} k(y) \, \mathrm{d}y.
\end{equation*}

To evaluate this, we transform to radial coordinates $r = \|y\|_R$. The Lebesgue measure on $\mathcal{Y}$ decomposes as $\mathrm{d}y = \mathcal{S}_R^{d_y} r^{d_y-1} \mathrm{d}r$, where $\mathcal{S}_R^{d_y} = |R|^{1/2} \frac{2\pi^{d_y/2}}{\Gamma(d_y/2)}$ denotes the weighted surface area of the $d_y$--unit sphere. It follows that
\begin{equation}
\label{eq:radial_condition_final}
\int_{\mathcal{Y}} k(y) \, \mathrm{d}y = \mathcal{S}_R^{d_y} \int_0^\infty \frac{\phi^2 \big( \psi(r) \big)}{\phi \big( \psi(r + 2M_R^{d_x}) \big)} \, r^{d_y-1} \, \mathrm{d}r.
\end{equation}
Thus, the $L^2$-integrability of the link function is guaranteed if the one-dimensional radial integral in \eqref{eq:radial_condition_final} is finite. This condition is determined solely by the tail behavior of the profile $\phi \circ \psi$, the observation dimension $d_y$, and the observation map bound $M_R^{d_x}$. \qed

\subsection{Proof of Corollary \ref{cor:Colour_Noises}}\label{sec:cor:Colour_Noises}

The integrability of the link function is governed by the radial integral defined in~\eqref{eq:radial_condition_final}. 
We analyze separately the two classes of radial profiles described in Tables~A and~B of Remark~\ref{rem:Colour_noises}.

\subsubsection{Case 1: Exponential radial profiles}
Let $\phi(s) = \mathrm{C}\, e^{-s}$ and $\psi(r) = a r^\beta$ for constants $\mathrm{C}, a, \beta > 0$. The radial integral in \eqref{eq:radial_condition_final} then takes the form
\begin{equation*}
\mathcal{J} := \mathrm{C} \,\mathcal{S}_R^{d_y} \int_0^\infty \exp \left( \psi(r + 2M_R^{d_x}) - 2\psi(r) \right) r^{d_y-1} \, \mathrm{d}r.
\end{equation*}
The integrability is determined by the asymptotic behavior of the exponent \[Q(r) :=a[(r + 2M_R^{d_x})^\beta - 2r^\beta].\] Factoring out the dominant term, we obtain
\[
Q(r) = a r^\beta \left[ \left( 1 + \frac{2M_R^{d_x}}{r} \right)^\beta - 2 \right].
\]
Since $\lim_{r \to \infty} (1 + 2M_R^{d_x}/r)^\beta = 1$, for any $\varepsilon \in (0, 1)$ there exists a radius $R_\varepsilon$ such that  for all $r \ge R_\varepsilon$, $Q(r) \le -a(1 - \varepsilon)r^\beta$. Setting $\varepsilon = 1/2$ yields $Q(r) \le -\frac{a}{2}r^\beta$ for $r \ge 4M_R^{d_x}$. We decompose the integral $\mathcal{J}$ into a finite part $\mathcal{J}_1$ and a tail part $\mathcal{J}_2$:
\begin{enumerate}
    \item \textit{Finite part:} For $r \in [0, 4M_R^{d_x}]$, the exponent is bounded by $Q(r) \le a(6M_R^{d_x})^\beta$. Thus,
\begin{equation*}
    \mathcal{J}_1 \le \mathrm{C}\,  \mathcal{S}_R^{d_y} \exp\left( a(6M_R^{d_x})^\beta \right) \frac{(4M_R^{d_x})^{d_y}}{d_y}.
    \end{equation*}
    \item \textit{Tail part:} For $r > 4M_R^{d_x}$, the integrand is bounded by $\exp(-br^\beta)r^{d_y-1}$ with $b = a/2$. The change of variables $t = br^\beta$ yields
\begin{equation*}
    \mathcal{J}_2 \le \frac{\mathrm{C}\, \mathcal{S}_R^{d_y}}{\beta b^{d_y/\beta}} \Gamma\left( \frac{d_y}{\beta}, b(4M_R^{d_x})^\beta \right),
    \end{equation*}
    where $\Gamma(s, x)$ is the upper incomplete Gamma function.
\end{enumerate}
The sum $\mathcal{J}_1 + \mathcal{J}_2$ provides a finite bound $K_2^{d_x}$. If $\sup_{d_x\in \mbN} M_R^{d_x} \le M$, the bound is independent of $d_x$.
\subsubsection{Case 2: Polynomial-tail profiles}
Let $\phi(s) = \mathrm{C}\,  s^{-\alpha}$ and $\psi(r) = 1 + ar^2$ for $\alpha, a > 0$. The integral \eqref{eq:radial_condition_final} becomes
\begin{equation}
\label{eq:I_poly_ratio}
\mathcal{J} = \mathrm{C}\, \mathcal{S}_R^{d_y} \int_0^\infty \left( \frac{1 + a(r + 2M_R^{d_x})^2}{(1 + ar^2)^2} \right)^\alpha r^{d_y-1} \, \mathrm{d}r.
\end{equation}
Using the inequality $(r+k)^2 \le 2(r^2+k^2)$, the numerator satisfies
\[
1 + a(r + 2M_R^{d_x})^2 \le 2(1 + a(2M_R^{d_x})^2)(1 + ar^2).
\]
Substituting this into \eqref{eq:I_poly_ratio}, we obtain
\[
\mathcal{J} \le 2^\alpha (1 + a(2M_R^{d_x})^2)^\alpha \, \mathrm{C}\,  \mathcal{S}_R^{d_y} \int_0^\infty (1 + ar^2)^{-\alpha} r^{d_y-1} \, \mathrm{d}r.
\]
 This integral converges for $\alpha > d_y/2$ and evaluates to a constant. Indeed, 
by the normalization of the likelihood, $\mathrm{C}\,  \mathcal{S}_R^{d_y} \int_0^\infty (1 + ar^2)^{-\alpha} r^{d_y-1} \, \mathrm{d}r = 1$. Thus, the second moment is bounded by
\begin{equation*}
\mathbb{E}\left[ \| \ell_Y^{\, d_x} \|_{L^2(\pi_0)}^2 \right] = K_2^{d_x} \le 2^\alpha (1 + a(2M_R^{d_x})^2)^\alpha,
\end{equation*}
completing the proof.
Finally, note that if $M_R^{d_x} \le P(d_x)$ for some polynomial $P$ in $d_x$, then $K_2^{d_x}$ satisfies~\eqref{eq:K_p_leq_P}. \hfill \qed

\end{appendix}


\begin{acks}[Acknowledgments]

\end{acks}

\begin{funding}
FG and JM acknowledge the support of Spain’s {\em Agencia Estatal de Investigación} (ref. PID2024-158181NB-I00 NISA), funded by MCIN/AEI/10.13039/501100011033 and by the ERDF (“A way of making Europe”); {\em Comunidad de Madrid} (IDEA-CM project, ref. TEC-2024/COM-89), and the Office of Naval Research (award N00014-22-1-2647). The work of VE is supported by ARL/ARO under grant W911NF-22-1-0235 and by the Advanced Research + Invention Agency (ARIA).
\end{funding}

\bibliographystyle{imsart-number} 

\bibliography{bibliografia}       

\end{document}